\newtheorem{lemma}{Lemma}
\newtheorem*{lemma*}{Lemma}
\newtheorem{prop}{Proposition}
\newtheorem{theorem}{Theorem}
\newtheorem*{theorem*}{Theorem}
\theoremstyle{definition}
\newcommand\dotminus{\mathbin{\dot{-}}}
\newcommand\dotoplus{\mathbin{\dot{\oplus}}}
\DeclareMathOperator\elem{E}
\DeclareMathOperator\stmap{st}
\DeclareMathOperator\glin{GL}
\DeclareMathOperator\stlin{St}
\DeclareMathOperator\gstlin{GSt}
\DeclareMathOperator\Cent{C}
\DeclareMathOperator\Ker{Ker}
\newcommand\e{{\mathrm e}}
\newcommand\R{{\mathcal R}}
\newcommand\leqt{\trianglelefteq}
\newcommand\op{{\mathrm{op}}}
\newcommand\id{{\mathrm{id}}}
\DeclareMathOperator\Image{Im}
\DeclareMathOperator\Spec{Spec}
\newcommand\fp{{\mathrm{fp}}}
\DeclareMathOperator\ev{ev}
\DeclareMathOperator\colim{colim}
\newcommand{\up}[2]{{^{#1}\!{#2}}}
\newcommand{\Set}{\mathbf{Set}}
\newcommand{\Group}{\mathbf{Grp}}
\newcommand{\Ring}{\mathbf{Ring}}
\DeclareMathOperator\Pro{Pro}
\DeclareMathOperator\Ind{Ind}
\DeclareMathOperator\Ex{Ex}
\DeclareMathOperator\Sub{Sub}
\newcommand{\Cat}{\mathbf{Cat}}
\title{
	Locally isotropic Steinberg groups I. \\
	Centrality of the
	\(
		\mathrm K_2
	\)-functor
}
\author{
	Egor Voronetsky\thanks{
		Research is supported by the Russian Science Foundation grant 19-71-30002.
	} \\
	Chebyshev Laboratory, \\
	St. Petersburg State University, \\
	14th Line V.O., 29B, \\
	Saint Petersburg 199178 Russia \\
}
\begin{document}
\maketitle

\begin{abstract}
	We begin to study Steinberg groups associated with a locally isotropic reductive group \(G\) over a arbitrary ring. We propose a construction of such a Steinberg group functor as a group object in a certain completion of the category of presheaves. We also show that it is a crossed module over \(G\) in a unique way, in particular, that the
	\(
		\mathrm K_2
	\)-functor is central. If \(G\) is globally isotropic in a suitable sense, then the Steinberg group functor exists as an ordinary group-valued functor and all such abstract Steinberg groups are crossed modules over the groups of points of \(G\).
\end{abstract}


\section{Introduction}

In recent paper \cite{isotropic-elem} we initiated the study of point groups of locally isotropic reductive groups, i.e. the groups
\(
	G(K)
\), where \(K\) is a unital commutative ring, \(G\) is a reductive group scheme over \(K\), and
\(
	G_{\mathfrak p}
\) is sufficiently isotropic for every prime ideal
\(
	\mathfrak p \in \Spec(K)
\). Unlike the previous results in this direction (see e.g. \cite{iso-centralizer, iso-perf, iso-elem-norm, iso-a1-inv, iso-norm-str, iso-survey}) we do not require the existence of proper parabolic subgroups of \(G\) itself, i.e. there are no global isotropic conditions. A typical example of such \(G\) is the automorphism group of a finitely generated projective module
\(
	P_K
\) such that the rank of \(P\) at every prime ideal is sufficiently large.

In that paper we proposed a general definition of the elementary subgroup
\(
	\elem_G(K) \leq G(K)
\) if the local isotropic rank of \(G\) is at least \(2\) and proved that it is both normal and perfect (excluding the case of rank \(2\) and small residue fields). Moreover, it turned out that the functor
\(
	\elem_G({-})
\) on the category of \(K\)-algebras is ``scheme generated'', i.e. there exists a scheme morphism
\(
	T \colon \mathbb A^N_K \to G
\) such that
\[
	\elem_G(R) = \bigl\langle T(R^N) \bigr\rangle
\]
for all unital commutative \(K\)-algebras \(R\).

The elementary subgroup is important for at least two reasons. On the one hand, it plays the central role in the group structure of
\(
	G(K)
\). Namely, under some assumptions (i.e. for
\(
	G = \mathrm{GL}_n
\),
\(
	n \geq 3
\)) it is a perfect normal subgroup, and if \(K\) is in addition finite dimensional, then
\(
	G(K) / \elem_G(K)
\) is solvable. Moreover, there exists a nice description of all subgroups of
\(
	G(K)
\) normalized by
\(
	\elem_G(K)
\) (including all normal subgroups) in terms of ideals of \(K\) and relative elementary subgroups. On the other hand,
\[
	\mathrm K_1^G(K) = G(K) / \elem_G(K)
\]
is the unstable (and non-split in general) analogue of the \(
	\mathrm K_1
\)-functor from algebraic
\(
	\mathrm K
\)-theory, an important homological invariant of the ring \(K\).

In the case
\(
	G = \mathrm{GL}_n
\),
\(
	n \geq 3
\) there exists a corresponding group ``at the level of
\(
	\mathrm K_2
\)'', namely, the linear Steinberg group
\(
	\stlin_G(K) = \stlin(n, K)
\). As a group it is given by explicit generators and relations, it is perfect, and there is a canonical homomorphism
\[
	\stmap \colon \stlin_G(K) \to G(K)
\]
with the image
\(
	\elem_G(K)
\). The kernel
\[
	\mathrm K_2^G(K) = \Ker(\stmap)
\]
is the unstable
\(
	\mathrm K_2
\)-functor. It turns out that
\(
	\mathrm K_2^G(K) \leq \stlin_G(K)
\) is central for
\(
	n \geq 4
\), moreover,
\(
	\stmap \colon \stlin_G(K) \to G(K)
\) is a crossed module in a unique way. If
\(
	n \geq 5
\), then
\(
	\stmap \colon \stlin_G(K) \to \elem_G(K)
\) is even the universal central extension, so
\(
	\mathrm K_2^G(K)
\) is the Schur multiplier of the elementary subgroup. In the case
\(
	n = 4
\) the Steinberg group is not necessarily centrally closed, but its Schur multiplier is known.

More generally, the Steinberg group
\(
	\stlin_G(K)
\) is easy to define for any Chevalley group scheme of rank at least \(3\), i.e. a split reductive group. It is perfect (essentially by definition) and a crossed module over
\(
	G(K)
\) by \cite{c-k2, d-k2, e-k2, lin-k2-vdk, unit-k2, thesis}. If the rank is sufficiently large, then
\(
	\stlin_G(K)
\) is centrally closed \cite{centr-closed}, and in any case its Schur multipliers is computed in \cite{schur-mult}.

For non-split isotropic reductive groups the only known results on centrality of
\(
	\mathrm K_2
\)-functor concern either classical groups \cite{lin-k2-tulenbaev, unit-k2, lin-k2, thesis} or groups over local rings \cite{boge, iso-congr-ker}.

In this paper we begin to study Steinberg groups corresponding to arbitrary locally isotropic reductive groups (more precisely, reductive groups of the local isotropic rank at least \(3\)). As in \cite{isotropic-elem} we first construct the values
\(
	\stlin_G
\) over some non-unital (but idempotent and even firm) \(K\)-algebras
\(
	K^{(s^\infty)} \in \Pro(\Set)
\) called co-localizations, where
\(
	s \in K
\) is sufficiently large in the sense of divisibility (so that
\(
	G_{K_s}
\) is globally isotropic). Note that these objects are not abstract rings, instead they are pro-sets with a ring structure. It is hard to work with groups given by generators and relations inside
\(
	\Pro(\Set)
\), so the Steinberg group
\(
	\stlin_G(K^{(s^\infty)})
\) is constructed as a group object in the exact-ind-completion
\(
	\Ex(\Ind(\Pro(\Set)))
\) of the category of pro-sets, see \S \ref{categorical-logic} for the definitions.

Moreover, we are interested in functorial properties of these constructions and we also heavily use the universal element method in proposition \ref{local-action}. Thus instead of \(\Set\) we use the category of presheaves
\[
	\mathbf P_K = \Cat(\Ring_K^\fp, \Set)
\]
on the category of affine \(K\)-schemes of finite presentation. Here the finitely presentation condition is needed to avoid set-theoretic difficulties, so all our categories are either ZFC-sets or proper classes consisting only of sets. For example, the base group object is
\(
	G \in \mathbf P_K
\) as a scheme (i.e. a representable presheaf), not
\(
	G(K)
\). The constructed co-local Steinberg groups are
\[
	\stlin_G(\R^{(s^\infty)})
	\in \Ex(\Ind(\Pro(\mathbf P_K))),
\]
where
\(
	\R = \mathbb A^1_K
\) is the scheme representing the forgetful functor
\[
	\Ring_K^\fp \to \Set,\, R \mapsto R.
\]

It turns out that the co-local Steinberg groups are crossed modules over
\(
	G(\mathcal R) = G
\), we prove this in the same way as in \cite{unit-k2, lin-k2, thesis} using ``root elimination'' (i.e. that the Steinberg group is independent on the choice of root subgroups) and the cosheaf property of
\(
	\stlin_G(\R^{(s^\infty)})
\) following \cite{cosheaves}. The whole Steinberg group functor
\(
	\stlin_G(\R)
\) is then constructed by gluing
\(
	\stlin_G(\R^{(s^\infty)})
\) via the cosheaf property, it is a crossed module essentially by definition. Also, it is independent on the elements \(s\), i.e. the Zariski covering, and the root subgroups of
\(
	G_{K_s}
\).

Unfortunately,
\[
	\stlin_G(\R) \in \Ex(\Ind(\Pro(\mathbf P_K)))
\]
cannot be considered as an ordinary group sheaf without further (still unproved) results. Namely, if \(G\) is globally isotropic in a certain strong sense (e.g.
\(
	G = \glin(n, A)
\) for
\(
	n \geq 4
\) and an Azumaya algebra \(A\) over \(K\)), then
\[
	\stlin_G(\R) \in \Ex(\Ind(\mathbf P_K))
\]
by an explicit construction with generators and relations. In this case we may apply the ``evaluation'' functor
\[
	\Ex(\Ind(\mathbf P_K)) \to \mathbf P_K
\]
to
\(
	\stlin_G(\R)
\) an obtain the required group functor
\(
	\stlin_G
\). We plan to prove that always
\[
	\stlin_G(\R) \in \Ex(\Ind(\mathbf P_K))
\]
(i.e. if \(G\) is only locally isotropic) in the future. We also plan to prove that the functor
\(
	\stlin_G
\) is ``scheme presented'' by morphisms from affine spaces in a suitable sense.

In this paper we usually assume that the local isotropic rank of \(G\) is at least \(3\). Is is well-known \cite{wendt} that in the rank \(2\) case the
\(
	\mathrm K_2
\)-functor may be not central even for Chevalley groups. On the other hand, if \(K\) is local, then the
\(
	\mathrm K_2
\)-functor is known to be central at least if \(G\) is simply connected, see \cite[theorem 1.3]{iso-congr-ker}. Thus we have the following open problem: is it true that the Steinberg group functor may be defined in the rank \(2\) case if \(K\) is semi-local (or, more generally, an LG-ring following \cite{gille}) and that it is a crossed module over \(G\) in a natural way? It is not clear even whether the crossed module structure is unique since already in the split case the Steinberg group may be not perfect.

The paper is organized as follows. In \S 2 we recall the definition of locally isotropic reductive groups following \cite{isotropic-elem}. The only difference is that now we require that the root subgroups determine a root grading of \(G\), i.e. there are corresponding Weyl elements. The next section 3 contains the necessary notions from category theory, namely, ind-, pro-, and exact completions, as well as the geometric logic inside infinitary positive categories. This machinery is useful to construct group objects by generators and relations and to prove various claims about them in the same way as in the ordinary group theory, see e.g. lemma \ref{gen-rel}.

We give the constructions of
\(
	\mathbf P_K
\), its completion
\[
	\mathbf U_K = \Ex(\Ind(\Pro(\mathbf P_K))),
\]
the ring object \(\R\), its co-localizations
\(
	\R^{(s^\infty)}
\) and formal localizations
\(
	\R_s^{\Ind}
\), and the Steinberg groups
\(
	\stlin_{G, T, \Phi}(\R^{(s^\infty)})
\),
\(
	\stlin_{G, T, \Phi}(\R_s^{\Ind})
\) in \S 4, where
\(
	(T, \Phi)
\) is an isotropic pinning of
\(
	G_{K_s}
\). The whole section \S 5 contains the proof of the root elimination results, namely, lemma \ref{elim-sur} and proposition \ref{elim-bij}. In \S 6 we prove that
\(
	\stlin_{G, T, \Phi}(\R^{(s^\infty)})
\) is perfect and a crossed module over
\(
	G(\R_s^{\Ind})
\) in the unique way (lemma \ref{st-perf} and proposition \ref{local-action}), also these groups satisfy the cosheaf property (lemma \ref{st-cosheaf}).

Finally, in section 7 we show that
\(
	\stlin_{G, T, \Phi}(\R^{(s^\infty)})
\) is independent of the choice of the isotropic pinning
\(
	(T, \Phi)
\) (lemma \ref{st-unique}) and these group objects may be glued together to the group object
\(
	\stlin_G(\R)
\) by the cosheaf property, it is both perfect and a crossed module over
\(
	G(\R)
\) by theorem \ref{global-st}. The ``ordinary'' centrality of the
\(
	\mathrm K_2
\)-functor (i.e. for the set-theoretic Steinberg group) is deduced in theorem \ref{abstr-xmod} under the additional assumption
\(
	\stlin_G(\R) \in \Ex(\Ind(\mathbf P_K))
\), this assumption obviously holds if \(G\) is already has a global isotropic pinning of rank at least \(3\).

\section{Isotropic reductive groups}

\subsection{Reductive group schemes}

In this paper all rings and algebras are commutative and associative, but not necessarily unital. For every unital ring \(K\) we identify \(K\)-schemes with the corresponding functors on the category of \(K\)-algebras and finitely presented affine \(K\)-schemes with the corresponding functors on the category of finitely presented \(K\)-algebras. All root systems in this paper are finite and crystallographic, though possibly non-reduced, i.e. we allow components of type
\(
	\mathsf{BC}_\ell
\).

Take a \textit{reductive group scheme} \(G\) over \(K\) in the sense of \cite[XIX, definition 2.7]{sga3}, i.e. \(G\) is a smooth affine group scheme and its geometric fibers
\(
	G_{\overline{\kappa(\mathfrak p)}}
\) are (connected) reductive groups over algebraically closures of the residue fields in the usual sense for all
\(
	\mathfrak p \in \Spec(K)
\). A \textit{pinning} \cite[XXIII, definition 1.1]{sga3} of \(G\) consists of
\begin{itemize}

	\item
	a maximal torus
	\(
		T \leq G
	\) with a chosen isomorphism
	\(
		T \cong \mathbb G_{\mathrm m}^\ell
	\);

	\item
	a root datum
	\(
		(
			\mathbb Z^\ell,
			\Phi,
			(\mathbb Z^\ell)^\vee,
			\Phi^{\vee}
		)
	\)
	such that \(\Phi\) and \(\Phi^*\) are the sets of roots and coroots of \(G\) with respect to \(T\) (or
	\(
		K = 0
	\)), in particular, roots and coroots are constant functions on
	\(
		\Spec(K)
	\);

	\item
	a basis
	\(
		\Delta \subseteq \Phi
	\);

	\item
	trivializing sections
	\(
		x_\alpha \in \mathfrak g_\alpha
	\) in the root spaces
	\(
		\mathfrak g_\alpha
		\leq \mathfrak g
		= \mathrm{Lie}(G)
	\) for
	\(
		\alpha \in \Delta
	\), so all root spaces are free \(K\)-modules of rank \(1\).

\end{itemize}
The group scheme \(G\) always has a pinning locally in the \'etale topology and any two pinned reductive group schemes with the same root datum are isomorphic. The \textit{type} of \(G\) at a point
\(
	\mathfrak p \in \Spec(K)
\) is the isomorphism class of the root datum of
\(
	G_{\overline{\kappa(\mathfrak p)}}
\), it is locally constant on
\(
	\Spec(K)
\).

If \(K\) is local (or even an LG-ring with connected spectrum by results from \cite{gille}), then \(G\) always has a minimal parabolic subgroup
\(
	P \leq G
\) and any two such subgroups are conjugate by the group
\(
	G(K)
\) of \(K\)-points \cite[XXVI, corollary 5.7(ii)]{sga3}. Moreover, \(G\) has a maximal split torus
\(
	T_0 \leq G
\), any two such tori are also conjugate by
\(
	G(K)
\), any such torus \(T_0\) is contained in the center of a Levi subgroup \(L\) of a minimal parabolic subgroup
\(
	P \leq G
\), and \(L\) is the scheme centralizer of \(T_0\) by \cite[XXVI, corollary 6.11 and proposition 6.16]{sga3}. In the LG case these results hold for maximal split tori in the scheme derived subgroup
\(
	[G, G]
\) instead of the whole \(G\), see \cite[lemma 4.6 and theorem 4.7]{gille}. If
\(
	P^{+}, P^{-} \leq G
\) are opposite parabolic subgroups, then the \textit{Gauss decomposition}
\[
	G(K) = U^{+}(K)\, U^{-}(K)\, U^{+}(K)\, L(K)
\]
holds by \cite[XXVI, corollary 5.2]{sga3} (or \cite[theorem 3.1(c)]{gille}), where
\(
	U^{\pm} \leq P^{\pm}
\) are the unipotent radicals and
\(
	L = P^{+} \cap P^{-}
\) is the common Levi group.

If \(K\) is local, then the set of non-zero weights of
\(
	\mathfrak g
\) with respect to a maximal split torus
\(
	T_0 \leq G
\) is indeed a root system \(\Phi\) (its elements are called \textit{relative roots}). For any
\(
	\alpha \in \Phi
\) there exists a unique smooth subgroup
\(
	U_\alpha \leq G
\) with connected fibers such that
\(
	U_\alpha \cap L = 1
\) and
\(
	\mathrm{Lie}(U_\alpha)
	= \bigoplus_{
		\substack{
			i \alpha \in \Phi
		\\
			i \in \mathbb N_+
		}
	}
		\mathfrak g_{i \alpha}
\) \cite[XXVI, 7.4.2]{sga3}, where
\(
	L = \Cent_G(T_0)
\). Clearly,
\(
	P^{\pm}
	= \prod_{\alpha \in \pm \Pi}
		U_\alpha
	\rtimes L
\) are opposite minimal parabolic subgroups with the common Levi subgroup \(L\) for some subset
\(
	\Pi \subseteq \Phi
\) of positive roots. Moreover, for any root
\(
	\alpha \in \Phi
\) there exists
\(
	w \in U_\alpha(K)\, U_{-\alpha}(K)\, U_\alpha(K)
\) such that
\(
	\up w L = L
\) and
\(
	\up w{U_\beta} = U_{s_\alpha(\beta)}
\), where
\(
	s_\alpha(\beta)
	= \beta
	- 2 \frac{\alpha \cdot \beta}{\alpha \cdot \alpha}
		\alpha
\) is the reflection of \(\beta\) in the hyperplane
\(
	\alpha^{\perp}
\). This follows from \cite[XXVI, 7.4.2]{sga3} and the Gauss decomposition for the reductive group scheme
\(
	L_\alpha
	= \langle U_{-\alpha}, L, U_\alpha \rangle
\).

\subsection{Isotropic pinnings}

Motivated by the above, we say that a \textit{isotropic pre-pinning} of \(G\) consists of
\begin{itemize}

	\item
	a split torus
	\(
		T \leq G
	\) with a chosen isomorphism
	\(
		T \cong \mathbb G_{\mathrm m}^\ell
	\);

	\item
	a root system
	\(
		\Phi \subseteq \mathbb Z^\ell
	\) (with respect to some dot product on
	\(
		\mathbb R^\ell
	\)) with a chosen base
	\(
		\Delta \subseteq \Phi
	\) such that \(\Phi\) is the set of non-zero weights of
	\(
		\mathfrak g
	\) with respect to \(T_0\) (or
	\(
		K = 0
	\)) and all root subspaces are free \(K\)-modules;

\end{itemize}

We say that an isotropic pre-pinning
\(
	(T, \Phi)
\) \textit{is contained} in an isotropic pre-pinning
\(
	(T', \Psi')
\) if
\(
	T \leq T'
\), the inclusion is given by a constant surjective homomorphism
\(
	u \colon \mathbb Z^{\ell'} \to \mathbb Z^\ell
\) of the corresponding abelian groups, and
\(
	u(\Phi' \cup \{0\}) = \Phi \cup \{0\}
\) (the last condition is vacuous if \(K\) is non-zero). Clearly, every isotropic pre-pinning is contained in a pinning \'etale locally, so this definition of isotropic pre-pinnings coincides with the definition of isotropic pinnings from \cite[\S 2]{isotropic-elem}. Actually, the condition that the root subspaces
\(
	\mathfrak g_\alpha
\) are free is not necessary, but this simplifies some arguments. The group
\(
	G(K)
\) of \(K\)-points acts on the set of isotropic pinnings by conjugation
\(
	(T, \Phi) \mapsto (\up g T, \up g \Phi)
\), where
\(
	\up g \Phi = \{\up g \alpha \mid \alpha \in \Phi\}
\) is the image of \(\Phi\) under the canonical isomorphism
\(
	T \cong \up g T
\) given by the conjugation.

The \textit{rank} of an isotropic pre-pinning
\(
	(T, \Phi)
\) is \(0\) if \(T\) commutes with some simple factor of
\(
	(G / \Cent(G))_{\overline{\kappa(\mathfrak p)}}
\) for
\(
	\mathfrak p \in \Spec(K)
\), otherwise the rank of
\(
	(T, \Phi)
\) is the smallest rank of components of \(\Phi\). Clearly, if
\(
	(T, \Phi) \subseteq (T', \Phi')
\), then the rank of
\(
	(T, \Phi)
\) is at most the rank of
\(
	(T', \Phi')
\).

For every pre-pinning
\(
	(T, \Phi)
\) there exist unique \textit{root subgroups}
\(
	U_\alpha \leq G
\) for
\(
	\alpha \in \Phi
\) \cite[\S 2]{isotropic-elem} with the following properties.
\begin{itemize}

	\item
	\(
		U_\alpha \leq G
	\) are preserved under base change, they are smooth closed subschemes.

	\item
	If
	\(
		(T, \Phi)
	\) is a pinning (formally, a part of a pinning), then
	\(
		U_\alpha
	\) are ordinary root subgroups.

	\item
	If
	\(
		(T, \Phi) \subseteq (T', \Phi')
	\) and
	\(
		\alpha \in \Phi
	\), then
	\[
		U_\alpha
		= \prod_{
			\substack{
				\beta \in \Phi'
			\\
				\Image(\beta) \in \mathbb N_+ \alpha
			}
		}
			U_\beta.
	\]

	\item
	The scheme centralizer
	\(
		L = \Cent_G(T)
	\) normalizes all
	\(
		U_\alpha
	\).

	\item
	If
	\(
		\alpha, \beta \in \Phi
	\) are not anti-parallel, then
	\[
		[U_\alpha, U_\beta]
		\subseteq \prod_{
			\substack{
				i \alpha + j \beta \in \Phi
			\\
				i, j \in \mathbb N_+
			}
		}
			U_{i \alpha + j \beta}.
	\]

	\item
	If
	\(
		\alpha, 2 \alpha \in \Phi
	\) (i.e. \(\alpha\) is ultrashort in a component of \(\Phi\) of type
	\(
		\mathsf{BC}_\ell
	\)), then
	\(
		U_{2 \alpha} \leq U_\alpha
	\).

	\item
	If
	\(
		\Pi \subseteq \Phi
	\) is a subset of positive roots, then the product map
	\[
		\prod_{\alpha \in \Pi \setminus 2 \Pi}
			U_{-\alpha}
		\times L
		\times \prod_{\alpha \in \Pi \setminus 2 \Pi}
			U_\alpha
		\to G
	\]
	is an open embedding. The group schemes
	\(
		U^{\pm}
		= \prod_{\alpha \in \Pi \setminus 2 \Pi}
			U_{\pm \alpha}
	\) are the unipotent radicals of the opposite parabolic subgroups
	\(
		P^{\pm} = U^{\pm} \rtimes L
	\) with the common Levi subgroup \(L\).

\end{itemize}

A subset
\(
	\Sigma \subseteq \Phi
\) is
\begin{itemize}

	\item
	\textit{closed} if
	\(
		(\Sigma + \Sigma) \cap \Phi \subseteq \Sigma
	\);

	\item
	\textit{unipotent} if it is closed and contained in an open half-space;

	\item
	\textit{saturated root subsystem} if it is an intersection of \(\Phi\) with a subspace (e.g. its span), so it is closed and a root system.

\end{itemize}

If
\(
	\Sigma \subseteq \Phi
\) is a unipotent subset, then the multiplication morphism
\[
	\prod_{\alpha \in \Sigma \setminus 2 \Sigma}
		U_\alpha
	\to G
\]
is a closed embedding for any linear order on
\(
	\Sigma \setminus 2 \Sigma
\). We denote its image by
\(
	U_\Sigma \leq G
\), it is a group subscheme.

We say that a pre-pinning
\(
	(T, \Phi)
\) is an \textit{isotropic pinning} if there are
\(
	w_\alpha
	\in U_\alpha(K)\, U_{-\alpha}(K)\, U_\alpha(K)
\) for all
\(
	\alpha \in \Phi
\) such that
\(
	\up {w_\alpha} T = T
\),
\(
	\up {w_\alpha} L = L
\), and
\(
	\up{w_\alpha}{U_\beta} = U_{s_\alpha(\beta)}
\). If \(K\) is local, then every isotropic pre-pinning is contained in a maximal one, all maximal isotropic pre-pinnings are conjugate by
\(
	G(K)
\), and all maximal isotropic pre-pinnings are actually isotropic pinnings.

If \(K\) is semi-local with connected spectrum, then the \textit{isotropic rank} of a reductive group scheme \(G\) over \(K\) is the common rank of its maximal isotropic pinnings. In general case the \textit{local isotropic rank} of \(G\) is the minimum of the isotropic ranks of
\(
	G_{\mathfrak m}
\) for all maximal ideals
\(
	\mathfrak m \leqt K
\). The local isotropic rank is non-decreasing under base changes.

\subsection{Root morphisms}

Recall \cite[\S 4]{twisted-forms} that a \textit{\(2\)-step nilpotent \(K\)-module} is a pair
\(
	(M, M_0)
\), where \(M\) is a group with the group operation \(\dotplus\) and nilpotent filtration
\(
	\dot 0 \leq M_0 \leq M
\) (i.e.
\(
	[M, M]^\cdot \leq M_0
\) and
\(
	[M, M_0]^\cdot = \dot 0
\)), \(M_0\) has a structure of a left \(K\)-module, and there is an action
\(
	({-}) \cdot ({=})
\) of the multiplicative monoid
\(
	K^\bullet
\) on \(M\) by endomorphisms from the right such that
\begin{itemize}

	\item
	\(
		m \cdot (k + k')
		= m \cdot k
			\dotplus k k' \tau(m)
			\dotplus m \cdot k'
	\) for some (unique)
	\(
		\tau(m) \in M_0
	\);

	\item
	\(
		[m \cdot k, m' \cdot k'] = k k' [m, m']
	\);

	\item
	\(
		m \cdot k = k^2 m
	\) if
	\(
		m \in M_0
	\).

\end{itemize}
The map \(\tau\) satisfies the following identities:
\(
	\tau(m) = 2 m
\) for
\(
	m \in M_0
\),
\(
	\tau(m \cdot k) = k^2 \tau(m)
\), and
\(
	\tau(m \dotplus m') = \tau(m) + [m, m'] + \tau(m')
\). Any \(K\)-module \(N\) may be considered as a \(2\)-step nilpotent module
\(
	(N, 0)
\) or
\(
	(N, N)
\).

For any \(K\)-modules \(M_0\) and \(M_1\) and any bilinear form
\(
	c \colon M_1 \times M_1 \to M_0
\) we may construct the \textit{split} \(2\)-step nilpotent \(K\)-module
\(
	(M, M_0)
\), where
\(
	M = M_0 \dotoplus M_1
\) (i.e.
\(
	M_0 \times M_1
\) as a set) with
\begin{align*}
	(m_0 \dotoplus m_1)
		\dotplus (m'_0 \dotoplus m'_1)
	&= (m_0 + c(m_1, m'_1) + m'_0, m_1 + m'_1),
\\
	(m_0 \dotoplus m_1) \cdot k
	&= k^2 m_0 \dotoplus k m_1,
\\
	\tau(m_0 \dotoplus m_1)
	&= (2 m_0 - c(m_1, m_1)) \dotoplus 0.
\end{align*}
Clearly, every \(2\)-step nilpotent \(K\)-module
\(
	(M, M_0)
\) with free \(K\)-module
\(
	M / M_0
\) splits, see also \cite[\S 4]{twisted-forms}.

Any split \(2\)-step nilpotent \(K\)-module
\(
	(M, M_0)
\) determines a whole functor
\(
	\mathbb W(M)
\) on the category of unital \(K\)-algebras
\(
	\Ring_K
\). Namely, if
\(
	M = M_0 \dotoplus M_1
\) for some \(2\)-co cycle
\(
	c \colon M_1 \times M_1 \to M_0
\), then
\[
	\mathbb W(M)(K')
	= (M_0 \otimes_K K')
		\dotoplus (M_1 \otimes_K K').
\]
This construction is independent on the choice of the splitting, i.e. it may be given in terms of abstract \(2\)-step modules, see \cite[\S 4]{twisted-forms}. Also, this is a generalization of the functor
\(
	\mathbb W(N) = N \otimes_K ({-})
\) for a \(K\)-module \(N\).

Now let \(G\) be a reductive group scheme over \(K\) with an isotropic pinning
\(
	(T, \Phi)
\). For all
\(
	\alpha \in \Phi
\) there are natural group isomorphisms
\(
	t_\alpha
	\colon \mathbb W(\mathfrak g_\alpha)
	\to U_\alpha
\) unless \(\alpha\) is ultrashort in a component of type
\(
	\mathsf{BC}_\ell
\), see \cite[\S 2]{isotropic-elem}. In the exceptional case there is a (necessarily split) \(2\)-step nilpotent \(K\)-module
\(
	(
		\mathfrak g_\alpha
			\dotoplus \mathfrak g_{2 \alpha},
		\mathfrak g_{2 \alpha}
	)
\) together with an isomorphism
\(
	\mathbb W(
		\mathfrak g_\alpha
		\dotoplus \mathfrak g_{2 \alpha}
	)
	\to U_\alpha
\). We denote the domain of
\(
	t_\alpha
\) by
\(
	P_\alpha
\) in all cases, this is a unipotent group scheme (i.e. isomorphic to an affine space as a scheme). The homomorphisms
\(
	t_\alpha \colon P_\alpha \to G
\) have the following properties.
\begin{itemize}

	\item
	\(
		t_{2 \alpha} = t_\alpha|_{P_{2 \alpha}}
	\) if \(\alpha\) is ultrashort in a component of type
	\(
		\mathsf{BC}_\ell
	\).

	\item
	\(
		[t_\alpha(x), t_\beta(y)]
		= \prod_{
			\substack{
				i \alpha + j \beta \in \Phi
			\\
				i, j \in \mathbb N_+
			}
		}
			t_{i \alpha + j \beta}\bigl(
				f_{\alpha \beta}
					^{i \alpha + j \beta}
					(x, y)
			\bigr)
	\) if \(\alpha\) and \(\beta\) are not anti-parallel, where
	\(
		f_{\alpha \beta}^\gamma
	\) are some scheme morphisms. Here we fix some order in the product and assume that
	\(
		f_{\alpha \beta}^{2 i \alpha + 2 j \beta}
		= \dot 0
	\) if both
	\(
		i \alpha + j \beta
	\) and
	\(
		2 i \alpha + 2 j \beta
	\) are roots (so
	\(
		f_{\alpha \beta}^\gamma
	\) are unique).

	\item
	There are unique actions of \(L\) on
	\(
		P_\alpha
	\) such that
	\(
		t_\alpha
	\) are \(L\)-equivariant. Moreover,
	\(
		f_{\alpha \beta}^\gamma
	\) are \(L\)-equivariant.

	\item
	Let
	\(
		({-}) \cdot_\alpha ({=})
	\) be the canonical right action of the multiplicative monoid
	\(
		K^\bullet
	\) (or the whole multiplicative monoid scheme) on
	\(
		P_\alpha
	\), i.e.
	\(
		m \cdot k = k m
	\) if \(\alpha\) is not ultrashort in a component of type
	\(
		\mathsf{BC}_\ell
	\) (so
	\(
		m \cdot_\alpha k = m \cdot_{2 \alpha} k^2
	\) if \(\alpha\) is ultrashort and
	\(
		m \in P_{2 \alpha}(K)
		= \mathfrak g_{2 \alpha}
	\)). Then
	\[
		f_{\alpha \beta}^{i \alpha + j \beta}(
			x \cdot_\alpha k,
			x' \cdot_\beta k'
		)
		= f_{\alpha \beta}^{i \alpha + j \beta}(x, x')
			\cdot_{i \alpha + j \beta} k^i {k'}^j.
	\]

\end{itemize}

\subsection{Weyl triples}

A \textit{Weyl triple} is a tuple
\(
	(a, b, c)
	\in P_\alpha(K)
		\times P_{-\alpha}(K)
		\times P_\alpha(K)
\) such that
\(
	w = t_\alpha(a)\, t_{-\alpha}(b)\, t_\alpha(c)
\) is a Weyl element, i.e.
\(
	\up w {U_\beta} = U_{s_\alpha(\beta)}
\) and
\(
	\up w L = L
\). Such an element induces isomorphisms
\(
	\up w {(-)} \colon P_\beta \to P_{s_\alpha(\beta)}
\) of group schemes. An element
\(
	c \in P_\alpha(K)
\) is called \textit{invertible} if it is the last element of a Weyl triple. The set of invertible elements
\(
	P_\alpha(K)^* \subseteq P_\alpha(K)
\) is non-empty by the definition of isotropic pinnings.

\begin{lemma} \label{root-units}
	Let
	\(
		(T, \Phi)
	\) be an isotropic pinning of a reductive group scheme \(G\) and
	\(
		\alpha \in \Phi
	\) be a root.
	\begin{enumerate}

		\item
		If
		\(
			(a, b, c)
		\) is a Weyl triple with the Weyl element \(w\), then
		\(
			(\dotminus c, \dotminus b, \dotminus a)
		\),
		\(
			(\up w c, a, b)
		\), and
		\(
			(b, c, \up {w^{-1}} a)
		\) are also Weyl triples. In particular,
		\(
			a \in P_\alpha(K)
		\) is invertible if and only if it is a component of a Weyl triple in any position. Also,
		\(
			\dotminus P_\alpha(K)^* = P_\alpha(K)^*
		\).

		\item
		The set of Weyl triples is closed under conjugations by Weyl elements. If
		\(
			w \in U_\alpha U_{-\alpha} U_\alpha
		\) is a Weyl element, then
		\(
			\up w {P_\beta(K)^*}
			= P_{s_\alpha(\beta)}(K)^*
		\).

		\item
		If
		\(
			\alpha \in \Phi
		\) is ultrashort in a component of type
		\(
			\mathsf{BC}_\ell
		\), then
		\(
			P_{2 \alpha}(K)^* \subseteq P_\alpha(K)^*
		\).

		\item
		If the rank of \(\Phi\) is at least \(2\), then any Weyl triple
		\(
			(a, b, c)
		\) is uniquely determined by any its component.

		\item
		If
		\(
			e \in P_\alpha(K)^*
		\), then
		\(
			f_{\alpha, s_\alpha(\beta)}
				^\beta
				(e, {-})
			\colon P_{s_\alpha(\beta)}
			\to P_\beta
		\) is an isomorphism of group schemes for every neighbor \(\beta\) of \(\alpha\), it induces a bijection between
		\(
			P_{s_\alpha(\beta)}(K)^*
		\) and
		\(
			P_\beta(K)^*
		\).

	\end{enumerate}
\end{lemma}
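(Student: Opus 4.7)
I plan to prove (1)--(3) by direct group-theoretic manipulation of the defining identity $w = t_\alpha(a)\,t_{-\alpha}(b)\,t_\alpha(c)$, handle (4) via a rank-$2$ reduction (the main obstacle), and derive (5) by unwinding the Weyl conjugation $\up w$ and comparing with the commutator formula.

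For (1), the triple $(\dotminus c,\dotminus b,\dotminus a)$ has associated Weyl element $w^{-1}$. For the cyclic shift $(\up w c,a,b)$, read as a Weyl triple for $-\alpha$, the short calculation
\[
    t_{-\alpha}(\up w c)\cdot t_\alpha(a)\,t_{-\alpha}(b)
    = w\,t_\alpha(c)\,w^{-1}\cdot t_\alpha(a)\,t_{-\alpha}(b) = w,
\]
using $w^{-1}\,t_\alpha(a)\,t_{-\alpha}(b) = t_\alpha(\dotminus c)$, shows its associated element is $w$ itself; the third shift is symmetric. Applying the cyclic shift twice rotates the first component of an $\alpha$-triple into the third position of an $\alpha$-triple, which gives the ``any position'' equivalence of invertibility, and combined with the negation symmetry yields $\dotminus P_\alpha(K)^* = P_\alpha(K)^*$. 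Part (2) follows by conjugating $w = t_\alpha(a)\,t_{-\alpha}(b)\,t_\alpha(c)$ by $w'$, using $L$-equivariance of $t_\alpha$ and $\up{w'}U_\beta = U_{s_{\alpha'}(\beta)}$. Part (3) is the observation that $t_{2\alpha} = t_\alpha|_{P_{2\alpha}}$ and $s_{2\alpha} = s_\alpha$, so a Weyl triple for $2\alpha$ literally is one for $\alpha$.

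Part (4) is the main obstacle. By (1) it suffices to prove uniqueness from the third component. Given two Weyl triples $(a,b,c)$, $(a',b',c)$ with Weyl elements $w,w'$, both normalize $T$ and act on it as $s_\alpha$, so $w\,w'^{-1}\in L$; a direct cancellation gives $w\,w'^{-1} = t_\alpha(a)\,t_{-\alpha}(b\dotminus b')\,t_\alpha(\dotminus a') \in U_\alpha U_{-\alpha}U_\alpha$. To extract $a = a'$ one exploits rank $\geq 2$ by picking a root $\gamma$ not proportional to $\alpha$; since $w$ and $w'$ both send $U_\gamma$ into $U_{s_\alpha(\gamma)}$, the $L$-action of $w\,w'^{-1}$ on $U_\gamma$ is tightly constrained, and comparing the explicit expansions of $\up w$ and $\up{w'}$ on $t_\gamma(y)$ via the commutator formulas (equivalently, using the open embedding $U^-\times L\times U^+\hookrightarrow G$ inside the parabolic associated to the rank-$2$ subsystem spanned by $\alpha$ and $\gamma$) forces $a = a'$, after which $b = b'$ follows from $L\cap U_{-\alpha} = 1$. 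In rank $1$ the argument collapses because no such $\gamma$ exists and $L\cap U_\alpha U_{-\alpha}U_\alpha$ can be strictly larger than $\{1\}$, e.g.\ in the $\mathsf{BC}_1$ case.

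For (5), fix any Weyl triple $(a,b,e)$ with Weyl element $w$; then $\up w$ restricts to an isomorphism of group schemes $U_{s_\alpha(\beta)}\to U_\beta$. Expanding $\up{t_\alpha(e)}\, t_{s_\alpha(\beta)}(y) = [t_\alpha(e),t_{s_\alpha(\beta)}(y)]\,t_{s_\alpha(\beta)}(y)$ via the commutator identity
\[
    [t_\alpha(e),t_{s_\alpha(\beta)}(y)]
    = \prod_{\substack{i,j\geq 1\\ i\alpha+js_\alpha(\beta)\in\Phi}}
        t_{i\alpha+js_\alpha(\beta)}\bigl(
            f^{i\alpha+js_\alpha(\beta)}_{\alpha,s_\alpha(\beta)}(e,y)
        \bigr),
\]
one isolates the $U_\beta$-contribution: since $\beta$ is a neighbor of $\alpha$, the equation $i\alpha+js_\alpha(\beta) = \beta$ has the unique solution $(i,j) = (k,1)$ with $k = 2(\alpha\cdot\beta)/(\alpha\cdot\alpha) \geq 1$, so the leading contribution is $t_\beta\bigl(f^\beta_{\alpha,s_\alpha(\beta)}(e,y)\bigr)$. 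A weight-tracking argument through the subsequent conjugations by $t_{-\alpha}(b)$ and $t_\alpha(a)$, combined with the fact that $\up w\,t_{s_\alpha(\beta)}(y)$ lies in $U_\beta$, forces all competing contributions to cancel and identifies the group scheme isomorphism $\up w\colon P_{s_\alpha(\beta)}\to P_\beta$ with $y\mapsto f^\beta_{\alpha,s_\alpha(\beta)}(e,y)$. The bijection on invertible elements is then (2) applied to $w$.
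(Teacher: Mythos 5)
Parts (1)--(3) are fine (the paper itself disposes of them by citing Wiedemann), but your part (4) has a genuine gap. Its opening move --- ``both normalize \(T\) and act on it as \(s_\alpha\), so \(w w'^{-1} \in L\)'' --- is not available: in this paper a Weyl element is only required to satisfy \(\up{w}{U_\beta} = U_{s_\alpha(\beta)}\) and \(\up{w}{L} = L\); normalizing \(T\) is imposed on the pinning elements \(w_\alpha\) but not on general Weyl triples, and \(\up{w}{T}\) is merely some split torus in \(L\). Moreover, if \(w w'^{-1} \in L\) were granted, the proof would be over with no rank hypothesis at all: an element of \(U_\alpha(K)\, U_{-\alpha}(K)\, U_\alpha(K)\) lying in \(L(K)\) is trivial by uniqueness of decomposition in the open cell \(U^- \times L \times U^+ \hookrightarrow G\), so \(w = w'\) and then \(a = a'\), \(b = b'\) follow immediately. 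For the same reason your rank-one remark is false: \(L \cap U_\alpha U_{-\alpha} U_\alpha = \{1\}\) always, including the \(\mathsf{BC}_1\) case; what can fail in rank one is precisely the membership \(w w'^{-1} \in L\), since that element is only known to normalize \(L\) and every root subgroup. Thus the entire content of (4) is the step you leave as a gesture (``comparing the explicit expansions of \(\up{w}{}\) and \(\up{w'}{}\) on \(t_\gamma(y)\) \ldots forces \(a = a'\)''), and no actual argument is supplied for it; the paper sidesteps this by invoking Wiedemann's proposition 2.2.22.

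Your part (5) is in substance the paper's argument, but with a less convenient normal form and an over-quick justification. The paper rewrites the Weyl element as \(t_{-\alpha}(a)\, t_\alpha(e)\, t_{-\alpha}(c)\) with \(e\) in the middle (possible by part (1)); then the outer conjugations are by \(U_{-\alpha}\), which can never feed into the \(U_\beta\)-component (no root \(j\beta - m\alpha\) with \(j \geq 1\), \(m \geq 0\) reaches \(\beta\) by adding positive multiples of \(-\alpha\)), so the \(U_\beta\)-component of \(\up{w}{t_{s_\alpha(\beta)}(y)}\) is exactly \(f_{\alpha, s_\alpha(\beta)}^\beta(e, y)\) and the identification with the scheme isomorphism \(\up{w}{({-})}\) is immediate. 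With your ordering (\(e\) last), the bare statement that ``the result lies in \(U_\beta\), which forces all competing contributions to cancel'' is not sufficient: lying in \(U_\beta\) kills the non-\(\beta\) components but does not by itself exclude extra contributions \emph{to} the \(U_\beta\)-component, e.g. \(f_{\alpha, s_\alpha(\beta)}^\beta(a, {-})\) applied to whatever survives at \(s_\alpha(\beta)\) after conjugating by \(t_{-\alpha}(b)\). The repair is to note that conjugation by \(t_{-\alpha}(b)\) cannot produce \(U_\beta\)-terms and that the outermost \(t_\alpha(a)\) centralizes \(U_\beta\) (since \(\alpha\) and \(\beta\) are neighbors), after which your computation closes and, as you say, the statement about invertible elements follows from (2); additivity then comes for free from the identification with \(\up{w}{({-})}\).
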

\begin{proof}
	The first three claims are easy, see e.g. \cite[proposition 2.2.6]{wiedemann}. The fourth one follows from \cite[proposition 2.2.22]{wiedemann} (where
	\(
		U_\alpha^\sharp
		= t_\alpha(P_\alpha(K)^*)
		\subseteq G(K)
	\)).

	Let us prove the last claim. Choose a (necessarily unique) Weyl element
	\[
		w
		= t_{-\alpha}(a)\, t_\alpha(e)\, t_{-\alpha}(c)
		\in P_{-\alpha}(K)\,
			P_\alpha(K)\,
			P_{-\alpha}(K)
	\]
	and evaluate
	\(
		\up w {t_{s_\alpha(\beta)}(p)}
	\) using the commutator formula. We obtain that
	\(
		f_{\alpha, s_\alpha(\beta)}^\beta(y, {-})
	\) is a scheme isomorphism identifying the sets of invertible elements. Evaluating both sides of
	\[
		[
			t_\alpha(e),
			t_{s_\alpha(\beta)}(p \dotplus q)
		]
		= [t_\alpha(e), t_{s_\alpha(\beta)}(p)]\,
		\up{t_{s_\alpha(\beta)}(p)}{
			[t_\alpha(e), t_{s_\alpha(\beta)}(q)]
		}
	\]
	using the commutator formula, one may also check that the morphism from the statement is a homomorphism.
\end{proof}

\section{Categorical logic} \label{categorical-logic}

\subsection{Ind- and pro-completions}

Recall that for any category
\(
	\mathbf C
\) its \textit{pro-completion}
\(
	\mathbf C \to \Pro(\mathbf C)
\) is the universal functor to a category with all (small) projective limits. In other words,
\begin{itemize}

	\item
	for any small filtered category
	\(
		\mathbf I
	\) and a contra-variant functor
	\(
		X \colon \mathbf I \to \Pro(\mathbf C)
	\) there exists
	\(
		\varprojlim_{i \in \mathbf I} X_i
		\in \Pro(\mathbf C)
	\);

	\item
	for any functor
	\(
		F \colon \mathbf C \to \mathbf D
	\) to a category with small projective limits there exist a functor
	\(
		G \colon \Pro(\mathbf C) \to \mathbf D
	\) preserving small projective limits and a natural isomorphism
	\(
		\sigma \colon F \to G|_{\mathbf C}
	\);

	\item
	if
	\(
		(G', \sigma')
	\) is another such pair, then there exists unique natural isomorphism
	\(
		\tau \colon G \to G'
	\) such that
	\(
		\tau|_{\mathbf C} \circ \sigma = \sigma'
	\).

\end{itemize}
The pro-completion always exists and is unique up to an equivalence. It may be constructed as the category of all \textit{inverse systems}, i.e. functors
\(
	X \colon \mathbf I_X^\op \to \mathbf C
\) for small filtered categories
\(
	\mathbf I_X
\), with
\[
	\Pro(\mathbf C)(X, Y)
	= \varprojlim_{j \in \mathbf I_Y}
		\varinjlim_{i \in \mathbf I_X}
			\mathbf C(X_i, Y_j).
\]

The \textit{ind-completion}
\(
	\Ind(\mathbf C) = \Pro(\mathbf C^\op)^\op
\) may be defined by the duality. Its objects are \textit{direct systems}, i.e. functors
\(
	X \colon \mathbf I_X \to \mathbf C
\) for small filtered categories
\(
	\mathbf I_X
\), with
\[
	\Ind(\mathbf C)(X, Y)
	= \varprojlim_{i \in \mathbf I_X}
		\varinjlim_{j \in \mathbf I_Y}
			\mathbf C(X_i, Y_j).
\]
The canonical functors
\(
	\mathbf C \to \Pro(\mathbf C)
\) and
\(
	\mathbf C \to \Ind(\mathbf C)
\) are fully faithful. If
\(
	\mathbf C \to \mathbf D
\) is a fully faithful functor, then the induced functors
\(
	\Pro(\mathbf C) \to \Pro(\mathbf D)
\) and
\(
	\Ind(\mathbf C) \to \Ind(\mathbf D)
\) are also fully faithful.

\subsection{Infinitary coherence}

A category
\(
	\mathbf C
\) is called \textit{infinitary positive} if the following properties hold.
\begin{itemize}

	\item
	\(
		\mathbf C
	\) has all finite limits.

	\item
	For every morphism
	\(
		f \in \mathbf C(X, Y)
	\) the kernel pair
	\(
		X \times_Y X \rightrightarrows X
	\) has a coequalizer.

	\item
	The class of regular epimorphisms (i.e. coequalizers of some pairs of morphisms) is stable under pullbacks.

	\item
	\(
		\mathbf C
	\) has all small coproducts
	\(
		\bigsqcup_{i \in I}
			X_i
	\).

	\item
	The embedding morphisms
	\(
		X \to X \sqcup Y
	\) and
	\(
		Y \to X \sqcup Y
	\) are monomorphisms.

	\item
	The intersection of \(X\) and \(Y\) as subobjects of
	\(
		X \sqcup Y
	\) is empty (i.e. the initial object \(\varnothing\)).

	\item
	The natural morphism
	\(
		\bigsqcup_{i \in I}
			(X_i \times Y)
		\to (
			\bigsqcup_{i \in I}
				X_i
		) \times Y
	\) is an isomorphism for all small families
	\(
		(X_i)_{i \in I}
	\) and all \(Y\).

\end{itemize}

If instead of all small families in coproducts we consider only finite ones, then
\(
	\mathbf C
\) is called \textit{positive}.

Every morphism
\(
	f \colon X \to Y
\) in a \textit{regular} category (i.e. a category satisfying the first three axioms of positive categories) has an \textit{image decomposition}
\(
	X \to \Image(f) \to Y
\) \cite[definition 5, axiom (2')]{carboni-vitale}, where the left morphism is a regular epimorphism and the right one is a monomorphism. Such image decomposition is unique up to unique isomorphism, it is functorial on \(f\) and stable under base change. A morphism is an isomorphism if and only if it is both a monomorphism and a regular epimorphism.

Moreover, in an positive category the classes of subobjects
\(
	\Sub(X)
\) are lattices with the bottom element
\(
	\varnothing \subseteq X
\), the top element
\(
	X \subseteq X
\), the binary unions
\(
	Y \cup Z = \Image(Y \sqcup Z \to X)
\), and the binary intersections
\(
	Y \cap Z = Y \times_X Z
\) of subobjects
\(
	Y, Z \subseteq X
\). For any
\(
	f \colon X \to Y
\) the base change maps
\(
	f^* \colon \Sub(Y) \to \Sub(X)
\) are lattice homomorphisms and the image maps
\(
	f_* \colon \Sub(X) \to \Sub(Y)
\) are \(\cup\)-semi-lattice homomorphisms (left adjoint to \(f^*\)). In the infinitary case the lattices
\(
	\Sub(X)
\) have all small unions and these unions are preserved under base changes and images.

A functor
\(
	F \colon \mathbf C \to \mathbf D
\) between regular categories is called \textit{exact} if it preserves all finite limits and regular epimorphisms (so it also preserves the images of morphisms). A functor between infinitary positive categories is called \textit{geometric} if it is exact and preserves all coproducts, its finitary counterpart for positive categories is called \textit{coherent}.

For example, \(\Set\) is infinitary positive, as well as all presheaf categories
\(
	\Cat(\mathbf D^\op, \Set)
\). For any positive category
\(
	\mathbf C
\) both
\(
	\Ind(\mathbf C)
\) and
\(
	\Pro(\mathbf C)
\) are positive by \cite[the list after example 1.12 and \S 1.8]{jacqmin-janelidze} and the embeddings
\(
	\mathbf C \to \Ind(\mathbf C)
\),
\(
	\mathbf C \to \Pro(\mathbf C)
\) are coherent. Namely, suppose that
\(
	f \colon X \to Y
\) is a \textit{level} morphism in
\(
	\Ind(\mathbf C)
\) or
\(
	\Pro(\mathbf C)
\), i.e. a natural transformation between direct or inverse systems considered as a morphism in the completion. Then its component-wise image decomposition
\(
	X \to \Image(f) \to Y
\) is the image decomposition in the completion. Finite limits and coproducts of level diagrams may be computed component-wise \cite[propositions 6.6.16 and 6.1.18]{kashiwara-schapira} and every finite acyclic diagram in the completion is isomorphic to a level one \cite[theorem 6.4.3]{kashiwara-schapira}, including the discrete diagrams for products and coproducts, as well as the equalizer diagrams
\(
	\bullet \rightrightarrows \bullet
\) and the fiber product diagrams
\(
	\bullet \to \bullet \leftarrow \bullet
\). Of course,
\(
	\Ind(\mathbf C)
\) is infinitary positive with
\[
	\coprod_{i \in I}
		X_i
	= \varinjlim_{J \subseteq I \text{ finite}}
		\coprod_{i \in J}
			X_i
\]
and
\[
	\bigcup_{i \in I}
		A_i
	= \varinjlim_{J \subseteq I \text{ finite}}
		\bigcup_{i \in J}
			A_i
\]
for subobjects
\(
	A_i \subseteq X
\). If
\(
	\mathbf C \to \mathbf D
\) is a coherent functor, then the induced functor
\(
	\Pro(\mathbf C) \to \Pro(\mathbf D)
\) between the pro-completions is also coherent, and
\(
	\Ind(\mathbf C) \to \Ind(\mathbf D)
\) is geometric.

\subsection{Geometric logic}

There is a convenient way to work within infinitary positive categories using geometric logic. Let \(L\) be a multi-sorted first order language, i.e. a class of \textit{sorts} \(D\), functional symbols
\(
	f \colon D_1 \times \ldots \times D_n \to D'
\) with given arities and sorts, and predicate symbols
\(
	p \colon D_1 \times \ldots \times D_n \to \mathbb B
\) with given arities and sorts including the equality symbols
\(
	{=_{D}} \colon D \times D \to \mathbb B
\) for each sort \(D\). A \textit{context} \(\Phi\) is a finite set of variables
\(
	x \colon D
\) with prescribed sorts, we also sometimes write
\(
	x^D
\) instead of
\(
	x \colon D
\). A \textit{term}
\(
	t \colon D
\) in a context \(\Phi\) is either a variable
\(
	x \colon D
\) from \(\Phi\) or an expression
\(
	f(t_1, \ldots, t_n)
\), where
\(
	f \colon D_1 \times \ldots \times D_n \to D
\) is a functional symbol and
\(
	t_i \colon D_i
\) are smaller terms in the same context. An \textit{elementary formula} in a context \(\Phi\) is an expression
\(
	p(t_1, \ldots, t_n)
\), where
\(
	p \colon D_1 \times \ldots \times D_n \to \mathbb B
\) is a predicate symbol (e.g. an equality) and
\(
	t_i \colon D_i
\) are terms in the same context. A \textit{formula} in a context \(\Phi\) is either an elementary formula, or the propositional constant \(\top\), or an expression
\(
	(\varphi \wedge \psi)
\) for simpler formulas \(\varphi\) and \(\psi\) in the same context, or an expression
\(
	\bigvee_{i \in I} \varphi_i
\) for simpler formulas
\(
	\varphi_i
\) indexed by a set \(I\) in the same context, or an expression
\(
	\exists x^D\, \varphi
\) for a formula \(\varphi\) in the context
\(
	\Phi \cup \{x^D\}
\) (assuming
\(
	x \notin \Phi
\)). In other words, formulas are represented by possibly infinite syntactic trees, we only require that there are no infinite branches. Finally, a \textit{sequent} is a formal expression
\(
	\Phi;\, \Gamma \vdash \varphi
\), where \(\Phi\) is a context, \(\Gamma\) is a finite set of formulae in the context \(\Phi\), and \(\varphi\) is a formula in the same context.

Suppose that
\(
	\mathbf C
\) is an infinitary positive category and \(L\) is a multi-sorted first order language. An \textit{interpretation} of \(L\) in
\(
	\mathbf C
\) maps every domain \(D\) to an object
\(
	[\![d]\!] \in \mathbf C
\), every functional symbol
\[
	f \colon D_1 \times \ldots \times D_n \to D'
\]
to a morphism
\[
	[\![f]\!]
	\colon [\![D_1]\!] \times \ldots \times [\![D_n]\!]
	\to [\![D']\!],
\]
and every predicate symbol
\[
	p \colon D_1 \times \ldots \times D_n \to \mathbb B
\]
to a subobject
\[
	[\![p]\!]
	\subseteq [\![D_1]\!] \times \ldots [\![D_n]\!]
\]
such that
\[
	[\![=_D]\!]
	\subseteq [\![D]\!] \times [\![D]\!]
\] is the diagonal for every sort \(D\). Such an interpretation naturally maps every context \(\Phi\) to
\(
	[\![\Phi]\!] = \prod_{x^D \in \Phi} [\![D]\!]
\), every first-order term
\(
	t \colon D
\) in a context \(\Phi\) to a morphism
\(
	[\![t]\!]_\Phi \colon [\![\Phi]\!] \to [\![D]\!]
\), and every formula \(\varphi\) in a context \(\Phi\) to a subobject
\(
	[\![\varphi]\!]_\Phi \subseteq [\![\Phi]\!]
\) in such a way that
\begin{align*}
	[\![t]\!]_{\Phi \sqcup \Phi}
	&= [\![t]\!]_\Phi
	\circ \bigl(
		\pi_\Phi
		\colon [\![\Phi]\!] \times [\![\Psi]\!]
		\to [\![\Psi]\!]
	\bigr);
\\
	[\![x]\!]_{x^D}
	&= \id_{[\![D]\!]};
\\
	[\![t|_{x := t'}]\!]_\Phi
	&= [\![t]\!]_{\Phi \sqcup \{x^D\}}
	\circ (\id_{[\![\Phi]\!]}, [\![t']\!]_\Phi);
\\
	[\![\varphi]\!]_{\Phi \sqcup \Phi}
	&= [\![\varphi]\!]_\Phi \times [\![\Psi]\!];
\\
	[\![\varphi|_{x := t}]\!]_\Phi
	&= \lim\bigl(
		[\![\varphi]\!]_{\Phi \sqcup \{x^D\}}
		\hookrightarrow [\![\Phi]\!] \times [\![D]\!]
		\xleftarrow{(\id, [\![t]\!]_\Phi)} [\![\Phi]\!]
	\bigr);
\\
	[\![\top]\!]_\Phi &= [\![\Phi]\!];
\\
	[\![\varphi \wedge \psi]\!]_\Phi
	&= [\![\varphi]\!]_\Phi \cap [\![\psi]\!]_\Phi;
\\
	\bigl[\!\!\!\:\bigl[
		\bigvee_{i \in I} \varphi_i
	\bigr]\!\!\!\:\bigr]_\Phi
	&= \bigcup_{i \in I} [\![\varphi_i]\!]_\Phi;
\\
	[\![\exists x^D\, \varphi]\!]_\Phi
	&= \Image\bigl(
		[\![\varphi]\!]_{\Phi \sqcup \{x^D\}}
		\hookrightarrow [\![\Phi]\!] \times [\![D]\!]
		\to [\![\Phi]\!]
	\bigr).
\end{align*}
A sequent
\(
	\Phi;\, \Gamma \vdash \phi
\) is \textit{true} in
\(
	\mathbf C
\) if
\(
	\bigcap_{\psi \in \Gamma} [\![\phi]\!]_\Phi
	\subseteq [\![\phi]\!]_\Phi
\). We usually omit the context in a the sequent if it is the minimal meaningful one. Also, we write
\(
	\vdash \varphi
\) just as \(\varphi\), this is useful when dealing with algebraic theories.

There is a natural way to derive true sequents from another true sequents. Namely, consider the following rules for a fixed language \(L\).
\begin{itemize}

	\item
	\(
		\displaystyle
		\frac{
			\Phi;\, \Gamma \vdash \varphi
		}{
			\Phi, x^D;\, \Gamma \vdash \varphi
		}
	\);\quad
	\(
		\displaystyle
		\frac{
			\Phi, x^D;\, \Gamma \vdash \varphi
		}{
			\Phi;\, \Gamma|_{x := t}
			\vdash \varphi|_{x := t}
		}
	\);

	\item
	\(
		\displaystyle
		\frac{}{
			\Phi;\, \varphi \vdash \varphi
		}
	\);\quad
	\(
		\displaystyle
		\frac{
			\Phi;\, \Gamma \vdash \varphi
		}{
			\Phi;\, \Gamma, \psi \vdash \varphi
		}
	\);\quad
	\(
		\displaystyle
		\frac{
			\Phi;\, \Gamma \vdash \varphi
		\quad
			\Phi;\, \varphi, \Delta \vdash \psi
		}{
			\Phi;\, \Gamma, \Delta \vdash \psi
		}
	\);

	\item
	\(
		\displaystyle
		\frac{}{
			\Phi, x^D;\, \Gamma \vdash x =_D x
		}
	\);\quad
	\(
		\displaystyle
		\frac{
			\Phi, x^D, y^D;\, \Gamma \vdash x =_D y
		\quad
			\Phi, x^D, y^D;\, \Gamma
			\vdash \varphi|_{y := x}
		}{
			\Phi, x^D, y^D;\, \Gamma \vdash \varphi
		}
	\);

	\item
	\(
		\displaystyle
		\frac{}{
			\Phi;\, \Gamma \vdash \top
		}
	\);\quad
	\(
		\displaystyle
		\frac{
			\Phi;\, \Gamma \vdash \varphi
		\quad
			\Phi;\, \Gamma \vdash \psi
		}{
			\Phi;\, \Gamma \vdash \varphi \wedge \psi
		}
	\);\quad
	\(
		\displaystyle
		\frac{
			\Phi;\, \Gamma \vdash \varphi \wedge \psi
		}{
			\Phi;\, \Gamma \vdash \varphi
		}
	\);\quad
	\(
		\displaystyle
		\frac{
			\Phi;\, \Gamma \vdash \varphi \wedge \psi
		}{
			\Phi;\, \Gamma \vdash \psi
		}
	\);

	\item
	\(
		\displaystyle
		\frac{
			\Phi; \Gamma \vdash \varphi_i
			\text{ for some } i \in I
		}{
			\Phi; \Gamma
			\vdash \bigvee_{i \in I} \varphi_i
		}
	\);\quad
	\(
		\displaystyle
		\frac{
			\Phi; \Gamma, \varphi_i \vdash \psi
			\text{ for all } i \in I
		}{
			\Phi; \Gamma, \bigvee_{i \in I} \varphi_i
			\vdash \psi
		}
	\);

	\item
	\(
		\displaystyle
		\frac{
			\Phi;\, \Gamma \vdash \varphi|_{x := t}
		}{
			\Phi;\, \Gamma
			\vdash \exists x^D\, \varphi
		}
	\);\quad
	\(
		\displaystyle
		\frac{
			\Phi, x^D;\, \Gamma, \varphi
			\vdash \psi
		}{
			\Phi;\, \Gamma, \exists x^D\, \varphi
			\vdash \psi
		}
	\).

\end{itemize}
A \textit{derivation graph} is a possibly infinite oriented graph, where edges are labeled by sequents. Vertices are either instances of the derivation rules, or ``assumption vertices'' with only one output edge, or the ``conclusion vertex'' with only one input edge. We require that there is a unique conclusion vertex and there are no backward infinite paths
\(
	\ldots \to \bullet \to \bullet
\), in particular, that the graph is acyclic. For every interpretation of \(L\) in an infinitary positive
\(
	\mathbf C
\) and every derivation graph if all assumptions hold in
\(
	\mathbf C
\), then the conclusion also holds in
\(
	\mathbf C
\) by \cite[proposition D1.3.2]{elephant}.

Similarly, if we consider only formulas with finitary disjunctions (i.e. working within coherent logic), then \(L\) may be interpreted in any positive category and the derivation preserves the class of true sequents. In particular, we may speak about group objects, ring objects, algebras over unital ring objects, and so on in any positive category (of course, algebraic theories may even be interpreted in any Cartesian category).

\subsection{Pretopoi}

For example, a binary relation
\(
	E \subseteq X \times X
\) in an infinitary positive category
\(
	\mathbf C
\) is called an \textit{equivalence relation} if it satisfies the axioms
\(
	E(x, x)
\);
\(
	E(x, y) \vdash E(y, x)
\); and
\(
	E(x, y), E(y, z)
	\vdash E(x, z)
\). Every binary relation \(R\) is contained in the smallest equivalence relation \cite[the proof of lemma A1.4.19]{elephant}
\[
	\bigl[\!\!\!\:\bigl[
		x = y
		\vee \bigvee_{n = 0}^\infty
			\exists x_1^X \ldots \exists x_n^X
				\bigl(
					R'(x, x_1)
					\wedge R'(x_1, x_2)
					\wedge \ldots
					\wedge R'(x_n, y)
				\bigr)
	\bigr]\!\!\!\:\bigr]_{x^X, y^X},
\]
where
\[
	R'
	= [\![
		R(x, y) \vee R(y, x)
	]\!]_{x^X, y^X}
\]
is the symmetric closure of \(R\).

Also, in an infinitary positive category
\(
	\mathbf C
\) there is a one-to-one correspondence between morphisms
\(
	f \colon X \to Y
\) and \textit{functional relations}
\(
	F \subseteq X \times Y
\), i.e. satisfying the axioms
\(
	\exists{y^Y} F(x, y)
\) and
\(
	F(x, y), F(x, y') \vdash y = y'
\). Namely, \(f\) corresponds to its \textit{graph}
\(
	[\![f(x) = y]\!]_{x^X, y^Y}
\).

Finally, we need the notion of pretopoi. Recall that in \(\Set\) (and in all presheaf categories) every equivalence relation
\(
	E \subseteq X \times X
\) is the kernel pair of a regular epimorphism
\(
	X \to X / E
\). A positive category
\(
	\mathbf C
\) is called a \textit{pretopos} if every equivalence relation is a kernel pair. In a pretopos every epimorphism is regular \cite[corollary A1.4.9]{elephant}, so a morphism \(f\) is invertible if and only if it is a monomorphism and an epimorphism. Also, any infinitary pretopos (i.e. a pretopos and an infinitary positive category) is co-complete \cite[lemma A1.4.19]{elephant},
\(
	\colim_{i \in \mathbf I}
		X_i
	= (
		\bigsqcup_{i \in \mathbf I}
		X_i
	) / E
\), where \(E\) is the equivalence relation generated by
\[
	\bigsqcup_{i, j \in \mathbf I}
		\bigl[\!\!\!\:\bigr[
			\bigvee_{
				i \xrightarrow u k \xleftarrow v k
			}
				X_u(x) = X_v(y)
		\bigr]\!\!\!\:\bigr]_{x^{X_i}, y^{X_j}}.
\]

If
\(
	\mathbf C
\) is an infinitary pretopos such as
\(
	\Set
\), then there is the evaluation functor
\[
	\ev
	\colon \Ind(\mathbf C)
	\to \mathbf C,\,
	X
	\mapsto \varinjlim_{i \in I}
		X_i
\]
from the universal property, it is left adjoint to the embedding. It is easy to check using geometric logic that this functor is geometric, i.e. direct limits in
\(
	\mathbf C
\) are coherent as functors
\(
	\Cat(\mathbf I, \mathbf C) \to \mathbf C
\) for all small filtered
\(
	\mathbf I
\).

\subsection{Exact completion}

For every regular category
\(
	\mathbf C
\) there is a universal exact functor
\(
	\mathbf C \to \Ex(\mathbf C)
\) to a regular category with all factor-objects by equivalence relation (a \textit{Barr exact} category), this functor is fully faithful. We need the following explicit construction \cite[definition 11]{carboni-vitale}.
\begin{itemize}

	\item
	Objects of
	\(
		\Ex(\mathbf C)
	\) are formal fractions
	\(
		X / {\equiv_X}
	\), where
	\(
		X \in \mathbf C
	\) and
	\(
		{\equiv_X} \subseteq X \times X
	\) is an equivalence relation.

	\item
	Morphisms
	\(
		X / {\equiv_X} \to Y / {\equiv_Y}
	\) are binary relations
	\(
		F \subseteq X \times Y
	\) such that
	\[
		x \equiv_X x', y \equiv_Y y', F(x, y)
		\vdash F(x', y'),
	\quad
		\exists y^Y F(x, y),
	\quad
		F(x, y), F(x, y')
		\vdash y \equiv_Y y'.
	\]

	\item
	The composition of
	\(
		F \colon X / {\equiv_X} \to Y / {\equiv_Y}
	\) and
	\(
		G \colon Y / {\equiv_Y} \to Z / {\equiv_Z}
	\) is given by
	\[
		[\![
			\exists y^Y (F(x, y) \wedge G(y, z))
		]\!]_{x^X, z^Z},
	\]
	the identity morphism
	\(
		X / {\equiv_X} \to X / {\equiv_X}
	\) is just
	\(
		\equiv_X
	\).

	\item
	The functor
	\(
		\mathbf C \to \Ex(\mathbf C)
	\) is given by
	\(
		X \mapsto X / {=_X}
	\) and
	\(
		(f \colon X \to Y)
		\mapsto [\![f(x) = y]\!]_{x^X, y^Y}
	\).

	\item
	A morphism
	\(
		F \colon X / {\equiv_X} \to Y / {\equiv_Y}
	\) is a monomorphism if and only if
	\(
		F(x, y), F(x', y) \vdash x = x'
	\). The subobject semi-lattice of
	\(
		X / {\equiv_X}
	\) is naturally isomorphic to the semi-lattice of
	\(
		A \subseteq X
	\) such that
	\(
		A(x), x \equiv_X x' \vdash A(x')
	\) (with the equivalence relation
	\(
		{\equiv_X} \cap (A \times A)
	\)).

	\item
	A morphism
	\(
		F \colon X / {\equiv_X} \to Y / {\equiv_Y}
	\) is a regular epimorphism if and only if
	\(
		\exists x^X\, F(x, y)
	\). The image of \(F\) is
	\(
		[\![\exists x^X\, F(x, y)]\!]_{y^Y}
	\) in the subobject semi-lattice of
	\(
		Y / {\equiv_Y}
	\).

	\item
	The product of
	\(
		X / {\equiv_X}
	\) and
	\(
		Y / {\equiv_Y}
	\) is
	\(
		(X \times Y) / ({\equiv_X} \times {\equiv_Y})
	\). The equalizer of
	\(
		F, G \colon X / {\equiv_X} \to Y / {\equiv_Y}
	\) is
	\(
		[\![
			\exists{y^Y} (
				F(x, y) \wedge G(x, y)
			)
		]\!]_{x^X}
	\) in the subobject lattice of
	\(
		X / {\equiv_X}
	\).

	\item
	A subobject
	\(
		E
		\subseteq (X \times X)
		/ ({\equiv_X} \times {\equiv_X})
	\) is an equivalence relation in
	\(
		\Ex(\mathbf C)
	\) if and only if \(E\) is an equivalence relation in
	\(
		\mathbf C
	\) and
	\(
		{\equiv_X} \subseteq E
	\). In this case
	\(
		X / E
	\) is the factor-object.

\end{itemize}
The finitary version of the following lemma is \cite[corollary A3.3.10]{elephant}
\begin{lemma} \label{ex-compl}
	For any infinitary positive category
	\(
		\mathbf C
	\) its exact completion
	\(
		\Ex(\mathbf C)
	\) is an infinitary pretopos and the fully faithful functor
	\(
		\mathbf C \to \Ex(\mathbf C)
	\) is geometric.
\end{lemma}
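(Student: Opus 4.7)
The plan is to leverage the finitary version cited as \cite[corollary A3.3.10]{elephant}, which already gives that $\Ex(\mathbf{C})$ is a pretopos (in particular positive with quotients of all equivalence relations) and that the embedding $\mathbf{C} \to \Ex(\mathbf{C})$ is coherent. The remaining work therefore consists of constructing small coproducts in $\Ex(\mathbf{C})$, verifying the infinitary positivity axioms for them, and showing that the embedding preserves these coproducts.

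Given a small family $(X_i/{\equiv_{X_i}})_{i \in I}$, I would form $X = \bigsqcup_{i \in I} X_i$ in $\mathbf{C}$ (available by infinitary positivity) and set ${\equiv_X} = \bigcup_{i \in I} {\equiv_{X_i}} \subseteq X \times X$, where each ${\equiv_{X_i}}$ is pushed forward along $X_i \times X_i \hookrightarrow X \times X$. Transitivity of ${\equiv_X}$ uses disjointness of the coproduct in $\mathbf{C}$: a chain $x \equiv_{X_i} y \equiv_{X_j} z$ forces $i = j$ because the component containing $y$ is uniquely determined. The candidate coproduct in $\Ex(\mathbf{C})$ is then $X/{\equiv_X}$ with the structural injections coming from $X_i \hookrightarrow X$. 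To verify the universal property, a compatible cocone of morphisms given by functional relations $F_i \subseteq X_i \times Y$ would be assembled into $F = \bigcup_i F_i \subseteq X \times Y$, and the axioms of a functional relation for $F$ follow componentwise using geometric logic, since each $(x,y) \in X \times Y$ lies in a unique $X_i \times Y$ by coproduct disjointness.

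Next I would verify the remaining infinitary axioms: the injections $X_i/{\equiv_{X_i}} \to X/{\equiv_X}$ are monomorphisms because the characterization $F(x,y) \wedge F(x',y) \vdash x = x'$ pulls back to the corresponding property of $X_i \hookrightarrow X$ in $\mathbf{C}$; intersections of distinct components land in the initial object by disjointness in $\mathbf{C}$ and hence are empty in $\Ex(\mathbf{C})$; and distribution over products, after unfolding the explicit description $(A/{\equiv_A}) \times (B/{\equiv_B}) = (A \times B)/({\equiv_A} \times {\equiv_B})$ from the definition of $\Ex$, reduces to the infinitary distributivity already present in $\mathbf{C}$. Preservation of small coproducts by the embedding is then immediate, since the trivial relation $=_X$ on $X = \bigsqcup_i X_i$ coincides with $\bigcup_i =_{X_i}$ as a subobject of $X \times X$, again by coproduct disjointness in $\mathbf{C}$.

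The main obstacle I anticipate is the bookkeeping of infinitary unions of subobjects inside $\mathbf{C}$ and their interaction with products, pullbacks, and pushforwards along coproduct injections: these are needed simultaneously to certify ${\equiv_X}$ as an equivalence relation, to characterize subobjects and morphisms out of $X/{\equiv_X}$ via the description of $\Ex(\mathbf{C})$, and to verify the distributive law. Once the commutation of $\bigcup$ with products and images is in hand (which is essentially guaranteed by infinitary positivity of $\mathbf{C}$), the remainder is a routine infinitary extension of the pattern of \cite[corollary A3.3.10]{elephant}.
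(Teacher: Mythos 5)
Your proposal is correct and follows essentially the same route as the paper: the paper's proof simply exhibits the coproduct $\bigsqcup_i (X_i/{\equiv_i}) \cong (\bigsqcup_i X_i)/(\bigsqcup_i {\equiv_i})$ (your $\bigcup_i {\equiv_{X_i}}$ is the same subobject of $X \times X$) and remarks that the required sequents — disjointness, transitivity via disjointness, the universal property, distributivity, and preservation of coproducts by the embedding — are checked in the internal geometric logic, with the finitary pretopos part coming from the cited corollary A3.3.10. Your write-up just makes these checks explicit.
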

\begin{proof}
	It is easy to see that
	\[
		\bigsqcup_{i \in I}
			(X_i / {\equiv_i})
		\cong \bigl(\bigsqcup_{i \in I} X_i\bigr)
		/ \bigl(\bigsqcup_{i \in I} {\equiv_i}\bigr)
	\]
	The required sequents may be checked using the geometric logic.
\end{proof}

It follows that for any positive category
\(
	\mathbf C
\) the category
\(
	\Ex(\Ind(\mathbf C))
\) is an infinitary pretopos and the fully faithful functor
\(
	\mathbf C \to \Ex(\Ind(\mathbf C))
\) is coherent. Also, if
\(
	F \colon \mathbf C \to \mathbf D
\) is a geometric functor between infinitary positive categories, then
\(
	\Ex(\mathbf C) \to \Ex(\mathbf D)
\) is also geometric and it is fully faithful whenever \(F\) is fully faithful. If
\(
	\mathbf C
\) is already an infinitary pretopos, then
\(
	\mathbf C \to \Ex(\mathbf C)
\) is an equivalence of categories.

\section{Co-localization}

\subsection{Category \(\mathbf U_K\)}

Take a unital ring \(K\). The category
\(
	\mathbf P_K = \Cat(\Ring_K^\fp, \Set)
\) of presheaves on the category of affine schemes of finite presentation is an infinitary pretopos and contains all finitely presented affine schemes. It follows that the category
\(
	\Pro(\mathbf P_K)
\) is positive, the categories
\(
	\Ind(\Pro(\mathbf P_K))
\) and
\(
	\Ind(\mathbf P_K)
\) are infinitary positive, and the categories
\[
	\Ex(\Ind(\mathbf P_K)),
\quad
	\mathbf U_K
	= \Ex(\Ind(\Pro(\mathbf P_K)))
\]
are infinitary pretopoi by lemma \ref{ex-compl}. We mostly work within this category
\(
	\mathbf U_K
\), it differs from
\(
	\mathbf{IP}_{\Ring_K^\fp}
\) from \cite[\S 4]{isotropic-elem} only by the external exact completion necessary to define group objects by generators and relations. Recall that the category of unital algebras
\(
	\Ring_K
\) is the ind-completion of the category of finitely presented unital algebras
\(
	\Ring_K^\fp
\) by \cite[corollary 6.3.5]{kashiwara-schapira}, so
\(
	\mathbf P_K
\) may be identified with the category of all functors
\(
	\Ring_K \to \Set
\) preserving direct limits. For any unital algebra \(R\) over \(K\) consider the ``evaluation'' functor
\[
	\ev_R \colon \mathbf P_K \to \Set,\,
	F \mapsto \varinjlim_{R' \to R} F(R'),
\]
where \(R'\) runs over all finitely presented unital algebras with homomorphisms to \(R\) (if \(R\) is already finitely presented, then
\(
	\ev_R(F) = F(R)
\)). Composing the induced functors
\[
	\Ind(\mathbf P_K) \to \Ind(\Set),
\quad
	\Ex(\Ind(\mathbf P_K)) \to \Ex(\Ind(\Set))
\]
with the evaluation functors
\[
	\Ind(\Set) \to \Set,
\quad
	\Ex(\Ind(\Set)) \to \Set
\]
we obtain two functors
\[
	\ev_R \colon \Ind(\mathbf P_K) \to \Set,
\quad
	\ev_R \colon \Ex(\Ind(\mathbf P_K)) \to \Set.
\]

Recall that an object \(X\) in a regular category
\(
	\mathbf C
\) is called \textit{projective} \cite[definition before lemma A1.3.8]{elephant} if every regular epimorphism to \(X\) has a section. Equivalently, \(X\) is projective if and only if
\(
	f_* \colon \mathbf C(X, Y) \to \mathbf C(X, Z)
\) is surjective for all regular epimorphisms
\(
	f \colon Y \to Z
\). If \(X\) is projective in
\(
	\mathbf C
\), then it is also projective in
\(
	\Ind(\mathbf C)
\) since every regular epimorphism in the ind-completion is just an inverse system of regular epimorphisms in
\(
	\mathbf C
\) up to an isomorphism and a morphism from \(X\) to an ind-object is given by a morphism to some its member. Also, if \(X\) is projective in
\(
	\mathbf C
\), then it is also projective in
\(
	\Ex(\mathbf C)
\) since any regular epimorphism
\(
	Y / {\equiv_Y} \to X
\) is given by a regular epimorphism
\(
	Y \to X
\) satisfying the extra condition of
\(
	\equiv_Y
\)-invariance.

All affine schemes of finite presentation are projective objects in
\(
	\mathbf P_K
\) by Yoneda lemma. It follows that they are also projective in
\(
	\Ind(\mathbf P_K)
\) and
\(
	\Ex(\Ind(\mathbf P_K))
\). From this and the definitions it easily follows that
\[
	\ev_R
	\cong \Ex(\Ind(\mathbf P_K))\bigl(
		\Spec(R),
		{-}
	\bigr)
\]
if \(R\) is finitely presented. For arbitrary \(R\) we have
\[
	\ev_R(X) \cong \varinjlim_{R' \to R} \ev_{R'}(X),
\]
where \(R'\) runs over finitely presented algebras with homomorphisms to \(R\). In particular,
\[
	\ev_K(X) \cong \Ex(\Ind(\mathbf P_K))(\Spec(K), X)
\]
is the set of global elements of \(X\) since
\(
	\Spec(K)
\) is the terminal object.

We have the following commutative diagram of positive categories and coherent functors (up to natural isomorphisms, but they may be chosen to be the identities).
\[\xymatrix@R=45pt@C=120pt@M=6pt@!0{
&
	\mathbf P_K
	\ar_{\ev_R}[dl]
	\ar@{^{(}->}[d]
	\ar@{^{(}->}[r]
&
	\Pro(\mathbf P_K)
	\ar@{^{(}->}[d]
\\
	\Set
&
	\Ind(\mathbf P_K)
	\ar_{\ev_R}[l]
	\ar@{^{(}->}[d]
	\ar@{^{(}->}[r]
&
	\Ind(\Pro(\mathbf P_K))
	\ar@{^{(}->}[d]
\\ &
	\Ex(\Ind(\mathbf P_K))
	\ar_{\ev_R}[ul]
	\ar@{^{(}->}[r]
&
	\mathbf U_K
}\]
The functors denoted by \(\hookrightarrow\) are fully faithful. All functors in this diagram are geometric except
\[
	\Set
	\hookrightarrow \Pro(\mathbf P)
	\hookrightarrow \Ind(\Pro(\Set)),
\quad
	\Set \hookrightarrow \Ind(\Set).
\]

The diagram is functorial on \(K\). Namely, for every unital \(K\)-algebra \(R\) there exists a functor
\[
	({-})_R \colon \mathbf P_K \to \mathbf P_R
\]
such that
\(
	(F_R)(S) = \ev_S(F)
\) for all
\(
	S \in \Ring_R^\fp
\), in particular,
\(
	\Spec(T)_R \cong \Spec(T \otimes_K R)
\) for any
\(
	T \in \Ring_K^\fp
\). This functor is geometric and induces geometric base change functors
\(
	({-})_R
\) between the categories from the diagram above (except \(\Set\)), though
\(
	({-})_R
	\colon \Pro(\mathbf P_K)
	\to \Pro(\mathbf P_R)
\) is only coherent. If \(S\) is any unital \(R\)-algebra, then
\[
	{\ev_S} \circ ({-})_R
	\cong \ev_S
	\colon \mathbf U_K
	\to \Set,
\quad
	(({-})_R)_S
	\cong ({-})_S
	\colon \mathbf U_K
	\to \mathbf U_S.
\]

\begin{lemma} \label{fp-change}
	If \(R\) is a unital \(K\)-algebra of finite presentation, then the the category
	\(
		\mathbf U_R
	\) is naturally equivalent to the slice category
	\(
		\mathbf U_K / \Spec(R)
	\) and similarly for its subcategories from the diagram above. The base change functor is just
	\[
		({-})_R \cong ({-}) \times \Spec(R)
		\colon \mathbf U_K
		\to \mathbf U_K / \Spec(R)
	\]
	and it has a faithful right adjoint, namely, the forgetful functor
	\(
		\mathbf U_K / \Spec(R) \to \mathbf U_K
	\).
\end{lemma}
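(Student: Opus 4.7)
My plan is to reduce the statement to the classical topos-theoretic slicing equivalence, then propagate it through the three completions $\Pro$, $\Ind$, and $\Ex$. For the base case, since $R$ is of finite presentation over $K$ the category $\Ring_R^\fp$ is equivalent to the category of objects of $\Ring_K^\fp$ equipped with a morphism from $R$, and the standard presheaf-slicing formula then gives
\[
	\mathbf P_K / \Spec(R)
	= \Cat(\Ring_K^\fp, \Set) / \Spec(R)
	\simeq \Cat(\Ring_R^\fp, \Set)
	= \mathbf P_R.
\]
Explicitly, an object $G \to \Spec(R)$ is sent to the $\mathbf P_R$-object $S \mapsto G(S) \times_{\Spec(R)(S)} \{R \to S\}$ for $S \in \Ring_R^\fp$. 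Applied to $G = F \times \Spec(R)$ with its second projection this yields $S \mapsto F(S) = \ev_S(F) = (F_R)(S)$, verifying $({-})_R \cong ({-}) \times \Spec(R)$ already at the presheaf level.

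Next, I would show that each completion commutes with slicing over an object of the base category, i.e.\ $\Pro(\mathbf C) / X \simeq \Pro(\mathbf C / X)$, $\Ind(\mathbf D) / X \simeq \Ind(\mathbf D / X)$, and $\Ex(\mathbf E) / X \simeq \Ex(\mathbf E / X)$. For $\Pro$, the key observation is that a pro-morphism from $Y = (Y_i)_{i \in \mathbf I^\op}$ to a constant pro-object $X$ is represented, via the $\varprojlim\varinjlim$ Hom-formula, by a single morphism $Y_{i_0} \to X$ for some $i_0$; restricting to the cofinal subsystem indexed by $i \geq i_0$ equips each $Y_i$ with a canonical morphism to $X$ obtained by composition with the transition maps, so $Y$ becomes a pro-object in $\mathbf C / X$. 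The same Hom-formula confirms both essential surjectivity and full faithfulness of the comparison $\Pro(\mathbf C / X) \to \Pro(\mathbf C) / X$; the $\Ind$ case is dual. For $\Ex$, the explicit description recalled before lemma~\ref{ex-compl} shows that a morphism $Y / {\equiv_Y} \to X$ (where $X$ carries the trivial equivalence) is the same datum as a $\equiv_Y$-invariant morphism $Y \to X$ in $\mathbf E$, and $\Ex$-equivalence relations on $(Y \to X) / {\equiv_Y}$ in the slice correspond to equivalence relations in $\mathbf E / X$ containing $\equiv_Y$. Concatenating the three equivalences yields $\mathbf U_K / \Spec(R) \simeq \mathbf U_R$, and the same argument at each intermediate layer gives the analogous equivalences for $\Ind(\mathbf P_K)$, $\Pro(\mathbf P_K)$, $\Ex(\Ind(\mathbf P_K))$, and $\Ind(\Pro(\mathbf P_K))$. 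The identification $({-})_R \cong ({-}) \times \Spec(R)$ at each stage descends from the presheaf level by functoriality of the completions.

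Finally, the forgetful functor $\mathbf U_K / \Spec(R) \to \mathbf U_K$ is faithful by construction, and it forms an adjoint pair with $({-}) \times \Spec(R)$: by the universal property of the product, a morphism $(Y \to \Spec(R)) \to (F \times \Spec(R) \to \Spec(R))$ in the slice is the same datum as a morphism $Y \to F$ in $\mathbf U_K$. The main obstacle I anticipate is the technical verification that slicing commutes with $\Pro$, where one must carefully unwind the double-limit Hom-formula to establish both essential surjectivity and full faithfulness of the comparison functor; the $\Ex$ step is similarly delicate, requiring one to reconcile the functional-relation description of morphisms with the slice-category bookkeeping of equivalence relations, but it is a direct unwinding of the explicit construction recalled before lemma~\ref{ex-compl}.
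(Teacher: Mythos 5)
Your proposal is correct and follows essentially the same route as the paper: the presheaf-level slicing equivalence followed by the commutation of slicing over an object of the base category with \(\Pro\), \(\Ind\) and \(\Ex\), where the paper simply cites Kashiwara--Schapira and the Elephant for the steps you sketch by hand (your ``cofinal subsystem \(i \geq i_0\)'' should really be the comma category \(i_0/\mathbf I_Y\) to handle parallel morphisms, but that is the standard argument behind the cited result). One small remark: your final hom-set computation \(\mathbf U_K/\Spec(R)\bigl(Y, F\times\Spec(R)\bigr)\cong\mathbf U_K(Y,F)\) exhibits the forgetful functor as a \emph{left} adjoint of \(({-})\times\Spec(R)\) (which is the adjunction actually used later, e.g.\ in proposition \ref{local-action}), so the word ``right'' in the statement should not be read literally --- the paper's own proof does not address the adjunction claim at all.
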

\begin{proof}
	Indeed, there are the following facts.
	\begin{itemize}

		\item
		If
		\(
			\mathbf C
		\) is a small category and
		\(
			X \in \mathbf C
		\) is its object, then
		\[
			\Cat((\mathbf C / X)^\op, \Set)
			\sim \Cat(\mathbf C^\op, \Set) / \mathbf y(X)
		\]
		by \cite[lemma 1.4.12]{kashiwara-schapira}, where
		\(
			\mathbf y
			\colon \mathbf C
			\to \Cat(\mathbf C^\op, \Set)
		\) is the Yoneda embedding. It follows that
		\[
			\mathbf P_R \sim \mathbf P_K / \Spec(R)
		\]
		since the Yoneda embedding in this case is just
		\(
			\Spec({-})
		\) (recall that we identify schemes and the corresponding presheaves).

		\item
		If
		\(
			\mathbf C
		\) is any category with an object \(X\), then the canonical functor
		\(
			\Pro(\mathbf C / X) \to \Pro(\mathbf C) / X
		\) is an equivalence. Indeed, any morphism in
		\(
			\Pro(\mathbf C)
		\) to \(X\) is isomorphic to the inverse limit of morphisms to \(X\) \cite[corollary 6.1.14]{kashiwara-schapira}, so the functor is essentially surjective. It is easy to check that the functor is also fully faithful. It follows that
		\[
			\Pro(\mathbf P_R)
			\sim \Pro(\mathbf P_K) / \Spec(R).
		\]

		\item
		Similarly, for any category
		\(
			\mathbf C
		\) and its object \(X\) the canonical functor
		\(
			\Ind(\mathbf C / X) \to \Ind(\mathbf C) / X
		\) is an equivalence, so
		\[
			\Ind(\mathbf P_R)
			\sim \Ind(\mathbf P_K) / \Spec(R),
		\quad
			\Ind(\Pro(\mathbf P_R))
			\sim \Ind(\Pro(\mathbf P_K)) / \Spec(R).
		\]

		\item
		The properties of a category
		\(
			\mathbf C
		\) to be regular, Barr exact, positive, infinitary positive, a pretopos, or an infinitary pretopos are preserved by passing to a slice category
		\(
			\mathbf C / X
		\) \cite[remarks after lemma A1.3.3, before example A1.3.7, and after corollary A1.4.4]{elephant}. Namely, equalizers and fiber products in
		\(
			\mathbf C / X
		\) are the same as in
		\(
			\mathbf C
		\), binary products are the fiber products over \(X\), the terminal object is just \(X\). Regular epimorphisms, images, subobject (semi-)lattices, and factor-object by equivalence relations are the same as in
		\(
			\mathbf C
		\). It is easy to see that if
		\(
			\mathbf C
		\) is regular and \(X\) is any object, then
		\(
			\Ex(\mathbf C / X) \sim \Ex(\mathbf C) / X
		\). Thus
		\[
			\Ex(\Ind(\mathbf P_R))
			\sim \Ex(\Ind(\mathbf P_K)) / \Spec(R),
		\quad
			\mathbf U_R
			\sim \mathbf U_K / \Spec(R). \qedhere
		\]

	\end{itemize}
\end{proof}

\subsection{Actions of groups and rings}

We also need several algebraic notions making sense in infinitary pretopoi.

A group object \(G\) \textit{acts} on an object \(X\) in a category
\(
	\mathbf C
\) with finite products if there is a morphism
\(
	\up{({-})}{({=})} \colon G \times X \to X
\) such that
\(
	\up {g g'} x = \up g {(\up {g'} x)}
\) and
\(
	\up 1 x = x
\). If \(X\) is also a group object, we usually impose the additional axiom
\(
	\up g {(x x')} = \up g x \up g {x'}
\). In this case the \textit{semi-direct product}
\(
	X \rtimes G
\) is also a group object, it is the product of \(X\) and \(G\) in
\(
	\mathbf C
\) with the multiplication
\[
	(x \rtimes g) (x' \rtimes g')
	= (x\, \up g {x'}) \rtimes g g'.
\]
A \textit{crossed module} is a homomorphism
\(
	\delta \colon X \to G
\) of group objects together with an action of \(G\) on \(X\) (by group automorphisms) such that
\(
	\delta(\up g x) = \up g {\delta(x)}
\) and
\(
	\up x y = \up {\delta(x)} y
\) for
\(
	x, y \colon X
\) and
\(
	g \colon G
\).

Similarly, a unital ring object \(R\) \textit{acts} on a ring object \(S\) in a category
\(
	\mathbf C
\) with finite products if there is a biadditive multiplication morphism
\(
	R \times S \to S
\) such that
\(
	r (s s') = (r s) s'
\),
\(
	1 s = s
\), and
\(
	(r r') s = r (r' s)
\). In this case the \textit{semi-direct product}
\(
	S \rtimes R
\) is also a unital ring object, it is the object
\(
	S \times R
\) of
\(
	\mathbf C
\) with the operations
\[
	(s \rtimes r) + (s' \rtimes r')
	= (s + s') \rtimes (r + r'),
\quad
	(s \rtimes r) (s' \rtimes r')
	= (s s' + r s' + r' s) \rtimes r r'.
\]
A homomorphism
\(
	\delta \colon S \to R
\) in
\(
	\mathbf C
\) from a ring object to a unital ring object is called a \textit{ring crossed module} if \(R\) acts on \(S\),
\(
	\delta(r s) = r \delta(s)
\), and
\(
	s s' = \delta(s) s'
\). Clearly, actions of \(R\) on a unital ring \(S\) are in a one-to-one correspondence with unital homomorphisms
\(
	f \colon R \to S
\), namely,
\(
	r s = f(r) s
\) and
\(
	f(r) = r 1_S
\) for
\(
	r \colon R
\) and
\(
	s \colon S
\).

\begin{lemma} \label{gen-rel}
	Let
	\(
		\mathbf C
	\) be an infinitary pretopos.
	\begin{enumerate}

		\item
		If \(X\) is any object, then
		\(
			\bigsqcup_{n = 0}^\infty
				X^{\times n}
		\) (the object of formal strings on \(X\)) is the free monoid on \(X\), i.e. every morphism
		\(
			X \to M
		\) to a monoid object in
		\(
			\mathbf C
		\) continues to a unique homomorphism
		\(
			\bigsqcup_{n = 0}^\infty
				X^{\times n}
			\to M
		\).

		\item
		Let \(X\) be an object,
		\(
			X^{-1}
		\) be its copy with an isomorphism
		\(
			({-})^{-1} \colon X \to X^{-1}
		\),
		\[
			M
			= \bigsqcup_{n = 0}^\infty
				(X \sqcup X^{-1})^{\times n}
			= \bigsqcup_{n = 0}^\infty
				\bigsqcup_{
					\varepsilon_1,
					\ldots,
					\varepsilon_n
					\in \{-1, 1\}
				}
					\prod_{i = 1}^n
						X^{\varepsilon_i}
		\]
		be the free monoid on
		\(
			X \sqcup X^{-1}
		\), and
		\(
			R \subseteq M \times M
		\) be a subobject with the components
		\[
			R_{\vec \varepsilon, \vec \eta}
			\subseteq \prod_{i = 1}^n
				X^{\varepsilon_i}
			\times \prod_{j = 1}^m
				X^{\eta_j}
		\]
		for all
		\(
			n, m \in \mathbb N_0
		\) and
		\(
			\varepsilon_i, \eta_j \in \{-1, 1\}
		\). Consider the equivalence relation \(E\) on \(M\) generated by
		\[
			\bigl[\!\!\!\:\bigl[
				\exists u^M, z^M, w^M, v^M
					\bigl(
						l = u z v
						\wedge r = u w v
						\wedge R(z, w)
					\bigr)
			\bigr]\!\!\!\:\bigr]_{l^M, r^M}
		\]
		and by
		\[
			\bigl[\!\!\!\:\bigl[
				\exists u^M, x^X, v^M
					\bigl(
						l
							= u
							x^\varepsilon
							x^{-\varepsilon}
							v
						\wedge r = u v
					\bigr)
			\bigr]\!\!\!\:\bigr]_{l^M, r^M}
		\]
		for
		\(
			\varepsilon \in \{-1, 1\}
		\). Then
		\(
			\langle X \mid R \rangle = M / E
		\) is the group object generated by \(X\) with relations \(R\), i.e. for any group object \(H\) and a morphism
		\(
			f \colon X \to H
		\) such that
		\[
			R_{\vec \varepsilon, \vec \eta}(
				x_1^{\varepsilon_1},
				\ldots,
				x_n^{\varepsilon_n};
				y_1^{\eta_1},
				\ldots,
				y_m^{\eta_m}
			)
			\vdash \prod_{i = 1}^n
				f(x_i^{\varepsilon_i})
			= \prod_{j = 1}^m
				f(y_j^{\varepsilon_j})
		\]
		holds for all
		\(
			\vec \varepsilon
		\) and
		\(
			\vec \eta
		\) there is a unique homomorphism
		\(
			M / E \to H
		\) continuing \(f\).

		\item
		Let
		\(
			G = \langle X \mid R \rangle
		\) be a group object given by generators and relations, \(H\) be a group object, and \(P\) be an object. Let also
		\(
			f \colon P \times X \to H
		\) be a morphism such that
		\[
			R_{\vec{\varepsilon}, \vec \eta}(
				x_1^{\varepsilon_1},
				\ldots,
				x_n^{\varepsilon_n};
				y_1^{\eta_1},
				\ldots,
				y_m^{\eta_m}
			)
			\vdash \prod_{i = 1}^n
				f(p, x_i)^{\varepsilon_i}
			= \prod_{j = 1}^n
				f(p, y_j)^{\eta_j}
		\] holds for all
		\(
			\vec \varepsilon
		\) and
		\(
			\vec \eta
		\). Then \(f\) continues to a unique morphism
		\(
			f' \colon P \times G \to H
		\) multiplicative on the second argument.

		\item
		Let
		\(
			G = \langle X \mid R \rangle
		\) be a group object given by generators and relations, \(P\) be any object, and
		\(
			f \colon X \times P \to P
		\) be a morphism with an inverse
		\(
			f \colon X^{-1} \times P \to P
		\) (necessarily unique), i.e.
		\[
			f(x, f(x^{-1}, p))
			= p
			= f(x^{-1}, f(x, p)),
		\]
		and such that
		\[
			R_{\vec \varepsilon}(
				x_1^{\varepsilon_1},
				\ldots,
				x_n^{\varepsilon_n};
				y_1^{\eta_1},
				\ldots
				y_m^{\eta_m}
			)
			\vdash f(
				x_1^{\varepsilon_1},
				\ldots f(x_n^{\varepsilon_n}, p) \ldots
			)
			= f(
				y_1^{\eta_1},
				\ldots f(y_m^{\eta_m}, p) \ldots
			)
		\]
		holds for all
		\(
			\vec \varepsilon
		\) and
		\(
			\vec \eta
		\). Then \(f\) continues to a unique action of \(G\) on \(P\). If \(P\) is a group object and
		\[
			f(x, p_1 p_2) = f(x, p_1)\, f(x, p_2),
		\]
		then \(G\) acts on \(P\) by group automorphisms.

	\end{enumerate}
\end{lemma}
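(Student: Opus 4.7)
The plan is to prove all four parts by carrying out standard free-algebra constructions inside the infinitary pretopos $\mathbf{C}$, justifying each step with the geometric logic set up above.

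First I would handle part (1). The distributive law in an infinitary positive category gives a canonical isomorphism $\bigl(\bigsqcup_n X^{\times n}\bigr) \times \bigl(\bigsqcup_m X^{\times m}\bigr) \cong \bigsqcup_{n,m} X^{\times(n+m)}$, and the obvious concatenation maps $X^{\times n} \times X^{\times m} \to X^{\times(n+m)}$ assemble into a multiplication on $M = \bigsqcup_n X^{\times n}$; associativity and the unit axiom are checked one summand at a time. Given any morphism $f \colon X \to N$ into a monoid object, iterated multiplication yields morphisms $X^{\times n} \to N$ that assemble into a homomorphism, and uniqueness is immediate since any monoid homomorphism out of $M$ is determined on generators.

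For part (2), the main construction, I would equip $M = \bigsqcup_n (X \sqcup X^{-1})^{\times n}$ with the free monoid structure from part (1) and form $M/E$ using the fact that factor-objects by equivalence relations exist in an infinitary pretopos. The crucial observation is that both families of generators of $E$ are \emph{already} closed under the contextual operation $(z,w) \mapsto (uzv, uwv)$, so by the geometric-logic bookkeeping of Section \ref{categorical-logic} the generated equivalence relation $E$ is automatically a two-sided congruence for multiplication; hence $M/E$ inherits a monoid structure. To produce inverses, I would define a ``reverse-and-invert'' morphism $\iota \colon M \to M$ summand by summand and use the second family of generators to see that it descends to an endomorphism of $M/E$. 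Then the subobject $[\![m \cdot \iota(m) = 1]\!]_{m^{M/E}} \subseteq M/E$ contains the image of $X \sqcup X^{-1}$ (directly from the second family) and is a submonoid (by a short geometric-logic calculation), so it equals $M/E$ because the latter is generated as a monoid by that image; the symmetric identity is analogous. The universal property is then routine: a morphism $f \colon X \to H$ into a group object extends to $X \sqcup X^{-1} \to H$ by $x^{-1} \mapsto f(x)^{-1}$, then to $M \to H$ by part (1), and the relation hypothesis is precisely the assertion that this extension factors through $E$.

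Finally, for parts (3) and (4) I would pass to the slice category $\mathbf{C}/P$, which is again an infinitary pretopos by the facts recorded in the proof of Lemma \ref{fp-change}. Pullback along $P \to 1$ sends $X$ to $P \times X$, the relation $R$ to $P \times R$, and the group object $P \times H$ to a group object over $P$; crucially, the construction of part (2) is built from coproducts and quotients by equivalence relations, hence commutes with this base change, so $P \times G$ is the group object $\langle P \times X \mid P \times R \rangle$ in $\mathbf{C}/P$. Part (3) is then its universal property applied to $P \times H$ in $\mathbf{C}/P$, and part (4) is a direct replay of the same argument with $P \times P \to P$ in place of a group multiplication, using the given two-sided inverse of $f$ to handle the generators $X^{-1}$; the multiplicativity hypothesis in part (4) is preserved at every stage. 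The main obstacle I expect is the congruence verification: one must carefully check in the internal language that the contextually generated relation $E$ really is stable under multiplication and that $\iota$ descends to $M/E$ and is genuinely an inversion on all of $M/E$. These reduce to short derivations in geometric logic, but they carry the full weight of the lemma.
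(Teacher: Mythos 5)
Your overall route is the same as the paper's: the paper's proof of this lemma is literally ``carry out the usual \(\Set\)-style constructions, but in the internal geometric logic, and for (3) pass to the slice category \(\mathbf C / P\)'', and your treatment of (1), (3), (4) and of the congruence and universal-property parts of (2) fills this in correctly. In particular, the observation that both generating families of \(E\) are already stable under the contextual operation \((z, w) \mapsto (u z v, u w v)\), so that the explicitly generated equivalence relation is a two-sided congruence expressible by a geometric formula, is exactly the right bookkeeping, and the base-change argument for (3)--(4) (the construction of \(\langle X \mid R \rangle\) is built from coproducts, finite limits, images and quotients, all preserved by \(P^* \colon \mathbf C \to \mathbf C / P\)) is the intended use of lemma \ref{fp-change}.

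The one step that does not work as you justify it is the descent of the reverse-and-invert map \(\iota \colon M \to M\) to \(M / E\). You claim the second family of generators suffices, but descent requires \((\iota \times \iota)(E) \subseteq E\), hence compatibility with the first family as well: applying \(\iota \times \iota\) to an \(R\)-generator \((u z v, u w v)\) yields \(\bigl(\iota(v)\, \iota(z)\, \iota(u), \iota(v)\, \iota(w)\, \iota(u)\bigr)\), which is not an instance of the generators, and knowing it lies in \(E\) amounts to knowing \(\iota(z) \mathrel E \iota(w)\) --- essentially the statement that \(\iota\) represents inversion in \(M / E\), i.e. the very thing being established. The repair is routine and stays inside your framework: either first prove, by the same ``submonoid containing the generators'' induction carried out upstairs in \(M\), that the image of \(\iota(m)\) is a two-sided inverse of the image of \(m\) in the monoid \(M / E\), and then deduce the descent from uniqueness of inverses in a monoid; or dispense with \(\iota\) altogether, note that the subobject of elements of \(M / E\) admitting a two-sided inverse is a submonoid containing the image of \(X \sqcup X^{-1}\) and hence is everything, and obtain the inversion morphism from the total, single-valued relation \([\![\, m n = 1 \wedge n m = 1 \,]\!]\) via the correspondence between functional relations and morphisms recalled in \S \ref{categorical-logic}. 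With that local fix your argument is complete and coincides with the proof the paper intends.
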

\begin{proof}
	All these claims may be proved in the same way as for \(\Set\), but using the formal geometric logic. In (3) we may actually assume that \(P\) is the terminal object by passing to the slice category
	\(
		\mathbf C / P
	\).
\end{proof}

\subsection{Co-localization and formal localization}

Take a unital ring \(K\) and a reductive group scheme \(G\) over \(K\) of local isotropic rank
\(
	\geq 3
\). We are going to construct a group object
\(
	\stlin_G \in \mathbf U_K
\) by generators and relations and then show that it actually lies in
\(
	\Ex(\Ind(\mathbf P_K))
\), so
\(
	\stlin_G(K) = \ev_K(\stlin_G) \in \Group
\) is an ordinary group. The whole Steinberg group functor will be given by
\(
	\stlin_G({-}) = \ev_{(-)}(\stlin_G)
\).

Let
\(
	s \in K
\) and
\(
	(T, \Phi)
\) be an isotropic pinning of
\(
	G_{K_s}
\). Denote by \(\R\) the unital ring object
\(
	\mathbb A^1_K \in \mathbf P_K
\), i.e.
\(
	\R(R) = R
\) for any unital \(K\)-algebra \(R\). Following \cite[\S 4]{isotropic-elem} for any unital \(K\)-algebra \(R\) let
\[
	R^{(s)} = \{a^{(s)} \mid a \in R\}
\]
be the ring crossed module over \(R\) with
\[
	a^{(s)} + b^{(s)} = (a + b)^{(s)},
\quad
	a^{(s)} b^{(s)} = (a s b)^{(s)},
\quad
	r a^{(s)} = (r a)^{(s)},
\quad
	\delta(a^{(s)}) = s a
\]
(the \textit{\(s\)-homotope} of \(R\)). Clearly, they are the values of the corresponding crossed module
\(
	\R^{(s)} \in \mathbf P_K
\) over \(\R\) (also isomorphic to
\(
	\mathbb A^1_K
\) as a scheme). object The \textit{co-localization} of \(\R\) at \(s\) is the inverse system
\[
	\R^{(s^\infty)}
	= \bigl(
		\ldots \to \R^{(s^2)} \to \R^{(s)} \to \R
	\bigr)
	\in \Pro(\mathbf P_K),
\]
where the structure maps are given by
\(
	a^{(s^{k + 1})} \mapsto (s a)^{(s^k)}
\). Since the structure maps are homomorphisms of ring crossed modules, the co-localization is a ring crossed module over \(\R\) inside the category
\(
	\Pro(\mathbf P_K)
\).

On the other hand we have the formal localization
\[
	\textstyle
	\R_s^{\Ind}
	= (
		\R \to \frac \R s \to \frac \R {s^2} \to \ldots
	)
	\in \Ind(\mathbf P_K).
\]
Here
\(
	(\frac \R {s^n})(R)
	= \{\frac r {s^n} \mid r \in R\}
\) are the sets of formal fractions for any unital \(K\)-algebra \(R\), so
\(
	\frac \R {s^n}
\) are \(\R\)-modules with
\(
	\frac p {s^n} + \frac q {s^n} = \frac{p + q}{s^n}
\) and
\(
	r \frac p {s^n} = \frac{r p}{s^n}
\), and the structure maps are
\(
	\frac p {s^n} \mapsto \frac{s p}{s^{n + 1}}
\) (each of them may be considered as a definition of the morphism in
\(
	\mathbf P_K
\) by the corresponding term or as a family of maps
\(
	\bigl(\frac \R {s^n}\bigr)(R)
	\to \bigl(\frac \R {s^{n + 1}}\bigr)(R)
\) for all unital algebras \(R\) of finite presentation). The formal localization is a unital ring object with the multiplication
\(
	\frac p {s^n} \frac q {s^m}
	= \frac{p q}{s^{n + m}}
\) and
\(
	\R \to \R^{\Ind}_s
\) is a unital homomorphism. It is easy to check that \(
	\ev_R(\R^{\Ind}_s) \cong R_s
\) for any unital \(K\)-algebra \(R\).

It turns out that
\(
	\R^{(s^\infty)}
\) is a ring crossed module over
\(
	\R^{\Ind}_s
\) with the action
\(
	\frac p {s^n} a^{(s^m)} = (p a)^{(s^{m - n})}
\) and the composite structure homomorphism
\(
	\delta
	\colon \R^{(s^\infty)}
	\to \R
	\to \R_s^{\Ind}
\) inside
\(
	\Ind(\Pro(\mathbf P_K))
\). The action of \(\R\) on
\(
	\R^{(s^\infty)}
\) is the restriction of this new action.

The ring crossed module
\(
	\R^{(s^\infty)} \to \R_s^{\Ind}
\) is preserved under base changes
\(
	\mathbf U_K \to \mathbf U_R
\). Also,
\(
	\R^{(s^\infty)} \to \R_s^{\Ind}
\) is functorial on \(s\) in the following sense. Take
\(
	t \in K
\). We have homomorphisms
\[
	t^*
	\colon \R^{((t s)^\infty)}
	\to \R^{(s^\infty)},
\quad
	t_*
	\colon \R_s^{\Ind}
	\to \R_{t s}^{\Ind}
\]
of ring objects and unital ring objects given by
\(
	t^*(a^{((t s)^n)}) = (t^n a)^{(s^n)}
\) and
\(
	t_*(\frac p {s^n}) = \frac{t^n p}{(t s)^n}
\), they satisfy the identities
\[
	p\, t^*(a)
	= t^*\bigl(t_*(p)\, a\bigr),
\quad
	(t_* \circ \delta \circ t^*)(a) = \delta(a)
\]
and are also preserved under base changes. Of course,
\(
	1^*
\) and
\(
	1_*
\) are the identity morphisms,
\(
	(t u)^* = u^* \circ t^*
\), and
\(
	(t u)_* = t_* \circ u_*
\).

If
\(
	t s = t' s
\), then
\(
	t^* = {t'}^*
\) and
\(
	t_* = t'_*
\), so these morphisms depend only on the elements \(s\) and
\(
	t s
\). For any
\(
	k \in \mathbb N_+
\) the morphisms
\(
	(s^{k - 1})^*
	\colon K^{((s^k)^\infty)}
	\to K^{(s^\infty)}
\) and
\(
	(s^{k - 1})_*
	\colon K_s^{\Ind}
	\to K_{s^k}^{\Ind}
\) are invertible with the inverses given by the identity maps
\(
	a^{(s^{k n})} \mapsto a^{(s^{k n})}
\) and
\(
	\frac p {s^{k n}} \mapsto \frac p {s^{k n}}
\) respectively. It follows that up to canonical isomorphisms the objects
\(
	\R^{(s^\infty)}
\) and
\(
	\R_s^{\Ind}
\) depend only on the open subscheme
\(
	\mathcal D(s) \subseteq \Spec(K)
\) instead of the element \(s\).

\subsection{\(\mathbf U_K\)-points of affine schemes}

Recall that any pointed affine \(K_s\)-scheme \(X\) of finite presentation is isomorphic to
\[
	\Spec\bigl(
		K_s[x_1, \ldots, x_n]
		/ (f_1, \ldots, f_m)
	\bigr)
\]
for
\(
	f_j \in K_s[\vec x]
\) with
\(
	f_j(\vec 0) = 0
\). Indeed, \(X\) has such a presentation possibly without the last condition. Since it is pointed, there is a homomorphism of \(K\)-algebras
\(
	K_s[\vec x] / (\vec f) \to K_s
\) given by
\(
	x_i \mapsto a_i \in K_s
\). Clearly,
\(
	f_j(\vec a) = 0
\), so the change of variables
\(
	x'_i = x_i - a_i
\) gives the required presentation. Any morphism
\(
	\varphi \colon X \to Y
\) between such schemes
\[
	X
	\cong \Spec\bigl(
		K_s[x_1, \ldots, x_n]
		/ (f_1, \ldots, f_m)
	\bigr),
\quad
	Y
	\cong \Spec\bigl(
		K_s[y_1, \ldots, y_{n'}]
		/ (g_1, \ldots, g_{m'})
	\bigr)
\]
preserving the distinguished section is determined by the inverse images \(h_i\) of \(y_i\), i.e. by a family
\(
	h_i \in K_s[\vec x] / (\vec f)
\) such that
\(
	h_i(\vec 0) = 0
\) and
\(
	g_j(\vec h) = 0
\).

For any
\(
	\R_s^{\Ind}
\)-algebra \(\mathcal A\) in
\(
	\mathbf U_K
\) (i.e. a ring object \(\mathcal A\) with an action of
\(
	\R_s^{\Ind}
\)) and a pointed affine \(K_s\)-scheme \(X\) of finite presentation we may construct the object
\[
	X(\mathcal A)
	= [\![
		f_1(\vec a) = 0
		\wedge \ldots
		\wedge f_m(\vec a)
		= 0
	]\!]_{a_1^{\mathcal A}, \ldots, a_n^{\mathcal A}}
\]
of \(\mathcal A\)-points of \(X\). Clearly, it is functorial on \(\mathcal A\). On the other hand,
\(
	X(\mathcal A)
\) is functorial on \(X\), since any scheme morphism
\(
	\varphi \colon X \to Y
\) given by
\(
	h_i \in K_s[\vec x] / (\vec f)
\) as above induces a morphism
\[
	[\![
		(h_1(\vec a), \ldots, h_{n'}(\vec a))
	]\!]_{\vec a^{X(\mathcal A)}}
	\colon X(\mathcal A)
	\to Y(\mathcal A).
\]
This construction preserves finite limits on each argument, so it also preserves monomorphisms. Also, the construction is preserved under base changes, i.e. if \(R\) is a unital \(K\)-algebra, then
\[
	X(\mathcal A)_R
	\cong X_{R_s}(\mathcal A_R)
	\in \mathbf U_R
\]
in a natural way.

Now let \(X\) be a group scheme (e.g. the reductive group scheme
\(
	G_{K_s}
\) or one of
\(
	P_\alpha
\)). It is pointed with the distinguished unit section. Consider the diagram of ring objects
\[\xymatrix@R=45pt@C=120pt@M=6pt@!0{
	\R^{(s^\infty)}
	\ar@/_12pt/^(0.45){({-}) \rtimes 0}[r]
&
	\R^{(s^\infty)} \rtimes \R_s^{\Ind}
	\ar@/^12pt/^(0.55){({=})}[r]
	\ar@/_12pt/_(0.55){\delta({-}) + ({=})}[r]
&
	\R_s^{\Ind}.
	\ar_(0.45){0 \rtimes ({-})}[l]
}\]
Here
\(
	0 \rtimes ({-})
\) is a common section of two morphisms and
\(
	({-}) \rtimes 0
\) is the kernel of the projection
\(
	({=})
\) on the second factor. It follows that the image of this diagram under
\(
	X({-})
\) has the same properties, in particular,
\[
	X(\R^{(s^\infty)} \rtimes \R_s^{\Ind})
	\cong X(\R^{(s^\infty)}) \rtimes X(\R_s^{\Ind})
\]
for some action of
\(
	X(\R_s^{\Ind})
\) on
\(
	X(\R^{(s^\infty)})
\) (induced by the conjugation on
\(
	X(\R^{(s^\infty)} \rtimes \R_s^{\Ind})
\)) and
\[
	X\bigl(\delta({-}) + ({=})\bigr)(g \rtimes h)
	= X(\delta)(g)\, h
\]
after this identification. The kernels of
\(
	({=})
\) and
\(
	\delta({-}) + ({=})
\) have trivial product, so their embeddings to
\(
	\R^{(s^\infty)} \rtimes \R_s^{\Ind}
\) continue to a unique homomorphism
\[
	\Ker({=})
		\times \Ker\bigl(\delta({-}) + ({=})\bigr)
	\to \R^{(s^\infty)} \rtimes \R_s^{\Ind}.
\]
Since a similar homomorphism exists after applying
\(
	X({-})
\), the kernels of
\(
	X({=})
\) and
\(
	X\bigl(\delta({-}) + ({=})\bigr)
\) commute in the group sense. It easily follows that
\[
	\delta
	\colon X(\R^{(s^\infty)})
	\to X(\R_s^{\Ind})
\]
is a crossed module.

This crossed module is extra-natural on \(s\), i.e. if
\(
	t \in K
\) is any element, then
\[
	(t_* \circ \delta \circ t^*)(x) = \delta(x)
	\colon X(\R_{s t}^{\Ind}),
\quad
	t^*\bigl(\up {t_*(g)} x\bigr)
	= \up g {\bigl(t^*(x)\bigr)}
	\colon X(\R^{(s^\infty)})
\]
for
\(
	x \colon X(\R^{((s t)^\infty)})
\),
\(
	g \colon X(\R_s^{\Ind})
\). The last identity may be proved by considering the composition
\[
	X(\R^{((s t)^\infty)}) \rtimes X(\R_s^{\Ind})
	\cong X\bigl(
		\R^{((s t)^\infty)} \rtimes \R_s^{\Ind}
	\bigr)
	\to X\bigl(
		\R^{(s^\infty)} \rtimes \R_s^{\Ind}
	\bigr)
	\cong X(\R^{(s^\infty)}) \rtimes X(\R_s^{\Ind}),
\]
where the middle morphism is induced by the homomorphism
\(
	t^*({-}) \rtimes ({=})
\) of ring objects. Then the composition is given by the same formula and since it is product-preserving, the action is extra-natural.

\subsection{Co-local Steinberg groups}

Now fix a unital ring \(K\) and a reductive group scheme \(G\) over \(K\) of local isotropic rank at least \(3\). Take \(s\) such that
\(
	G_{K_s}
\) has an isotropic pinning
\(
	(T, \Phi)
\) of rank at least \(3\). For any
\(
	\R_s^{\Ind}
\)-algebra
\(
	\mathcal A
\) in
\(
	\mathbf U_K
\) (e.g.
\(
	\R_s^{\Ind}
\) itself or
\(
	\R^{(s^\infty)}
\)) let
\[
	\stlin_{G, T, \Phi}(\mathcal A)
	\in \mathbf U_K
\]
be the group object generated by
\(
	\bigsqcup_{\alpha \in \Phi}
		P_\alpha(\mathcal A)
\), the generating morphisms
\(
	P_\alpha(\mathcal A)
	\to \stlin_{G, T, \Phi}(\mathcal A)
\) are denoted by
\(
	x_\alpha
\). The relations are
\begin{itemize}

	\item
	\(
		x_\alpha(p)\, x_\alpha(q)
		= x_\alpha(p \dotplus q)
	\) for
	\(
		p, q \colon P_\alpha(\mathcal A)
	\);

	\item
	\(
		x_\alpha(p) = x_{2 \alpha}(p)
	\) for
	\(
		p \colon P_{2 \alpha}(\mathcal A)
	\) if
	\(
		\alpha \in \Phi
	\) is ultrashort in a component of type
	\(
		\mathsf{BC}_\ell
	\);

	\item
	\(
		[x_\alpha(p), x_\beta(q)]
		= \prod_{
			\substack{
				i \alpha + j \beta \in \Phi
			\\
				i, j \in \mathbb N_+
			}
		}
			x_{i \alpha + j \beta}\bigl(
				f_{\alpha, \beta}
					^{i \alpha + j \beta}
					(p, q)
			\bigr)
	\) for
	\(
		p \colon P_\alpha(\mathcal A)
	\) and
	\(
		q \colon P_\beta(\mathcal A)
	\), where \(\alpha\) and \(\beta\) are not anti-parallel.

\end{itemize}
Clearly, there is a canonical homomorphism
\(
	\stmap
	\colon \stlin_{G, T, \Phi}(\mathcal A)
	\to G(\mathcal A)
\) such that
\(
	\stmap(x_\alpha(p)) = t_\alpha(p)
\) for
\(
	p \colon P_\alpha(\mathcal A)
\), its image is precisely the co-local elementary group
\(
	\elem_{G, T, \Phi}(\mathcal A)
\) from \cite[\S 5]{isotropic-elem}. In particular, we have group objects
\(
	\stlin_{G, T, \Phi}(\R^{(s^\infty)})
\) and
\(
	\stlin_{G, T, \Phi}(\R_s^{\Ind})
\), they are functorial on \(s\) in various senses (in the same way as the underlying ring objects) and they are also preserved under base changes.

Clearly, for every unipotent subset
\(
	\Sigma \subseteq \Phi
\) the multiplication morphism
\[
	\bigl[\!\!\!\:\bigl[
		\prod_\alpha x_\alpha(p_\alpha)
	\bigr]\!\!\!\:\bigr]
	\colon \prod_{\alpha \in \Sigma \setminus 2 \Sigma}
		P_\alpha(\mathcal A)
	\to \stlin_{G, T, \Phi}(\mathcal A)
\]
is a monomorphism. Thus the relations of
\(
	\stlin_{G, T, \Phi}(\mathcal A)
\) imply that
\(
	U_\Sigma(\mathcal A)
\) may be considered as a subgroup of both
\(
	\stlin_{G, T, \Phi}(\mathcal A)
\) and
\(
	G(\mathcal A)
\).

The following lemma is a generalization of \cite[lemma 5]{isotropic-elem}. We are going to apply it to
\(
	K_s
\) itself,
\(
	P_\alpha(K_s)
\), and
\(
	P_\alpha(K_s) \times P_\beta(K_s)
\).
\begin{lemma} \label{power-idem}
	Let
	\(
		s \in K
	\) be any element and
	\(
		(M, M_0)
	\) be a \(2\)-step nilpotent
	\(
		K_s
	\)-module such that
	\(
		M_0
	\) and
	\(
		M / M_0
	\) are free \(K_s\)-modules of finite rank. Then the morphism
	\[
		\mu
		= ({-}) \cdot ({=})^k
		\colon \mathbb W(M)(\R^{(s^\infty)})
			\times \R^{(s^\infty)}
		\to \mathbb W(M)(\R^{(s^\infty)})
	\]
	is an epimorphism for every
	\(
		k > 0
	\). It generates the target group object with the only relations
	\[
		\mu(p \dotplus q, a)
		= \mu(p, a) \dotplus \mu(q, a)
	\]
	for
	\(
		p, q
		\colon \mathbb W(M)(\R^{(s^\infty)})
	\),
	\(
		a \colon \R^{(s^\infty)}
	\) and
	\[
		\mu(p \cdot a^k, b) = \mu(p, a b)
	\]
	for
	\(
		p \colon \mathbb W(M)(\R^{(s^\infty)})
	\) and
	\(
		a, b \colon \R^{(s^\infty)}
	\).
\end{lemma}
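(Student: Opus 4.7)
The plan is to exploit the fact that multiplication by $s$ on $\R^{(s^\infty)}$ is an isomorphism in $\Pro(\mathbf P_K)$. The structure morphism $e_n \colon \R^{(s^{n+1})} \to \R^{(s^n)}$ of the defining inverse system, $a^{(s^{n+1})} \mapsto (sa)^{(s^n)}$, is essentially multiplication by $s$ after identifying the underlying schemes, so the endomorphism $s_* \colon a^{(s^n)} \mapsto (sa)^{(s^n)}$ of $\R^{(s^\infty)}$ agrees with the structure maps after re-indexing and is therefore a pro-isomorphism, with inverse the (non-ring) level-wise morphism $a^{(s^{n+1})} \mapsto a^{(s^n)}$. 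By functoriality of $\mathbb W(M)$ applied through the $\R_s^{\Ind}$-action on $\R^{(s^\infty)}$, multiplication by every power of $s$ is likewise invertible on $W = \mathbb W(M)(\R^{(s^\infty)})$.

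For the epimorphism claim, I construct a section of $\mu$. At pro-level $n$ the element $1^{(s^n)} \in \R^{(s^n)}$ satisfies $\delta\bigl((1^{(s^n)})^k\bigr) = s^{kn}$ by a direct homotope computation, so $\mu_n(-,\,1^{(s^n)})$ acts on $W_n$ as multiplication by $s^{kn}$, which is invertible by the previous paragraph. Setting
\[
\sigma_n(p) = \bigl((s^{kn})^{-1} \cdot p,\; 1^{(s^n)}\bigr),
\]
I would check compatibility with the structure morphisms of $W \times S$: the shift $1^{(s^{n+1})} \mapsto s^{(s^n)}$ in the second factor, contributing an extra factor of $s^k$ to $\delta\bigl((-)^k\bigr)$, is cancelled exactly by the discrepancy between $(s^{k(n+1)})^{-1}$ and $(s^{kn})^{-1}$ in the first factor. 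This yields a pro-morphism $\sigma \colon W \to W \times S$ with $\mu \circ \sigma = \id_W$, so $\mu$ is a split epimorphism.

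For the presentation claim, let $G$ be the group object presented by generators $W \times S$ with the two stated relations, and let $\pi \colon G \to W$ be the canonical epimorphism induced by $\mu$ (well-defined because (R1) and (R2) hold in $W$). The section $\sigma$ lifts to a morphism $\sigma' \colon W \to G$, $p \mapsto [\sigma(p)]$. Relation (R1) makes $\sigma'$ a group homomorphism, and relation (R2), applied iteratively to transfer powers of the second argument into the first and back, gives $\sigma' \circ \pi = \id_G$; together with $\pi$ being epi this shows it is an isomorphism, which is the required universal property.

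The main technical obstacle is the pro-category bookkeeping: the ``element'' $1^{(s^n)}$ does not lift to a pro-element of $\R^{(s^\infty)}$, so $\sigma$ cannot be defined naively level-wise and must instead be interpreted as a pro-morphism in which the isomorphism provided by $s$-multiplication compensates for the level shifts. The consistency of this compensation under the structure maps of $W \times S$ is precisely the content of relation (R2) at the pro-level, which is why the same relation reappears in the presentation established in the last step.
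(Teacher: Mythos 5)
Your key device---the section \(\sigma\) of \(\mu\)---does not exist as a morphism in \(\Pro(\mathbf P_K)\), and this breaks both halves of the argument. A morphism \(W \to W \times \R^{(s^\infty)}\) is the same thing as a pair of pro-morphisms, since \(\Pro(\mathbf P_K)\bigl(W,\, W \times \R^{(s^\infty)}\bigr) \cong \Pro(\mathbf P_K)(W, W) \times \Pro(\mathbf P_K)\bigl(W, \R^{(s^\infty)}\bigr)\); compatibility with the transition maps is therefore checked in each factor separately, and no ``compensation'' in the first factor can repair a defect in the second. Your second component is the constant family \(p \mapsto 1^{(s^n)}\), and it is not compatible: composing the constant map at \(1^{(s^{n+1})}\) with the structure map \(a^{(s^{n+1})} \mapsto (sa)^{(s^n)}\) gives the constant map at \(s^{(s^n)}\), not at \(1^{(s^n)}\), and precomposing with structure maps of the source cannot make two distinct constant maps represent the same germ. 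This is exactly the reason why \(1\) is not a global element of \(\R^{(s^\infty)}\) in the first place, a point you note but do not actually resolve. So the splitting of \(\mu\) is unproved (and there is no reason to expect \(\mu\) to be a split epimorphism at all), and the retraction \(\sigma'\) used in your presentation argument is likewise unavailable; the vague step ``relation (R2) applied iteratively gives \(\sigma' \circ \pi = \id\)'' also trades on manipulating the non-existent element \(1\). Your first paragraph (invertibility of multiplication by \(s\) on \(\R^{(s^\infty)}\), hence on \(\mathbb W(M)(\R^{(s^\infty)})\)) is correct, but it does not salvage the construction.

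The paper's proof avoids any section. For the epimorphism it re-indexes the target system so that \(\mu\) becomes isomorphic to a level morphism whose components \((\R^{(s^n)})^k \times (\R^{(s^n)})^l \times \R^{(s^n)} \to (\R^{(s^{(k+1)n})})^k \dotoplus (\R^{(s^{(2k+1)n})})^l\) are surjective presheaf maps (level-wise one may take \(a = 1^{(s^n)}\); these level sections are incompatible as \(n\) varies, which is harmless for level-wise surjectivity but is precisely what blocks a pro-section), and images in \(\Pro(\mathbf P_K)\) of level morphisms are computed component-wise, so \(\mu\) is a regular epimorphism. For the presentation, instead of mapping onto an abstractly presented group object and retracting, the paper computes the kernel pair of \(\mu\) explicitly as an inverse limit of concrete relations and identifies it with \(E \circ E^{\op}\), where \(E\) is the image of the relation \(\bigl((p \cdot a^k, b), (p, ab)\bigr)\); together with the additivity relation (reduced to plain additivity of the identity by applying the epimorphism statement to \(M\) and \(M \times M\)) and the trivial fact that a group object is generated by its identity morphism with only the multiplicativity relation, this yields the stated presentation. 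To repair your proof you would need to replace the splitting by such a kernel-pair computation, i.e.\ show directly that the congruence generated by your two relations contains the kernel pair of \(\mu\).
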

\begin{proof}
	Recall that
	\[
		\mathbb W(M)(\R^{(s^\infty)})
		\cong (\R^{(s^\infty)})^k
		\dotoplus (\R^{(s^\infty)})^l
	\]
	with the operations
	\[
		(x \dotoplus y) \cdot a = a x \dotoplus a^2 y,
	\quad
		a (0 \dotoplus y) = 0 \dotoplus a y,
	\quad
		(x_1 \dotoplus y_1)
			\dotplus (x_2 \dotoplus y_2)
		= (x_1 + x_2)
			\dotoplus (y_1 + c(x_1, x_2) + y_2)
	\]
	for some bilinear form
	\(
		c
		\colon \R_s^k \times \R_s^k
		\to \R_s^l
	\). The morphism \(\mu\) is a regular epimorphism in
	\(
		\Pro(\mathbf P_K)
	\) since it is the inverse limit of surjective morphisms
	\begin{align*}
		(\R^{(s^n)})^k
			\times (\R^{(s^n)})^l
			\times \R^{(s^n)}
		&\to (\R^{(s^{(k + 1) n})})^k
			\dotoplus (\R^{(s^{(2 k + 1) n})})^l,
	\\
		(\vec x^{(s^n)}, \vec y^{(s^n)}, a^{(s^n)})
		&\mapsto (a^k \vec x)^{(s^{(k + 1) n})}
			\dotoplus (a^{2 k} \vec y)
				^{(s^{(2 k + 1) n})}
	\end{align*}
	of presheaves. The kernel pair of \(\mu\) is the inverse limit of the binary relations
	\[
		\bigl[\!\!\!\:\bigl[
			\exists
				b_1, b_2, c \in \R^{(s^n)}
				\enskip \bigl(
					b_1^k \vec x_1 = b_2^k \vec x_2
					\wedge b_1^{2 k} \vec y_1
						= b_2^{2 k} \vec y_2
					\wedge a_1 = b_1 c
					\wedge a_2 = b_2 c
				\bigr)
		\bigr]\!\!\!\:\bigr]
	\]
	for
	\[
		(
			\vec x_1^{(s^n)},
			\vec y_1^{(s^n)},
			a_1^{(s^{2 n})}
		),
		(
			\vec x_2^{(s^n)},
			\vec y_2^{(s^n)},
			a_2^{(s^{2 n})}
		)
		\in (\R^{(s^n)})^k
			\times (\R^{(s^n)})^l
			\times \R^{(s^{2 n})},
	\]
	this is obvious if we consider \(\mu\) as the projective limit of the morphisms
	\[
		(\mathcal R^{(s^n)})^l
			\times (\mathcal R^{(s^n)})^m
			\times \mathcal R^{(s^{2 n})}
		\to (\mathcal R^{(s^{(2 k + 1) n})})^l
			\times (\mathcal R^{(s^{(4 k + 1) n})})^m
	\]
	and take
	\(
		c = 1^{(s^n)}
	\). In other words, the kernel pair of \(\mu\) is just
	\(
		E \circ E^\op
	\), where \(E\) is the image of
	\[
		\bigl[\!\!\!\:\bigl[
			\bigl((p \cdot a^k, b), (p, a b)\bigr)
		\bigr]\!\!\!\:\bigr]_{
			p \colon \mathbf W(M)(\R^{(s^\infty)}),\,
			a \colon \R^{(s^\infty)},\,
			b \colon \R^{(s^\infty)}
		}.
	\]

	It remains to show that the identity morphism
	\(
		\id
		\colon \mathbb W(M)(\R^{(s^\infty)})
		\to \mathbb W(M)(\R^{(s^\infty)})
	\) generates this group object and satisfies the only relation
	\[
		\id(p \cdot a^k \dotplus q \cdot a^k)
		= \id(p \cdot a^k) \dotplus \id(q \cdot a^k)
	\]
	for
	\(
		p, q \colon \mathbb W(M)(\R^{(s^\infty)})
	\) and
	\(
		a \colon \R^{(s^\infty)}
	\). Indeed, since \(\mu\) is a regular epimorphism both for \(M\) and for
	\(
		M \times M
	\), this relation is equivalent to
	\(
		\id(p \dotplus q) = \id(p) \dotplus \id(q)
	\). The claim now follows from the trivial fact that every group object \(G\) in an infinitary pretopos
	\(
		\mathbf C
	\) is generated by
	\(
		\id \colon G \to G
	\) with the only relation
	\(
		\id(g h) = \id(g)\, \id(h)
	\).
\end{proof}

\section{Root elimination}

Let us say that a subspace
\(
	V \leq \mathbb R \Phi
\) is \textit{\(k\)-small} if for every component
\(
	\Psi \subseteq \Phi
\) the codimension of
\(
	V \cap \mathbb R \Psi
\) in
\(
	\mathbb R \Psi
\) is at least \(k\). Every subspace \(V\) is \(0\)-small, and it is \(1\)-small if and only if it contains no components of \(\Phi\). If
\(
	V \leq V' \leq \mathbb R \Phi
\) are such that
\(
	\dim(V') \leq \dim(V) + 1
\) and \(V\) is
\(
	(k + 1)
\)-small, then \(V'\) is \(k\)-small.

\begin{lemma} \label{small-subsp}
	Let \(\Phi\) be a root system and
	\(
		V \leq \mathbb R \Phi
	\) be a subspace.
	\begin{enumerate}

		\item
		If \(V\) is \(1\)-small and
		\(
			\alpha \in \Phi \cap V
		\), then there is an irreducible saturated root subsystem
		\(
			\alpha \in \Psi \subseteq \Phi
		\) of rank \(2\) such that
		\(
			\mathbb R \Psi \cap V = \mathbb R \alpha
		\).

		\item
		If \(V\) is \(2\)-small and
		\(
			\alpha, \beta \in \Phi
		\) are linearly independent roots such that
		\(
			(\mathbb R \alpha + \mathbb R \beta) \cap V
		\) is one-dimensional, then there is an irreducible saturated root subsystem
		\(
			\alpha \in \Psi \subseteq \Phi
		\) of rank \(2\) such that
		\(
			\beta \notin \Psi
		\) and
		\(
			(\mathbb R \Psi + \mathbb R \beta) \cap V
		\) is still one-dimensional.

		\item
		If \(V\) is \(2\)-small and
		\(
			\alpha, \beta \in \Phi \cap V
		\) are linearly independent, then there is a rank \(2\) irreducible saturated root subsystem
		\(
			\alpha \in \Psi \subseteq \Phi
		\) such that
		\(
			\beta \notin \Psi
		\) and
		\(
			\mathbb R \Psi \cap V = \mathbb R \alpha
		\).

		\item
		If \(V\) is \(2\)-small,
		\(
			\alpha \in \Phi \cap V
		\) and
		\(
			\beta, \gamma \in \Phi \setminus V
		\) are linearly independent, and
		\(
			(
				\mathbb R \alpha
				+ \mathbb R \beta
				+ \mathbb R \gamma
			)
			\cap V
		\) is two-dimensional, then there is a rank \(2\) irreducible saturated root subsystem
		\(
			\alpha \in \Psi \subseteq \Phi
		\) such that
		\(
			\beta, \gamma \notin \Psi
		\) and both intersections
		\(
			(\mathbb R \Psi + \mathbb R \beta) \cap V
		\),
		\(
			(\mathbb R \Psi + \mathbb R \gamma) \cap V
		\) are one-dimensional (i.e. coincide with
		\(
			\mathbb R \alpha
		\)).

		\item
		If \(V\) is \(2\)-small, the roots
		\(
			\alpha \in \Phi \cap V
		\) and
		\(
			\beta, \gamma \in \Phi \setminus V
		\) are linearly independent, and
		\(
			(
				\mathbb R \alpha
				+ \mathbb R \beta
				+ \mathbb R \gamma
			)
			\cap V
		\) is two-dimensional, then there exists an irredicuble saturated root subsystem
		\(
			\gamma \in \Psi \subseteq \Phi
		\) of rank \(2\) such that
		\(
			\mathbb R \alpha
			+ \mathbb R \beta
			+ \mathbb R \Psi
		\) is four-dimensional and
		\(
			(
				\mathbb R \Psi
				+ \mathbb R \alpha
				+ \mathbb R \beta
			)
			\cap V
		\) is still two-dimensional.

	\end{enumerate}
\end{lemma}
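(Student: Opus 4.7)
The plan is to reduce all five parts to the existence of a suitable ``second root'' $\delta$ of the desired subsystem $\Psi$, and then to solve that existence problem by root-system combinatorics. First I would reduce to the case where $\Phi$ is irreducible of rank $\ge 2$: any rank-2 irreducible saturated subsystem containing a given root $\alpha_0$ lies inside the irreducible component $\Phi_0$ of $\Phi$ through $\alpha_0$, and $V \cap \mathbb{R}\Phi_0$ still has codimension $\ge k$ in $\mathbb{R}\Phi_0$ by the very definition of $k$-smallness.

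Next I would parameterize the candidate subsystems: every rank-2 irreducible saturated $\Psi \ni \alpha_0$ has the form $\Phi_0 \cap (\mathbb{R}\alpha_0 + \mathbb{R}\delta)$ for some $\delta \in \Psi$ linearly independent from and non-orthogonal to $\alpha_0$. A short linear-algebra computation, repeatedly using $\dim(A \cap B) = \dim A + \dim B - \dim(A + B)$, rewrites each of (1)--(5) as the requirement that $\delta$ lie in $\Phi_0$, be non-parallel and non-orthogonal to $\alpha_0$, and avoid a short list of proper subspaces of $\mathbb{R}\Phi_0$. For example, in (2) the condition $(\mathbb{R}\Psi + \mathbb{R}\beta) \cap V = (\mathbb{R}\alpha + \mathbb{R}\beta) \cap V$ becomes $\delta \notin V + \mathbb{R}\alpha + \mathbb{R}\beta$, a subspace of codimension $\ge 1$ by 2-smallness; in (3) one further needs $\delta \notin \mathbb{R}\alpha + \mathbb{R}\beta$ so that $\beta \notin \Psi$; in (4) and (5) two or three such subspaces appear. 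The $k$-smallness hypothesis in the lemma is calibrated precisely so that each subspace on the list is proper in $\mathbb{R}\Phi_0$.

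The key existence input is the root-system fact that in an irreducible $\Phi_0$ of rank $\ge 2$, the roots non-parallel and non-orthogonal to any fixed root $\alpha_0$ span $\mathbb{R}\Phi_0$; this follows from connectedness of the Dynkin diagram. Since these ``candidate'' roots are not contained in any single proper subspace of $\mathbb{R}\Phi_0$, claims (1) and (2) follow immediately. For (3), (4), and (5) the candidate $\delta$ must simultaneously evade a union of several proper subspaces. Over the infinite field $\mathbb{R}$ such a union never covers the whole vector space, but a finite set of roots can in principle be absorbed by it (as happens in $A_2$, where three coordinate lines exhaust all six roots). Here one leverages the extra codimension coming from $V$: one of the subspaces involved is $V$ itself (codimension $\ge 2$), so even after adding one or two extra lines the union remains of codimension $\ge 1$ and cannot swallow up the spanning set of candidate roots.

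I expect the main obstacle to be this last multi-avoidance step for (4) and (5), which is a delicate combinatorial verification: the subspaces $V + \mathbb{R}\alpha + \mathbb{R}\beta$, $V + \mathbb{R}\alpha + \mathbb{R}\gamma$, $\mathbb{R}\alpha + \mathbb{R}\beta$, and $\mathbb{R}\alpha + \mathbb{R}\gamma$ must be juggled simultaneously, and one must check that their union still misses some non-parallel, non-orthogonal root of $\Phi_0$. In the cleanest treatment one gives a uniform argument using the spanning property together with the fact that $V$ contributes at least two units of codimension; in the worst case one reduces to a case-by-case check over the types $A_n, B_n, C_n, D_n, E_6, E_7, E_8, F_4, G_2, BC_n$ of irreducible root systems. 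The rest of the proof is bookkeeping in linear algebra that the statement has been phrased to make go through with exactly the stated codimension bounds.
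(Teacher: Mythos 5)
Your skeleton (parameterize rank-$2$ irreducible saturated $\Psi\ni\alpha_0$ by a root $\delta$ non-parallel and non-orthogonal to $\alpha_0$, use that such roots span $\mathbb R\Phi_0$, and turn each part into avoiding subspaces whose properness comes from $k$-smallness) is essentially the paper's argument read contrapositively: the paper's key observation is that the spans of all such $\Psi$ sum to $\mathbb R\Phi_0$, and each part is proved by showing that if no good $\Psi$ exists then all these spans lie in a \emph{single} subspace of dimension $\dim V+1$, contradicting $2$-smallness. Parts (1)--(3) of your proposal are fine on this basis, since there only one subspace needs to be avoided ($V$, resp.\ $V+\mathbb R\alpha+\mathbb R\beta$, which has dimension $\dim V+1$ because $(\mathbb R\alpha+\mathbb R\beta)\cap V$ is a line).

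The genuine gap is exactly the step you flag as the ``main obstacle'' in (4) and (5), and your proposed resolution does not work as stated. Spanning of the candidate roots only rules out their containment in a \emph{single} proper subspace; the assertion that ``the union remains of codimension $\ge 1$ and cannot swallow up the spanning set'' is a non sequitur, as your own $\mathsf A_2$ example shows that finitely many roots can be covered by a union of proper subspaces, and nothing in your bookkeeping prevents this. What closes the gap (and is what the paper uses, implicitly) is that the dimension hypotheses force the seemingly different bad subspaces to \emph{coincide}: since $(\mathbb R\alpha+\mathbb R\beta+\mathbb R\gamma)\cap V$ is two-dimensional with $\alpha\in V$ and $\beta,\gamma\notin V$, the images of $\beta$ and $\gamma$ modulo $V$ span the same line, so $V+\mathbb R\beta=V+\mathbb R\gamma=V+\mathbb R\beta+\mathbb R\gamma$ has dimension $\dim V+1$; in (4) the failure of either the $\beta$- or the $\gamma$-condition (including $\beta\in\Psi$ or $\gamma\in\Psi$) puts $\delta$ into this one subspace, and in (5) (where $\Psi$ must contain $\gamma$, not $\alpha$) both failure modes put $\delta$ into $V+\mathbb R\gamma$. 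After this collapse you are back to single-subspace avoidance and your spanning fact together with $2$-smallness finishes the proof, with no case-by-case check over types needed. Without this observation, your argument for (4) and (5) is incomplete.
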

\begin{proof}
	Note that for every root
	\(
		\alpha \in \Phi
	\) the sum of spans of all rank \(2\) irreducible saturated root subsystems
	\(
		\alpha \in \Psi \subseteq \Phi
	\) contains the component
	\(
		\Phi_0 \subseteq \Phi
	\) containing \(\alpha\). Indeed, otherwise there is a hyperplane
	\(
		\alpha \in H \leq \mathbb R \Phi_0
	\) such that \(\alpha\) is orthogonal to
	\(
		\Phi_0 \setminus H
	\). This is impossible e.g. by \cite[lemma 1]{isotropic-elem}.
	\begin{enumerate}

		\item
		By the \(1\)-smallness of \(V\), not all rank \(2\) irreducible saturated root subsystems
		\(
			\alpha \in \Psi \subseteq \Phi
		\) are contained in \(V\).

		\item
		Suppose that for every rank \(2\) irreducible saturated root subsystem
		\(
			\alpha \in \Psi \subseteq \Phi
		\) either
		\(
			\beta \in \Psi
		\) or
		\(
			(\mathbb R \Psi + \mathbb R \beta) \cap V
		\) is two-dimensional. If
		\(
			\alpha \notin V
		\), then
		\(
			\mathbb R \Psi \cap V
		\) is non-zero for every such \(\Psi\), so
		\(
			\mathbb R \alpha + V
		\) contains the component
		\(
			\Phi_0 \subseteq \Phi
		\) containing \(\alpha\). If
		\(
			\alpha \in V
		\), then
		\(
			\mathbb R \beta + V
		\) contains all such \(\Psi\) and the whole component \(\Phi_0\). Both cases contradict the \(2\)-smallness of \(V\).

		\item
		This follows from the claim 1.

		\item
		Assume the contrary. The subspace
		\(
			\mathbb R \beta + \mathbb R \gamma + V
		\) has dimension
		\(
			\dim(V) + 1
		\) and contains all rank \(2\) irreducible saturated root subsystems
		\(
			\alpha \in \Psi \subseteq \Phi
		\), this contradicts the \(2\)-smallness of \(V\).

		\item
		If this is false, then the subspace
		\(
			V + \mathbb R \gamma
		\) of dimension
		\(
			\dim(V) + 1
		\) contains all irreducible saturated root subsystems
		\(
			\gamma \in \Psi \subseteq \Phi
		\) of rank \(2\), a contradiction.
		\qedhere

	\end{enumerate}
\end{proof}

Let
\(
	V \leq \mathbb R \Phi
\) be a subspace. Consider the Steinberg group
\[
	\stlin_{G, T, \Phi \setminus V}(
		\R^{(s^\infty)}
	)
	\in \mathbf U_K
\]
with ``eliminated'' \(V\). Namely, it is the group object generated by
\[
	x_\alpha
	\colon P_\beta(\R^{(\infty, s)})
	\to \stlin_{G, T, \Phi \setminus V}(
		\R^{(s^\infty)}
	)
\]
for
\(
	\alpha \in \Phi \setminus V
\) with the only relations
\begin{itemize}

	\item
	\(
		x_\alpha(p)\, x_\alpha(q)
		= x_\alpha(p \dotplus q)
	\) for
	\(
		p, q \colon P_\alpha(\R^{(s^\infty)})
	\) and
	\(
		\alpha \in V
	\);

	\item
	\(
		x_\alpha(p) = x_{2 \alpha}(p)
	\) for
	\(
		p \colon P_{2 \alpha}(\R^{(s^\infty)})
	\) if
	\(
		\alpha \in \Phi \setminus V
	\) is ultrashort in a component of type
	\(
		\mathsf{BC}_\ell
	\);

	\item
	\(
		[x_\alpha(p), x_\beta(q)]
		= \prod_\gamma
			x_\gamma(
				f_{\alpha, \beta}^\gamma(p, q)
			)
	\) for
	\(
		p \colon P_\alpha(\R^{(s^\infty)})
	\) and
	\(
		q \colon P_\beta(\R^{(s^\infty)})
	\), where \(\alpha\) and \(\beta\) are not anti-parallel and
	\(
		(
			\mathbb R_{\geq 0} \alpha
			+ \mathbb R_{\geq 0} \beta
		) \cap V
		= \{0\}
	\).

\end{itemize}

For any subspaces
\(
	V' \leq V \leq \mathbb R \Phi
\) let also
\[
	F_V^{V'}
	\colon \stlin_{G, T, \Phi \setminus V}(
		\R^{(s^\infty)}
	)
	\to \stlin_{G, T, \Phi \setminus V'}(
		\R^{(s^\infty)}
	)
\]
be the canonical homomorphism, i.e.
\(
	F_V^{V'}(x_\alpha(p)) = x_\alpha(p)
\) for
\(
	p \colon P_\alpha(\R^{(s^\infty)})
\) and
\(
	\alpha \in \Phi \setminus V
\). These group objects and homomorphisms are also preserved under base changes and functorial on \(s\).

We say that roots
\(
	\alpha, \beta \in \Phi
\) are \textit{neighbors} if they are linearly independent, non-orthogonal, and
\(
	(\mathbb R_{> 0} \alpha + \mathbb R_{> 0} \beta)
	\cap \Phi
	= \varnothing
\). If \(\Phi\) has no components of type
\(
	\mathsf G_2
\), then this is equivalent to
\(
	0 < \angle(\alpha, \beta) < \frac \pi 2
\).

\begin{lemma} \label{elim-sur}
	Let
	\(
		V' \leq V \leq \mathbb R \Phi
	\) be \(1\)-small subspaces. Then
	\[
		F_V^{V'}
		\colon \stlin_{G, T, \Phi / V}(\R^{(s^\infty)})
		\to \stlin_{G, T, \Phi / V'}(\R^{(s^\infty)})
	\]
	is an epimorphism. In particular,
	\(
		\stlin_{G, T, \Phi}(\R^{(s^\infty)})
	\) is generated by the images of
	\(
		x_\alpha
	\) for
	\(
		\alpha \in \Phi \setminus V
	\).
\end{lemma}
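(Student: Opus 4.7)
The plan is to show that every generator $x_\alpha(p)$ of $\stlin_{G, T, \Phi \setminus V'}(\R^{(s^\infty)})$ with $\alpha \in \Phi \cap V \setminus V'$ lies in the image of $F_V^{V'}$; since $x_\beta(q)$ for $\beta \in \Phi \setminus V$ is obviously in the image, this yields the epimorphism property, and the special case $V' = 0$ yields the second assertion.

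Fix $\alpha \in \Phi \cap V \setminus V'$. By lemma \ref{small-subsp}(1) there is a rank $2$ irreducible saturated root subsystem $\Psi \subseteq \Phi$ with $\alpha \in \Psi$ and $\mathbb R \Psi \cap V = \mathbb R \alpha$. In each rank $2$ type ($\mathsf A_2$, $\mathsf B_2$, $\mathsf{BC}_2$, $\mathsf G_2$) there are roots of $\Psi$ on both sides of the line $\mathbb R \alpha$, so one may pick $\beta, \gamma \in \Psi$ not proportional to $\alpha$ with $\alpha = i \beta + j \gamma$ for some $i, j \in \mathbb N_+$. Automatically $\beta, \gamma \in \Psi \setminus V$. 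Moreover, since $\alpha \notin V'$ and $\mathbb R \Psi \cap V' \subseteq \mathbb R \Psi \cap V = \mathbb R \alpha$ is a proper subspace of a one-dimensional space, $\mathbb R \Psi \cap V' = 0$; hence $(\mathbb R_{\geq 0} \beta + \mathbb R_{\geq 0} \gamma) \cap V' = 0$, and the commutator axiom for $\beta, \gamma$ is available in $\stlin_{G, T, \Phi \setminus V'}(\R^{(s^\infty)})$.

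Expanding
\[
	[x_\beta(y), x_\gamma(z)]
	= \prod_{\substack{i' \beta + j' \gamma \in \Phi \\ i', j' \in \mathbb N_+}}
		x_{i' \beta + j' \gamma}\bigl(
			f_{\beta, \gamma}^{i' \beta + j' \gamma}(y, z)
		\bigr),
\]
every $\delta = i' \beta + j' \gamma$ lies in $\Psi$ with $i', j' > 0$, so positivity rules out negative multiples of $\alpha$, and the only $\delta \in V$ are $\alpha$ itself and (in the ultrashort $\mathsf{BC}$ case) $2\alpha$. Each factor with $\delta \in \Phi \setminus V$ is already in the image of $F_V^{V'}$, so rearranging shows that the product $x_\alpha(f_{\beta,\gamma}^\alpha(y,z)) \cdot x_{2 \alpha}(f_{\beta,\gamma}^{2\alpha}(y,z))$ lies in the image for all $y, z$.

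Finally, to hit an arbitrary $p \colon P_\alpha(\R^{(s^\infty)})$, I would combine the homogeneity identity $f_{\beta,\gamma}^\alpha(y \cdot_\beta k, z \cdot_\gamma \ell) = f_{\beta,\gamma}^\alpha(y, z) \cdot_\alpha k^i \ell^j$ with lemma \ref{power-idem} applied to $M = \mathfrak g_\alpha$ (or $\mathfrak g_\alpha \dotoplus \mathfrak g_{2 \alpha}$ in the ultrashort case): lemma \ref{power-idem} writes every $p$ as $p_0 \cdot_\alpha a^{i + j}$ for some $p_0, a$, after which substituting $y = y_0 \cdot_\beta a$, $z = z_0 \cdot_\gamma a$ for a chosen lift $(y_0, z_0)$ of $p_0$ through $f_{\beta,\gamma}^\alpha$ yields the desired generator. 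The ultrashort case, in which the $x_{2\alpha}$-term is coupled to $x_\alpha$, is handled by first running the entire argument for the long root $2\alpha$ (which gets its own rank $2$ subsystem via lemma \ref{small-subsp}(1)) to place every $x_{2 \alpha}(\cdot)$ in the image, and then isolating the $x_\alpha$-term. The main obstacle I expect is this last divisibility step, in particular constructing a functorial preimage of $p_0$ under the polynomial morphism $f_{\beta, \gamma}^\alpha$ inside the category $\mathbf U_K$ and cleanly separating the $\alpha$/$2\alpha$ contributions in ultrashort situations.
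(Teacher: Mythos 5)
The first half of your argument is essentially the paper's: pick a rank \(2\) irreducible saturated subsystem \(\Psi \ni \alpha\) with \(\mathbb R \Psi \cap V = \mathbb R \alpha\) by lemma \ref{small-subsp}(1), observe \(\mathbb R \Psi \cap V' = 0\) so the relevant commutator relation is available in \(\stlin_{G, T, \Phi \setminus V'}(\R^{(s^\infty)})\), and note that the only factors of the commutator expansion lying in \(V\) are the \(\alpha\)- (and folded-in \(2\alpha\)-) terms. But your final step has a genuine gap, and it is exactly the one you flag: you need every \(p \colon P_\alpha(\R^{(s^\infty)})\) to be hit, and for that you invoke a ``chosen lift \((y_0, z_0)\) of \(p_0\) through \(f_{\beta, \gamma}^\alpha\)''. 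There is no such lift in general: \(f_{\beta, \gamma}^\alpha\) is a bilinear-type scheme morphism whose image need not be all of \(P_\alpha\) (nor even a subgroup) — already in the split linear case it is \((y, z) \mapsto y z\), and in the relative setting it can be a pairing into a module of rank \(> 1\). Lemma \ref{power-idem} does not repair this: it says that scalar multiplication \(({-}) \cdot ({=})^k\) is a regular epimorphism on \(\mathbb W(M)(\R^{(s^\infty)})\), so it lets you divide by scalars, but the ``quotient'' \(p_0\) it produces is again an arbitrary element of \(P_\alpha(\R^{(s^\infty)})\), not one known to lie in the image of \(f_{\beta, \gamma}^\alpha\). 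So homogeneity plus divisibility alone cannot give surjectivity.

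The missing ingredient is the invertible-element mechanism, i.e.\ the Weyl triples built into the definition of an isotropic pinning. The paper takes \(\gamma = s_\beta(\alpha)\) for a neighbor \(\beta \in \Psi \setminus V\) of \(\alpha\), fixes \(e \in P_\beta(K_s)^*\), and uses lemma \ref{root-units}(5): \(f_{\beta, s_\beta(\alpha)}^\alpha(e, {-}) \colon P_{s_\beta(\alpha)} \to P_\alpha\) is an isomorphism of group schemes. Then by homogeneity \(f_{\beta, s_\beta(\alpha)}^\alpha(e \cdot_\beta p, q) = f_{\beta, s_\beta(\alpha)}^\alpha(e, q) \cdot_\alpha p^i\), so the morphism \(\R^{(s^\infty)} \times P_{s_\beta(\alpha)}(\R^{(s^\infty)}) \to P_\alpha(\R^{(s^\infty)})\) is an isomorphism composed with the power map, hence a regular epimorphism by lemma \ref{power-idem}; no lift through a non-surjective \(f\) is ever needed. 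This also disposes of your separate treatment of the ultrashort case, which as written does not close the argument anyway: knowing \(\Image(x_{2\alpha}) \subseteq \Image(F_V^{V'})\) says nothing about \(x_\alpha\) on the larger domain \(P_\alpha \supsetneq P_{2\alpha}\), whereas \(f^\alpha(e, {-})\) is an isomorphism of the full \(2\)-step nilpotent group schemes and the convention \(f^{2\alpha}_{\beta, s_\beta(\alpha)} = \dot 0\) removes the extra \(2\alpha\)-factor from the product.
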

\begin{proof}
	We have to check that
	\(
		\Image(x_\alpha)
	\) lies in the subgroup from the statement for every
	\(
		\alpha \in \Phi \cap V
	\). Choose a neighbor
	\(
		\beta \in \Phi \setminus V
	\) of \(\alpha\) using lemma \ref{small-subsp}(1). The morphism
	\[
		\R^{(s^\infty)}
		\times P_{s_\beta(\alpha)}(\R^{(s^\infty)})
		\to P_\alpha(\R^{(s^\infty)})
	\]
	given by the term
	\(
		f_{\beta, s_\beta(\alpha)}^\alpha(
			e \cdot_\beta p,
			q
		)
	\) for a Weyl element
	\(
		e \in P_\beta(K_s)
	\) is a regular epimorphism by lemmas \ref{root-units}(5) and \ref{power-idem}. Then
	\(
		\Image(x_\alpha)
	\) is contained in the subgroup generated by
	\(
		[x_\beta(p), x_{s_\beta(\alpha)}(q)]
	\) and
	\(
		x_\gamma(r)
	\) for
	\(
		\gamma
		\in (
			\mathbb N_+ \beta
			+ \mathbb N_+ s_\beta(\alpha)
		)
		\setminus V
	\).
\end{proof}

\begin{lemma} \label{vacuous-rel}
	Let
	\(
		V \leq \mathbb R \Phi
	\) be \(2\)-small subspace and
	\(
		\alpha, \beta \in \Phi
	\) be linearly independent roots such that
	\(
		(
			\mathbb R_{\geq 0} \alpha
			+ \mathbb R_{\geq 0} \beta
		)
		\cap \Phi
		\cap V
		= \varnothing
	\). Then
	\[
		[x_\alpha(p), x_\beta(q)]
		= \prod_\gamma
			x_\gamma(
				f_{\alpha, \beta}^\gamma(p, q)
			)
	\]
	holds in
	\(
		\stlin_{G, T, \Phi \setminus V}(
			\R^{(s^\infty)}
		)
	\) for
	\(
		p \colon P_\alpha(\R^{(s^\infty)})
	\) and
	\(
		q \colon P_\beta(\R^{(s^\infty)})
	\).
\end{lemma}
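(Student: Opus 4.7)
If $(\mathbb R_{\geq 0}\alpha + \mathbb R_{\geq 0}\beta) \cap V = \{0\}$, the identity is one of the imposed defining relations of $\stlin_{G, T, \Phi \setminus V}(\R^{(s^\infty)})$ and there is nothing to prove. From now on assume otherwise. Neither $\alpha$ nor $\beta$ lies in $V$ (each is itself a root in the positive cone, contradicting the hypothesis), so the plane $W = \mathbb R\alpha + \mathbb R\beta$ meets $V$ in a line $\ell$. Since this line contains a non-zero element of the positive cone, we may write $\ell = \mathbb R(c_1\alpha + c_2\beta)$ with $c_1, c_2 > 0$; by hypothesis $\ell \cap \Phi = \varnothing$.

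The plan is to re-express $x_\alpha(p)$ as a legal commutator inside a well-chosen rank-$2$ subsystem and then expand $[x_\alpha(p), x_\beta(q)]$ via Hall--Witt. Apply lemma \ref{small-subsp}(2) to produce an irreducible saturated rank-$2$ root subsystem $\alpha \in \Psi \subseteq \Phi$ with $\beta \notin \Psi$ and $(\mathbb R\Psi + \mathbb R\beta) \cap V = \ell$ still one-dimensional. Because $\Psi$ is saturated and $\beta \notin \Psi$ we have $\beta \notin \mathbb R\Psi$, and since $c_2 > 0$ the line $\ell$ is not contained in $\mathbb R\Psi$; hence $\mathbb R\Psi \cap V = \{0\}$. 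Thus $\Psi$ is disjoint from $V$, and every Chevalley commutator relation among the $x_\mu$ for $\mu \in \Psi$ is imposed in $\stlin_{G, T, \Phi \setminus V}(\R^{(s^\infty)})$.

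Pick a neighbor $\gamma \in \Psi$ of $\alpha$, a Weyl element $e \in P_\gamma(K_s)^*$, and (by lemma \ref{root-units}(5)) the unique $p' \in P_{s_\gamma(\alpha)}(\R^{(s^\infty)})$ with $f^\alpha_{\gamma, s_\gamma(\alpha)}(e, p') = p$. The legal expansion
\[
	[x_\gamma(e), x_{s_\gamma(\alpha)}(p')]
	= \prod_\rho x_\rho\!\bigl(
		f^\rho_{\gamma, s_\gamma(\alpha)}(e, p')
	\bigr),
\]
indexed by $\rho \in (\mathbb N_+ \gamma + \mathbb N_+ s_\gamma(\alpha)) \cap \Phi \subseteq \Psi$, contains $x_\alpha(p)$ as a factor; the only other index that is a positive multiple of $\alpha$ is $\rho = 2\alpha$ in the $\mathsf{BC}_\ell$-ultrashort case, absorbed via $x_\alpha|_{P_{2\alpha}} = x_{2\alpha}$. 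Hence $x_\alpha(p)$ can be isolated as this commutator multiplied by $x_\rho^{\pm 1}$ for $\rho \in \Psi$ not a positive multiple of $\alpha$.

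Substitute this expression into $[x_\alpha(p), x_\beta(q)]$ and expand using $[uv, w] = \up u{[v, w]}\,[u, w]$, $[u, vw] = [u, w]\,\up w{[u, v]}$, together with the Hall--Witt identity applied to $[[x_\gamma(e), x_{s_\gamma(\alpha)}(p')], x_\beta(q)]$. A direct coordinate check based on the shape of $\ell$ shows that every elementary commutator arising pairs roots whose positive cone still avoids $V$: either both roots lie in $\Psi$ (automatic), or one root in $\Psi$ is not a positive multiple of $\alpha$ while the other lies in $\{\beta\} \cup (\mathbb N_+\Psi + \mathbb N_+ \beta)$ — and a candidate vector of $V$ in the corresponding positive cone would have to be a positive scalar multiple of $c_1\alpha + c_2\beta$, forcing the $\Psi$-root to be a positive multiple of $\alpha$ and thereby yielding a contradiction. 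Consequently every such commutator is legal, and rearranging produces an identity $[x_\alpha(p), x_\beta(q)] = \prod_\gamma x_\gamma(g_\gamma(p, q))$ for explicit scheme morphisms $g_\gamma$. Applying $\stmap$ gives $\prod_\gamma t_\gamma(g_\gamma(p, q)) = [t_\alpha(p), t_\beta(q)] = \prod_\gamma t_\gamma(f^\gamma_{\alpha, \beta}(p, q))$ in $G$, and the closed-embedding property of $\prod_\gamma U_\gamma \hookrightarrow G$ over the relevant unipotent subset (see the bullet list in \S 2.2) forces $g_\gamma = f^\gamma_{\alpha, \beta}$. The main obstacle is the combinatorial bookkeeping of the Hall--Witt expansion — in particular the case analysis, organized by the rank-$2$ type of $\Psi$ and the choice of $\gamma$, ensuring that every intermediate commutator pair is legal — together with a parallel treatment of the $\mathsf{BC}_\ell$-ultrashort case.
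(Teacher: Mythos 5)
Your reduction to the case where $(\mathbb R_{\geq 0}\alpha + \mathbb R_{\geq 0}\beta)\cap V$ is a ray $\ell$ avoiding $\Phi$, and your choice of $\gamma$ via lemma \ref{small-subsp}(2), match the paper's setup. But the central claim of your proposal --- that ``a direct coordinate check'' shows every elementary commutator arising in the Hall--Witt expansion pairs roots whose positive cone avoids $V$ --- is false in general, and this is exactly the hard part of the lemma. Decomposing a putative $v\in V$ in $\mathbb R\Psi\oplus\mathbb R\beta$ only gives that a \emph{non-negative combination} of the $\Psi$-component of the two roots is a positive multiple of $\alpha$; since $\alpha = s_\gamma(\alpha) + c\gamma$ with $c>0$ lies in the interior of the cone $\mathbb R_{\geq 0}\gamma + \mathbb R_{\geq 0}s_\gamma(\alpha)$, this does not force either root to be a multiple of $\alpha$. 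Concretely, a pair such as $\gamma$ together with a root of the form $i\,s_\gamma(\alpha)+j\beta$ (or two nested roots both outside $\mathbb R\alpha+\mathbb R\beta$) can trap $\ell$ in its positive cone, and then the needed commutator relation is neither imposed in $\stlin_{G,T,\Phi\setminus V}(\R^{(s^\infty)})$ nor available. This is precisely the ``bad case'' the paper isolates; overcoming it requires the induction on $\angle(\alpha,\beta)$, the explicit table choosing $\gamma$ (and possibly swapping $\alpha$ and $\beta$) case by case over the rank-$3$ types $\mathsf A_3,\mathsf B_3,\mathsf C_3,\mathsf{BC}_3$ depending on the position of $\ell$, and a separate second argument (evaluating $\bigl[\up{x_\gamma(p)}{x_{s_\gamma(\alpha)}(q)},\up{x_\gamma(p)}{x_{s_\gamma(\beta)}(r)}\bigr]$ for a common neighbor $\gamma$) for the two configurations with no roots strictly between $\alpha$ and $\beta$. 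None of this bookkeeping can be waved away as routine.

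A second, more structural error: you write the generator $x_\gamma(e)$ for a Weyl element $e\in P_\gamma(K_s)^*$, but constants of $K_s$ are not points of the co-localization, so $x_\gamma(e)$ is not an element of $\stlin_{G,T,\Phi\setminus V}(\R^{(s^\infty)})$ and your opening expansion is not well-formed in this group object. The paper instead works with $e\cdot_\gamma a$ for $a\colon\R^{(s^\infty)}$ and uses the epimorphism $(a,q)\mapsto f^{\alpha}_{\gamma,s_\gamma(\alpha)}(e\cdot_\gamma a,q)$ furnished by lemmas \ref{root-units}(5) and \ref{power-idem}; proving the identity after precomposition with this epimorphism is what replaces your appeal to ``the unique $p'$ with $f^\alpha_{\gamma,s_\gamma(\alpha)}(e,p')=p$''. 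Both issues would have to be repaired, and the first one essentially forces you back to the paper's case analysis.
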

\begin{proof}
	If
	\(
		(
			\mathbb R_{\geq 0} \alpha
			+ \mathbb R_{\geq \beta}
		)
		\cap V
		= 0
	\), then there is nothing to prove, so from now on we assume that this intersection is a ray \(\rho\) not containing any roots. Using induction by
	\(
		\angle(\alpha, \beta)
	\), we may assume that the claim holds for all pairs of roots with smaller angle. Choose a root
	\(
		\gamma
		\in \Phi
		\setminus (
			\mathbb R \alpha
			+ \mathbb R \beta
		)
	\) neighbor to \(\alpha\) such that
	\(
		(
			\mathbb R \alpha
			+ \mathbb R \beta
			+ \mathbb R \gamma
		)
		\cap V
	\) is one-dimensional, it exists by lemma \ref{small-subsp}(2). Evaluate
	\[
		\up{
			[x_\gamma(p), x_{s_\gamma(\alpha)}(q)]
		}{x_\beta(r)}\,
		x_\beta(\dotminus r)
	\]
	in two ways using the commutator formula, either by first simplifying the commutator (such that
	\(
		x_\alpha\bigl(
			f_{\gamma, s_\gamma(\alpha)}
				^\alpha
				(p, q)
		\bigr)
	\) becomes the first factor in the case
	\(
		\angle(\alpha, \gamma) = \frac \pi 4
	\)) or just by applying four conjugations in a row. In the first case we obtain
	\[
		\bigl[
			x_\alpha\bigl(
				f_{\gamma, s_\gamma(\alpha)}
					^\alpha
					(p, q)
			\bigr),
			x_\beta(r)
		\bigr]\,
		\prod_{
			\substack{
				i \alpha + j \beta + k \gamma \in \Phi
			\\
				i, j, -k \in \mathbb N_+
			}
		}
			x_{
				i \alpha + j \beta + k \gamma
			}(t_{ijk})
	\]
	for some terms
	\(
		t_{ijk}
		\colon P_{
			i \alpha + j \beta + k \gamma
		}(\R^{(s^\infty)})
	\). In the second case and using the induction hypothesis we obtain a product
	\[
		\prod_{
			\substack{
				i \alpha + j \beta + k \gamma \in \Phi
			\\
				i, j \in \mathbb N_+,\,
				-k \in \mathbb N_{\geq 0}
			}
		}
			x_{i \alpha + j \beta + k \gamma}(t'_{ijk})
	\]
	for some terms
	\(
		t'_{ijk}
		\colon P_{
			i \alpha + j \beta + k \gamma
		}(\R^{(s^\infty)})
	\) unless there are two linearly independent roots
	\[
		\delta, \varepsilon
		\in (
			\mathbb R_{\geq 0} s_\gamma(\alpha)
			+ \mathbb R_{> 0} \beta
			+ \mathbb R_{\geq 0} \gamma
		)
		\cap \Phi
	\]
	such that
	\(
		\delta, \varepsilon
		\notin \mathbb R \alpha + \mathbb R \beta
	\),
	\(
		\rho
		\subseteq \mathbb R_{\geq 0} \delta
			+ \mathbb R_{\geq 0} \varepsilon
	\), and
	\(
		\angle(\delta, \varepsilon)
		\geq \angle(\alpha, \beta)
	\). If the ``bad'' case does not occur, we easily obtain the claim via lemmas \ref{root-units}(5) and \ref{power-idem}. In particular, the claim holds if
	\(
		\Psi
		= (
			\mathbb R \alpha
			+ \mathbb R \beta
			+ \mathbb R \gamma
		)
		\cap \Phi
	\) is reducible, i.e. if \(\beta\) is orthogonal to \(\alpha\) and \(\gamma\).

	From now on we consider only irreducible \(\Psi\). Since its rank is \(3\), it is of type
	\(
		\mathsf A_3
	\),
	\(
		\mathsf B_3
	\),
	\(
		\mathsf C_3
	\), or
	\[
		\mathsf{BC}_3
		= \{
			\pm \e_i \pm \e_j
			\mid
			1 \leq i < j \leq 3
		\}
		\cup \{
			\pm \e_i
			\mid
			1 \leq i \leq 3
		\}
		\cup \{
			\pm 2 \e_i
			\mid
			1 \leq i \leq 3
		\}.
	\]
	It is convenient to realize
	\(
		\mathsf A_3
	\) as
	\(
		\mathsf D_3 \subseteq \mathsf{BC}_3
	\), so in any case
	\(
		\Psi \subseteq \mathsf{BC}_3
	\). We are going to possibly swap \(\alpha\) with \(\beta\) and to change \(\gamma\) by another neighbor of \(\alpha\) in
	\(
		\Psi
		\setminus (\mathbb R \alpha + \mathbb R \beta)
	\) in such a way that the above argument is valid. Almost all cases (namely, such that the condition from the last column holds) up to the action of the Weyl group of
	\(
		\mathsf{BC}_3
	\) are covered by the following table.
	\begin{center}
		\begin{tabular}{|c|c|c|c|c|c|}
		\hline
			\(\Psi\)
			& \(\alpha\)
			& \(\beta\)
			& \(\angle(\alpha, \beta)\)
			& \(\gamma\)
			& condition
		\\ \hline \hline
			not \(\mathsf A_3\)
			& \(\e_1 + \e_3\)
			& \(\e_1\), \(2 \e_1\)
			& \(\pi / 4\)
			& \(\e_1 - \e_2\)
			& none
		\\ \hline
			\(\mathsf A_3\)
			& \(\e_1 + \e_3\)
			& \(\e_2 + \e_3\)
			& \(\pi / 3\)
			& \(\e_1 + \e_2\)
			& none
		\\ \hline
			not \(\mathsf A_3\)
			& \(\e_1 + \e_2\)
			& \(\e_2 + \e_3\)
			& \(\pi / 3\)
			& \(\e_1\), \(2 \e_1\)
			& none
		\\ \hline
			\(\mathsf A_3\)
			& \(\e_1 + \e_2\)
			& \(\e_1 - \e_2\)
			& \(\pi / 2\)
			& \(\e_1 - \e_3\)
			& \(\rho \neq \mathbb R_{\geq 0} \e_1\)
		\\ \hline
			not \(\mathsf A_3\)
			& \(\e_3 + \e_2\)
			& \(\e_3 - \e_2\)
			& \(\pi / 2\)
			& \(\e_3 - \e_1\)
			& none
		\\ \hline
			not \(\mathsf A_3\)
			& \(\e_1\), \(2 \e_1\)
			& \(\e_2\), \(2 \e_2\)
			& \(\pi / 2\)
			& \(\e_1 + \e_3\)
			& none
		\\ \hline
			not \(\mathsf A_3\)
			& \(\e_1 + \e_2\)
			& \(\e_3\), \(2 \e_3\)
			& \(\pi / 2\)
			& \(\e_1 + \e_3\)
			& \(
				\rho
				\neq \mathbb R_{\geq 0}
					(\e_1 + \e_2 + \e_3)
			\)
		\\ \hline
			any
			& \(\e_1 + \e_2\)
			& \(\e_3 - \e_2\)
			& \(2 \pi / 3\)
			& \(\e_2 + \e_3\)
			& none
		\\ \hline
			not \(\mathsf A_3\)
			& \(\e_1 - \e_2\)
			& \(\e_2\)
			& \(3 \pi / 4\)
			& \(\e_1 - \e_3\)
			& none
		\\ \hline
		\end{tabular}
	\end{center}

	On the other hand, suppose that
	\(
		\Phi
		\cap (
			\mathbb R_{> 0} \alpha
			+ \mathbb R_{> 0} \beta
		)
		= \varnothing
	\), this condition holds in the two cases not covered by the table. There exists a common neighbor \(\gamma\) of \(\alpha\) and \(\beta\), it is also given by the above table ignoring the last column (unless \(\Psi\) is not of type
	\(
		\mathsf A_3
	\) and
	\(
		\angle(\alpha, \beta) = \pi / 3
	\), in this case we take \(\gamma\) from the corresponding row for
	\(
		\Psi = \mathsf A_3
	\)). Evaluate
	\[
		\bigl[
			\up{x_\gamma(p)}{x_{s_\gamma(\alpha)}(q)},
			\up{x_\gamma(p)}{x_{s_\gamma(\beta)}(r)}
		\bigr]
	\] in two ways, either by moving the conjugation outside of the commutator (so the commutator becomes trivial) or by applying the commutator identities and the expanding the commutator using
	\[
		[a b, c] = \up a {[b, c]}\, [a, c],
	\enskip
		[a, b c] = [a, b]\, \up b {[a, c]}
	\]
	In the second case we order the factors in the commutator such that
	\(
		u
		= x_\alpha\bigl(
			f_{\gamma, s_\gamma(\alpha)}^\alpha(p, q)
		\bigr)
	\) and
	\(
		v
		= x_\beta\bigl(
			f_{\gamma, s_\gamma(\beta)}^\beta(p, r)
		\bigr)
	\) are the first ones. We obtain an identity
	\(
		w\, [u, v]\, w' = 1
	\) for some \(w\) and \(w'\) of the type
	\[
		\prod_{
			\substack{
				i \alpha + j \beta + k \gamma \in \Phi
			\\
				i, j, -k \in \mathbb N_+
			}
		}
			x_{i \alpha + j \beta + k \gamma}
				(t_{i j k}).
	\]
	Since \(w\) and \(w'\) are necessary inverses of each other (this may be checked in
	\(
		\stlin_{G, T, \Phi}(\R^{(s^\infty)})
	\)), we get
	\(
		[x_\alpha(p), x_\beta(q)] = 1
	\) by lemmas \ref{root-units}(5) and \ref{power-idem}.
\end{proof}

\begin{prop} \label{elim-bij}
	Let
	\(
		V' \leq V \leq \mathbb R \Phi
	\) be \(2\)-small subspaces. Then
	\[
		F_V^{V'}
		\colon \stlin_{G, T, \Phi \setminus V}(
			\R^{(s^\infty)}
		)
		\to \stlin_{G, T, \Phi \setminus V'}(
			\R^{(s^\infty)}
		)
	\]
	is an isomorphism.
\end{prop}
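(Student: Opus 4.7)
The plan is to exhibit an inverse homomorphism to $F_V^{V'}$. Since Lemma \ref{elim-sur} already gives surjectivity, injectivity will follow if we can construct $H\colon \stlin_{G,T,\Phi\setminus V'}(\R^{(s^\infty)})\to \stlin_{G,T,\Phi\setminus V}(\R^{(s^\infty)})$ with $H(x_\alpha(p)) = x_\alpha(p)$ for $\alpha\in\Phi\setminus V$; then $H\circ F_V^{V'} = \id$ on generators hence on the whole group, and $F_V^{V'}\circ H = \id$ follows from surjectivity of $F_V^{V'}$. First I would choose a flag $V' = V_0<V_1<\dots<V_n = V$ with $\dim V_{i+1} = \dim V_i + 1$. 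Each $V_i\leq V$ automatically inherits $2$-smallness, because $V_i\cap \mathbb R\Psi$ is contained in $V\cap\mathbb R\Psi$ and therefore has codimension $\geq 2$ in $\mathbb R\Psi$ for every component $\Psi$. Composing the inverses at each stage reduces to the case $\dim V = \dim V' + 1$.

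In that case, for each $\alpha\in\Phi\cap V\setminus V'$ I apply Lemma \ref{small-subsp}(1) to the $1$-small subspace $V$ to produce a rank-$2$ irreducible saturated subsystem $\alpha\in\Psi_\alpha\subseteq\Phi$ with $\mathbb R\Psi_\alpha\cap V = \mathbb R\alpha$. Choose a neighbor $\beta = \beta_\alpha\in\Psi_\alpha$ of $\alpha$ (necessarily $\beta, s_\beta(\alpha)\in\Psi_\alpha\setminus V$) and an invertible element $e = e_\alpha\in P_\beta(K_s)^*$. By Lemma \ref{root-units}(5), $q\mapsto f_{\beta,s_\beta(\alpha)}^\alpha(e,q)$ is a group-scheme isomorphism $P_{s_\beta(\alpha)}\cong P_\alpha$; let $q_p$ be the unique preimage of $p$. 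The expansion of $[x_\beta(e), x_{s_\beta(\alpha)}(q_p)]$ produces a factor $x_\alpha(p)$ together with factors $x_\gamma(t_\gamma)$ for the other roots $\gamma = i\beta + js_\beta(\alpha)\in\Psi_\alpha$ with $i,j\in\mathbb N_+$; by the choice $\mathbb R\Psi_\alpha\cap V = \mathbb R\alpha$, every such $\gamma$ lies in $\Phi\setminus V$. I therefore define $H(x_\alpha(p))$ as the product which, when evaluated inside $\stlin_{G,T,\Phi\setminus V}(\R^{(s^\infty)})$, isolates the $x_\alpha(p)$-factor of this commutator. By Lemma \ref{gen-rel}(2), $H$ becomes a well-defined group homomorphism once the defining Steinberg relations of $\stlin_{G,T,\Phi\setminus V'}$ are verified to hold after applying $H$.

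The relations to verify are additivity of $H(x_\alpha(\cdot))$ on the new generators, the ultrashort identification (handled by taking $\Psi_\alpha = \Psi_{2\alpha}$ and a common $\beta_\alpha$), and the Chevalley commutator formula $[H(x_\alpha(p)), H(x_\delta(q))] = \prod_\gamma H(x_\gamma(f_{\alpha,\delta}^\gamma(p,q)))$ for all admissible pairs. Additivity follows from Lemma \ref{power-idem} applied through the parametrization $p\leftrightarrow q_p$ together with Lemma \ref{vacuous-rel} applied inside $\Psi_\alpha\setminus V$. For each commutator relation with at least one root in $V\setminus V'$, I would unfold the relevant $H(x_{\,\cdot\,})$'s as commutators in $\Phi\setminus V$ and reduce to commutator identities among roots in an enlarged rank-$3$ or rank-$4$ subsystem whose roots (off of $\mathbb R\alpha,\mathbb R\delta$) still avoid $V$; such configurations are produced by parts (2)--(5) of Lemma \ref{small-subsp}, and each reduction lands squarely in the hypothesis of Lemma \ref{vacuous-rel}. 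Independence of $H$ from the auxiliary data $(\Psi_\alpha,\beta_\alpha,e_\alpha)$ is then automatic a posteriori.

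The main obstacle will be the commutator verification when both $\alpha,\delta\in\Phi\cap V\setminus V'$ and some positive combination $i\alpha+j\delta$ also lies in $V$: here the naive rank-$2$ witness for $\alpha$ already sees $\delta$, and one must pivot to a rank-$4$ auxiliary configuration via Lemma \ref{small-subsp}(5), using the full $2$-small slack of $V$ to keep every intermediate root outside $V$. Book-keeping of the correction factors in the style of the rank-$3$ table in the proof of Lemma \ref{vacuous-rel}, now one dimension higher, is where the argument will be most delicate.
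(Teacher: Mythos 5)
Your overall plan is the same as the paper's: use Lemma \ref{elim-sur} for surjectivity, reconstruct the missing root morphisms as ``corrected commutators'' built from a neighbor supplied by Lemma \ref{small-subsp}(1), and verify the Steinberg relations by two-way evaluations inside small subsystems (the reduction to \(V'=0\) via a flag instead of the paper's single step is immaterial). However, there is a concrete gap at the very first step of your construction: the expression \(x_\beta(e)\) for a fixed invertible \(e\in P_\beta(K_s)^*\) is not a well-formed element of \(\stlin_{G,T,\Phi\setminus V}(\R^{(s^\infty)})\). The generators of the co-local Steinberg group take arguments in \(P_\beta(\R^{(s^\infty)})\), and the co-localization \(\R^{(s^\infty)}\) is a non-unital pro-ring, so a \(K_s\)-point cannot be fed into \(x_\beta\); nor can you conjugate by \(e\) via the action of \(G(\R_s^{\Ind})\) or \(\gstlin_{G,T,\Phi}(\R_s^{\Ind})\), since that action (Lemma \ref{gst-action}, Proposition \ref{local-action}) is constructed \emph{using} Proposition \ref{elim-bij} and would make the argument circular. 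What is actually available is only the family \((a,q)\mapsto f^\alpha_{\beta,s_\beta(\alpha)}(e\cdot_\beta a,q)\) with \(a\colon\R^{(s^\infty)}\), and by Lemma \ref{power-idem} this is merely a regular epimorphism onto \(P_\alpha(\R^{(s^\infty)})\) with a nontrivial kernel pair --- so your ``unique preimage \(q_p\)'' does not exist, and defining \(x_\alpha(p)\) forces a descent along this epimorphism.

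That descent is exactly where the paper's work lies and is what your sketch defers as ``book-keeping'': one must first prove the two-argument identities for \(x_\alpha^\beta(p,q)\) (commutation with \(x_\gamma(r)\) for \(\gamma\notin V\), additivity in \(q\)), then the cross-neighbor identity \(x_\alpha^\gamma\bigl(p,f^{s_\gamma(\alpha)}_{s_\gamma(\beta),s_\beta(\alpha)}(q,r)\bigr)=x_\alpha^\beta\bigl(f^\beta_{\gamma,s_\gamma(\beta)}(p,q),r\bigr)\) via the explicit rank-\(3\) configurations (the tables), which yields the scaling relation \(x_\alpha^\gamma(e\cdot_\gamma ab,p)=x_\alpha^\gamma\bigl(e\cdot_\gamma a,p\cdot_{s_\gamma(\alpha)}b^{2\frac{\alpha\cdot\gamma}{\gamma\cdot\gamma}}\bigr)\) needed to descend and define \(x_\alpha\). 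Relatedly, your claim that independence of the auxiliary data is ``automatic a posteriori'' has the logic backwards: the identity \(x_\alpha^\beta(p,q)=x_\alpha\bigl(f^\alpha_{\beta,s_\beta(\alpha)}(p,q)\bigr)\) for \emph{all} neighbors \(\beta\) is an input to the verification of the remaining commutator relations (orthogonal pairs whose positive span meets \(V\), and pairs of new roots, handled by induction on the angle), not a consequence of it; and these cases are not instances of Lemma \ref{vacuous-rel}, which only covers pairs of surviving roots whose positive cone avoids \(V\). So the proposal is a correct outline of the paper's strategy, but the step it treats as routine is the proof.
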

\begin{proof}
	Without loss of generality
	\(
		V' = 0
	\). By lemma \ref{elim-sur} we already know that
	\(
		F_V^0
	\) is an epimorphism. In order to find the inverse homomorphism we are going to construct the missing root morphisms
	\[
		x_\alpha
		\colon P_\alpha(\R^{(s^\infty)})
		\to \stlin_{G, T, \Phi \setminus V}(
			\R^{(s^\infty)}
		)
	\]
	for
	\(
		\alpha \in \Phi \cap V
	\). For any root
	\(
		\beta \in \Phi \setminus V
	\) neighbor to \(\alpha\) (it exists by lemma \ref{small-subsp}(1)) let
	\[
		x_\alpha^\beta(p, q)
		= [
			x_\beta(p),
			x_{s_\beta(\alpha)}(q)
		]\,
		\prod_{
				\gamma \notin V
		}
			x_\gamma\bigl(
				\dotminus f_{\beta, s_\beta(\alpha)}
					^\gamma
					(p, q)
			\bigr)
		\colon \stlin_{G, T, \Phi \setminus V}(
			\R^{(s^\infty)}
		)
	\]
	for
	\(
		p \colon P_\beta(\R^{(s^\infty)})
	\) and
	\(
		q \colon P_{s_\beta(\alpha)}(\R^{(s^\infty)})
	\), so
	\[
		F_V^0(x_\alpha^\beta(p, q))
		= x_\alpha\bigl(
			f_{\beta, s_\beta(\alpha)}
				^\alpha
				(p, q)
		\bigr).
	\]

	Let us check that
	\[
		[x_\alpha^\beta(p, q), x_\gamma(r)]
		= \prod_\delta
			x_\delta\bigl(
				f_{\alpha, \gamma}^\delta\bigl(
					f_{\beta, s_\beta(\alpha)}
						^\alpha
						(p, q),
					r
				\bigr)
			\bigr)
	\]
	if
	\(
		\delta \in \Phi \setminus V
	\). This easily follows by evaluating
	\[
		\up{
			[x_\beta(p), x_{s_\beta(\alpha)}(q)]
		}{x_\gamma(r)}\,
		x_\gamma(\dotminus r)
	\]
	in two ways as in the proof of lemma \ref{vacuous-rel} if
	\(
		V
		\cap (
			\mathbb R \alpha
			+ \mathbb R \beta
			+ \mathbb R \gamma
		)
	\) is one-dimensional and the roots \(\alpha\), \(\beta\), \(\gamma\) are linearly dependent. Otherwise choose by lemma \ref{small-subsp}(2) or \ref{small-subsp}(5) a neighbor \(\delta\) of \(\gamma\) such that
	\(
		\delta
		\notin \mathbb R \alpha
		+ \mathbb R \beta
		+ \mathbb R \gamma
	\) and
	\[
		(
			\mathbb R \alpha
			+ \mathbb R \beta
			+ \mathbb R \gamma
			+ \mathbb R \delta
		)
		\cap V
		= (
			\mathbb R \alpha
			+ \mathbb R \beta
			+ \mathbb R \gamma
		)
		\cap V.
	\]
	Evaluating
	\[
		\up{
			[
				x_\delta(r),
				x_{s_\delta(\gamma)}(s)
			]
		}{x_\alpha^\beta(p, q)}\,
		x_\alpha^\beta(p, q)^{-1}
	\]
	in two ways using the known commutator relations for roots from
	\(
		\mathbb R_{\geq 0} \alpha
		+ \mathbb R_{\geq 0} \gamma
		+ \mathbb R_{\geq 0} \delta
	\) we get the required identity by lemmas \ref{root-units}(5) and \ref{power-idem}.

	We are ready to prove that
	\(
		x_\alpha^\beta(p, q \dotplus q')
		= x_\alpha^\beta(p, q)\,
		x_\alpha^\beta(p, q')
	\). Indeed, this easily follows from expanding both sides of
	\[
		\bigl[
			x_\beta(p),
			x_{s_\beta(\alpha)}(q \dotplus q')
		\bigr]
		= \bigl[
			x_\beta(p),
			x_{s_\beta(\alpha)}(q)
		\bigr]\,
		\up{x_{s_\beta(\alpha)}(q)}{
			\bigl[
				x_\beta(p),
				x_{s_\beta(\alpha)}(q')
			\bigr]
		}
	\]
	using the known commutator formulae.

	Now choose using lemma \ref{small-subsp}(1, 2) a rank \(3\) irreducible root subsystem
	\(
		\alpha \in \Psi \subseteq \Phi
	\) such that
	\(
		\mathbb R \Psi \cap V = \mathbb R \alpha
	\) (such \(\Psi\) may be chosen to contain any given
	\(
		\beta \in \Phi \setminus V
	\) neighbor to \(\alpha\)). Take two neighbors
	\(
		\beta, \gamma \in \Psi
	\) of \(\alpha\) such that they are also neighbors of each other from the following table up to the action of the Weyl group (namely, if \(\Psi\) is not of the type
	\(
		\mathsf A_3
	\) we require that \(\alpha\) or \(\gamma\) is not in
	\(
		\mathsf A_3 \subseteq \Psi
	\)).
	\begin{center}
		\begin{tabular}{|c|c|c|c|}
		\hline
			\(\Psi\)
			& \(\alpha\)
			& \(\beta\)
			& \(\gamma\)
		\\ \hline \hline
			\(\mathsf A_3\)
			& \(\e_1 + \e_2\)
			& \(\e_1 + \e_3\)
			& \(\e_2 + \e_3\)
		\\ \hline
			not \(\mathsf A_3\)
			& \(\e_1\)
			& \(\e_1 + \e_2\)
			& \(\e_1 + \e_3\)
		\\ \hline
			not \(\mathsf A_3\)
			& \(\e_1 + \e_2\)
			& \(\e_1 + \e_3\)
			& \(\e_1\)
		\\ \hline
		\end{tabular}
	\end{center}
	Note that in all cases \(\gamma\) and
	\(
		s_\beta(\alpha)
	\) are orthogonal and they span a reducible saturated root subsystem (of type
	\(
		2 \mathsf A_1
	\) or
	\(
		\mathsf A_1 + \mathsf{BC}_1
	\)), so the corresponding root morphisms commute. Also,
	\(
		4 \frac{\alpha \cdot \beta}
			{\beta \cdot \beta}\,
		\frac{\beta \cdot \gamma}
			{\gamma \cdot \gamma}
		= 2 \frac{\alpha \cdot \gamma}
			{\gamma \cdot \gamma}
	\). Evaluating the expression
	\[
		\bigl[
			\up{x_\gamma(p)}{x_{s_\gamma(\beta)}(q)},
			\up{x_\gamma(p)}{x_{s_\beta(\alpha)}(r)}
		\bigr]
	\]
	in two ways as in the proof of lemma \ref{vacuous-rel} using the known commutator relations we obtain
	\[
		x_\alpha^\gamma\bigl(
			p,
			f_{s_\gamma(\beta), s_\beta(\alpha)}
				^{s_\gamma(\alpha)}
				(q, r)
		\bigr)
		= x_\alpha^\beta\bigl(
			f_{\gamma, s_\gamma(\beta)}^\beta(p, q),
			r
		\bigr).
	\]
	The image of this identity in
	\(
		\stlin_{G, T, \Phi}(\R^{(s^\infty)})
	\) also gives
	\[
		f_{\gamma, s_\gamma(\alpha)}^\alpha\bigl(
			p,
			f_{s_\gamma(\beta), s_\beta(\alpha)}
				^{s_\gamma(\alpha)}
				(q, r)
		\bigr)
		= f_{\beta, s_\beta(\alpha)}^\alpha\bigl(
			f_{\gamma, s_\gamma(\beta)}
				^\beta
				(p, q),
			r
		\bigr).
	\]
	It follows that
	\begin{align*}
		x_\alpha^\gamma\bigl(
			e \cdot_\gamma a b,
			f_{s_\gamma(\beta), s_\beta(\alpha)}
				^{s_\gamma(\alpha)}
				(q, r)
		\bigr)
		&= x_\alpha^\beta\bigl(
			f_{\gamma, s_\gamma(\beta)}
				^\beta
				(e \cdot_\gamma a b, q),
			r
		\bigr)
		= x_\alpha^\beta\bigl(
			f_{\gamma, s_\gamma(\beta)}
				^\beta
				(
					e \cdot_\gamma a,
					q \cdot_{s_\gamma(\beta)} b^{
						2 \frac{\beta \cdot \gamma}
							{\gamma \cdot \gamma}
					}
				),
			r
		\bigr)
	\\
		&= x_\alpha^\gamma\bigl(
			e \cdot_\gamma a,
			f_{s_\gamma(\beta), s_\beta(\alpha)}
				^{s_\gamma(\alpha)}
				(q, r) \cdot_{s_\gamma(\alpha)} b^{
					2 \frac{\alpha \cdot \gamma}
					{\gamma \cdot \gamma}
				}
		\bigr),
	\end{align*}
	for any
	\(
		e \in P_\gamma(K_s)^*
	\), so
	\[
		x_\alpha^\gamma(e \cdot_\gamma a b, p)
		= x_\alpha^\gamma\bigl(
			e \cdot_\gamma a,
			p \cdot_{s_\gamma(\alpha)} b^{
				2 \frac{\alpha \cdot \gamma}
				{\gamma \cdot \gamma}
			}
		\bigr)
	\]
	by lemmas \ref{root-units}(5) and \ref{power-idem}. Finally, the same lemmas imply that there is a unique homomorphism
	\[
		x_\alpha
		\colon P_\alpha(\R^{(s^\infty)})
		\to \stlin_{G, T, \Psi \setminus V}(
			\R^{(s^\infty)}
		)
	\]
	such that
	\(
		x_\alpha^\gamma(e \cdot_\gamma a, q)
		= x_\alpha\bigl(
			f_{\gamma, s_\gamma(\alpha)}
				^\alpha
				(e \cdot_\gamma a, q)
		\bigr)
	\).

	We still have to check that
	\(
		x_\alpha
	\) is independent on the choices of \(\Psi\), \(\gamma\), and \(e\); and that
	\(
		x_\alpha^\beta(p, q)
		= x_\alpha\bigl(
			f_{\beta, s_\beta(\alpha)}^\alpha(p, q)
		\bigr)
	\) for all neighbors \(\beta\) of \(\alpha\) (along with the remaining commutator formulae). For the roots
	\(
		\beta, \gamma \in \Psi
	\) from above we have
	\[
		x_\alpha\bigl(
			f_{\beta, s_\beta(\alpha)}^\alpha\bigl(
				f_{\gamma, s_\gamma(\beta)}
					^\beta
					(e \cdot_\gamma a, q),
				r
			\bigr)
		\bigr)
		= x_\alpha\bigl(
			f_{\gamma, s_\gamma(\alpha)}
				^\alpha
				\bigl(
					e \cdot_\gamma a,
					f_{
						s_\gamma(\beta),
						s_\beta(\alpha)
					}^{s_\gamma(\alpha)}(q, r)
				\bigr)
		\bigr)
		= x_\alpha^\beta\bigl(
			f_{\gamma, s_\gamma(\beta)}
				^\beta
				(e \cdot_\gamma a, q),
			r
		\bigr),
	\]
	so lemmas \ref{root-units}(5) and \ref{power-idem} implies
	\(
		x_\alpha^\beta(p, q)
		= x_\alpha\bigl(
			f_{\beta, s_\beta(\alpha)}^\alpha(p, q)
		\bigr)
	\). Using similar calculations in another direction, we have
	\[
		x_\alpha^\gamma\bigl(
			p,
			f_{s_\gamma(\beta), s_\beta(\alpha)}
				^{s_\gamma(\alpha)}
				(q, r)
		\bigr)
		= x_\alpha\bigl(
			f_{\beta, s_\beta(\alpha)}
				^\alpha
				\bigl(
					f_{\gamma, s_\gamma(\beta)}
						^\beta
						(p, q),
					r
				\bigr)
		\bigr)
		= x_\alpha\bigl(
			f_{\gamma, s_\gamma(\alpha)}^\alpha\bigl(
				p,
				f_{s_\gamma(\beta), s_\beta(\alpha)}
					^{s_\gamma(\alpha)}
					(q, r)
			\bigr)
		\bigr),
	\]
	so
	\(
		x_\alpha^\gamma(p, q)
		= x_\alpha\bigl(
			f_{\gamma, s_\gamma(\alpha)}^\alpha(p, q)
		\bigr)
	\) by the same lemmas. By considering all \(4\) or \(6\) neighbors of \(\alpha\) in \(\Psi\) and using the skew-commutativity
	\(
		f_{\mu, \nu}^\lambda(p, q)
		= \dotminus f_{\nu, \mu}^\lambda(q, p)
	\) we obtain the same identity
	\(
		x_\alpha^\delta(p, q)
		= x_\alpha\bigl(
			f_{\delta, s_\delta(\alpha)}^\alpha(p, q)
		\bigr)
	\) for all neighbors \(\delta\) of \(\alpha\) in \(\Psi\). Now it is clear that
	\(
		x_\alpha
	\) is independent on \(\gamma\) and \(e\). It also does not depend on \(\Psi\) by lemma \ref{small-subsp}(4). Clearly, if \(\alpha\) is ultrashort in a component of \(\Phi\) of type
	\(
		\mathsf{BC}_\ell
	\), then
	\(
		x_{2 \alpha}(p) = x_\alpha(p)
	\) for
	\(
		p \colon P_\alpha(\R^{(s^\infty)})
	\).

	Now we have all root morphisms. Let us finish the remaining cases of the commutator formula for
	\(
		[x_\alpha(p), x_\beta(q)]
	\) if
	\(
		\alpha, \beta \in \Phi \setminus V
	\) and
	\(
		(
			\mathbb R_{> 0} \alpha
			+ \mathbb R_{> 0} \beta
		)
		\cap \Phi
		\cap V
	\) is non-trivial. The only remaining case is when
	\(
		\alpha \perp \beta
	\). Choose a rank \(3\) saturated root subsystem
	\(
		\alpha, \beta \in \Psi \subseteq \Phi
	\) such that
	\(
		\mathbb R \Psi \cap V
	\) is one-dimensional using lemma \ref{small-subsp}(2). Take a root \(\gamma\) neighbor to \(\alpha\) as in the table from the proof of lemma \ref{vacuous-rel}. Evaluating
	\[
		\bigl[
			\up{x_\gamma(p)}{x_{s_\gamma(\alpha)}(q)},
			\up{x_\gamma(p)}{x_{s_\gamma(\beta)}(r)}
		\bigr]
	\]
	in two ways and applying lemmas \ref{root-units}(5) and \ref{power-idem} we get the required identity.

	By the above and lemma \ref{vacuous-rel} we have all commutator relations except the ones for
	\(
		[x_\alpha(p), x_\beta(q)]
	\), where
	\(
		\alpha, \beta \in \Phi \cap V
	\) are linearly independent. In this case choose
	\(
		\gamma \in \Phi \setminus V
	\) neighbor to \(\alpha\) by lemma \ref{small-subsp}(3). The required identity follows by induction on
	\(
		\angle(\alpha, \beta)
	\) evaluating
	\[
		\up{[x_\gamma(p), x_{s_\gamma(\alpha)}(q)]}
			{x_\beta(r)}\,
		x_\beta(\dotminus r)
	\]
	in two ways and applying lemmas \ref{root-units}(5) and \ref{power-idem}.

	To sum up, we constructed a homomorphism
	\(
		\stlin_{G, T, \Phi}(\R^{(s^\infty)})
		\to \stlin_{G, T, \Phi \setminus V}(
			\R^{(s^\infty)}
		)
	\) left inverse to
	\(
		F_V^0
	\). Since
	\(
		F_V^0
	\) is also an epimorphism, it is invertible.
\end{proof}

\section{Crossed module structure}

In this section we are going to prove that the composite homomorphism
\(
	\stlin_{G, T, \Phi}(\R^{(s^\infty)})
	\to G(\R_s^{\Ind})
\) is a crossed module in a unique way.

\subsection{Perfect group objects}

Recall that a group object \(X\) in an infinitary pretopos is \textit{perfect} if it is generated by the commutator morphism
\(
	[{-}, {=}] \colon X \times X \to X
\).

\begin{lemma} \label{perf-x-mod}
	Let
	\(
		\mathbf C
	\) be an infinitary pretopos.
	\begin{enumerate}

		\item
		For any crossed module
		\(
			\delta \colon X \to G
		\) in
		\(
			\mathbf C
		\) the kernel
		\(
			\Ker(\delta) \leq X
		\) is central, i.e. the commutator morphism
		\(
			X \times \Ker(\delta) \to X
		\) is trivial, and the image
		\(
			\Image(\delta) \leq G
		\) is normal, i.e. the conjugation morphism
		\(
			G \times \Image(\delta) \to G
		\) takes values in
		\(
			\Image(\delta)
		\).

		\item
		If
		\(
			f \colon X \to G
		\) is a perfect central extension, then there is a unique morphism
		\(
			\langle {-}, {=} \rangle
			\colon G \times G
			\to X
		\) such that
		\(
			\bigl\langle f(x), f(y) \bigr\rangle
			= [x, y]
		\) and
		\(
			f\bigl(\langle g, h \rangle\bigr) = [g, h]
		\).

		\item
		Let
		\(
			\delta \colon X \to G
		\) and
		\(
			\delta' \colon X' \to G'
		\) be crossed modules, \(P\) be an object in
		\(
			\mathbf C
		\). Suppose that
		\(
			u \colon P \times X \to X'
		\) and
		\(
			v \colon P \times G \to G'
		\) are morphisms multiplicative on their second arguments,
		\(
			\delta'(u(p, x)) = v(p, \delta(x))
		\), and \(X\) is perfect. Then
		\(
			u(p, \up g x) = \up{v(p, g)}{u(p, x)}
		\). In particular, the action of \(G\) on \(X\) making \(\delta\) a crossed module is unique.

		\item
		Let
		\(
			\delta \colon X \to G
		\) and
		\(
			\delta' \colon X' \to G
		\) be crossed modules such that \(X\) is perfect. Then there is at most one homomorphism
		\(
			f \colon X \to X'
		\) such that
		\(
			\delta' \circ f = \delta
		\), it is necessarily \(G\)-equivariant.

	\end{enumerate}
\end{lemma}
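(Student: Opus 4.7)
The approach throughout is to work in the internal geometric logic of the infinitary pretopos \(\mathbf C\), so that element-level arguments from ordinary group theory translate directly into subobject inclusions. The foundation is (1). Centrality of \(\Ker(\delta)\) drops out of the Peiffer identity \(\up x y = \up{\delta(x)} y\): when \(x \colon \Ker(\delta)\), the right-hand side becomes \(\up 1 y = y\), giving \([x, y] = 1\). Normality of \(\Image(\delta)\) is the one-line application of \(\delta\)-equivariance, \(\up g {\delta(x)} = \delta(\up g x)\).

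For (2), the plan is to construct \(\langle {-}, {=} \rangle\) as the factorization of the commutator morphism \([{-}, {=}] \colon X \times X \to X\) through the regular epimorphism \(f \times f \colon X \times X \to G \times G\). In a pretopos every epimorphism is effective, so this factorization exists once one checks invariance under the kernel pair: given \(x' = xz\) and \(y' = yw\) with \(z, w \colon \Ker(f)\), centrality of \(\Ker(f)\) from (1) yields \([xz, yw] = [x, y]\) because the central factors cancel inside the commutator. The identities \(\langle f({-}), f({=})\rangle = [{-}, {=}]\) and \(f(\langle g, h\rangle) = [g, h]\), together with the uniqueness of \(\langle {-}, {=} \rangle\), then follow by chasing through the epimorphism \(f \times f\).

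For (3), fix \(p\) and \(g\) and compare the two homomorphisms \(\phi, \psi \colon X \to X'\) defined by \(\phi(x) = u(p, \up g x)\) and \(\psi(x) = \up{v(p, g)}{u(p, x)}\). Both are genuine homomorphisms in \(x\), since \(u\) is multiplicative in its second argument and the \(G'\)-action on \(X'\) is by automorphisms. Combining \(\delta' \circ u = v \circ (\id_P \times \delta)\) with the equivariance of \(\delta\) and \(\delta'\) gives \(\delta' \circ \phi = \delta' \circ \psi\), so \(\varepsilon = \phi \cdot \psi^{-1}\) lands in \(\Ker(\delta')\), which is central in \(X'\) by (1). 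But \(\varepsilon\) is then a homomorphism from \(X\) to an abelian subgroup, and perfectness of \(X\) forces it to vanish (it factors through \(X / [X, X] = 1\), or equivalently its equalizer with the trivial map contains the image of \([{-}, {=}]\)). The uniqueness of the \(G\)-action making \(\delta\) a crossed module is obtained by applying this to two candidate actions with \(X' = X\), \(G' = G\), and \(u, v\) both identities.

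For (4), the same abelianization argument yields \(f = f'\) at once: \(f(x)\, f'(x)^{-1}\) lies in the central \(\Ker(\delta')\), so the resulting homomorphism from \(X\) to an abelian group is trivial by perfectness. Equivariance of the unique \(f\) is then immediate from (3) applied with \(P\) terminal, \(u = f\), and \(v = \id_G\); the hypothesis \(\delta' \circ u = v \circ \delta\) reduces to \(\delta' \circ f = \delta\), which is given. I do not expect any serious obstacle; the only subtlety is rendering ``\(X\) is perfect'' categorically as ``every homomorphism from \(X\) to an abelian group is trivial'', which is exactly the definition recalled at the start of this section.
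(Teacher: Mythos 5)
Your proposal is correct and, for items (1)--(3), follows essentially the same route as the paper: the paper dismisses (1)--(2) as easy (your kernel-pair descent argument for (2), using centrality of \(\Ker f\) to get \([xz,yw]=[x,y]\), is the intended one, and it rightly does not need perfectness), and for (3) the paper likewise forms the discrepancy \(\varepsilon(p,g,x)=\up{v(p,g)}{u(p,x)}\,u(p,\up g x)^{-1}\), checks it is killed by \(\delta'\) hence central, is multiplicative in \(x\), annihilates commutators, and concludes by perfectness via lemma \ref{gen-rel}(3). The one genuine divergence is (4): the paper uses the lifted commutator morphism of (2) applied to \(\delta'\) (via \(\Image(\delta)\leq\Image(\delta')\)) to show that \(f([x,y])=\langle\delta(x),\delta(y)\rangle\) is canonically determined, whereas you compare two candidate homomorphisms \(f,f'\) directly and show \(x\mapsto f(x)f'(x)^{-1}\) is a homomorphism with values in the central \(\Ker(\delta')\), hence kills commutators and vanishes by perfectness. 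Your route is slightly more self-contained (it uses only (1) and perfectness, and avoids applying (2) to \(X'\to\Image(\delta')\), where \(X'\) is not assumed perfect -- harmless in the paper since perfectness is not needed there, but your version sidesteps the point); the paper's route additionally exhibits the explicit canonical formula for \(f\) on commutators. Two small cautions: ``fix \(p\) and \(g\)'' must be read as working with free variables (equivalently, with the parameter object \(P\times G\) as in lemma \ref{gen-rel}(3)), which your internal-logic framing does license; and the paper's definition of perfect is ``generated by the commutator morphism,'' not ``every homomorphism to an abelian group object is trivial'' -- the two are equivalent, and your equalizer formulation already gives exactly the argument needed under the paper's definition.
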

\begin{proof}
	The first two claims are easy. To prove (3) let
	\[
		\varepsilon(p, g, x)
		= \up{v(p, g)}{u(p, x)}\, u(p, \up g x)^{-1}.
	\]
	Then
	\[
		\delta'(\varepsilon(p, g, x))
		= \up{v(p, g)}{v(p, \delta(x))}\,
			v(p, \up g {\delta(x)})^{-1}
		= 1,
	\]
	so
	\(
		\varepsilon(p, g, x)
	\) is central,
	\[
		\varepsilon(p, g, x y)
		= \up{v(p, g)}{u(p, x)}\,
			\varepsilon(p, g, y)\,
			u(p, \up g x)^{-1}
		= \varepsilon(p, g, x)\,
			\varepsilon(p, g, y),
	\]
	and
	\[
		\varepsilon(p, g, [x, y])
		= [\varepsilon(p, g, x), \varepsilon(p, g, y)]
		= 1.
	\]
	Since \(X\) is perfect, \(\varepsilon\) is trivial by lemma \ref{gen-rel}(3) applied to the generating morphism
	\(
		[{-}, {=}] \colon X \times X \to X
	\).

	Finally, let us prove (4). Take such a homomorphism
	\(
		f \colon X \to X'
	\), then
	\(
		\Image(\delta) \leq \Image(\delta')
	\). We have
	\[
		f([x, y])
		= [f(x), f(y)]
		= \bigl\langle
			\delta'(f(x)),
			\delta'(f(y))
		\bigr \rangle
		= \bigl\langle
			\delta(x),
			\delta(y)
		\bigr\rangle
	\]
	using (2), so \(f\) is unique by lemma \ref{gen-rel}(3). The \(G\)-equivariance follows from (3).
\end{proof}

\begin{lemma} \label{st-perf}
	The Steinberg group
	\(
		\stlin_{G, T, \Phi}(\R^{(s^\infty)})
	\) is perfect.
\end{lemma}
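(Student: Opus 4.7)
Let $H = \stlin_{G, T, \Phi}(\R^{(s^\infty)})$ and let $H' \leq H$ be the subobject generated (in $\mathbf U_K$) by the image of the commutator morphism $[{-}, {=}] \colon H \times H \to H$; equivalently, $H'$ is the smallest subgroup object of $H$ containing every commutator. Since $H$ is generated by the root morphisms $x_\alpha$ for $\alpha \in \Phi$, to prove $H = H'$ it suffices to establish $\Image(x_\alpha) \subseteq H'$ for every $\alpha \in \Phi$.

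The plan is to argue by induction on a total order on $\Phi$ refining height within each irreducible component, processing the higher roots first. For each $\alpha \in \Phi$, I would choose a neighbor $\beta \in \Phi$ of $\alpha$ (such $\beta$ exist since every component of $\Phi$ has rank at least $3$ and any rank-$2$ subsystem containing $\alpha$ provides one, cf.\ lemma \ref{small-subsp}(1) with $V = 0$) with $\alpha \in \mathbb{N}_+\beta + \mathbb{N}_+ s_\beta(\alpha)$ and such that every other root $\delta$ in this positive cone is strictly higher than $\alpha$ in the chosen order. The Chevalley commutator formula, rearranged so that $x_\alpha$ is isolated (the reordering introduces only further terms $x_{\delta'}(\cdots)$ for $\delta'$ in the positive cone, handled by the induction hypothesis), reads
\[
	x_\alpha\bigl(f^\alpha_{\beta, s_\beta(\alpha)}(p, q)\bigr)
	= [x_\beta(p), x_{s_\beta(\alpha)}(q)]
	\cdot \prod_{\delta \neq \alpha}
		x_\delta\bigl(
			f^\delta_{\beta, s_\beta(\alpha)}(p, q)
		\bigr)^{\pm 1},
\]
and every factor on the right lies in $H'$: the commutator by definition, the remaining $x_\delta(\cdots)$ by induction. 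Therefore $x_\alpha\bigl(f^\alpha_{\beta, s_\beta(\alpha)}(p, q)\bigr) \in H'$ for all $p, q$. To upgrade this to $\Image(x_\alpha) \subseteq H'$ I would specialize $p = e \cdot_\beta r$ for a Weyl element $e \in P_\beta(K_s)^*$ supplied by the isotropic pinning: lemma \ref{root-units}(5) gives that $f^\alpha_{\beta, s_\beta(\alpha)}(e, {-}) \colon P_{s_\beta(\alpha)} \to P_\alpha$ is a scheme isomorphism, and combining with the scaling identity for $f$ and lemma \ref{power-idem} shows that $(r, q) \mapsto f^\alpha_{\beta, s_\beta(\alpha)}(e \cdot_\beta r, q)$ is a regular epimorphism from $\R^{(s^\infty)} \times P_{s_\beta(\alpha)}(\R^{(s^\infty)})$ onto $P_\alpha(\R^{(s^\infty)})$. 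Lemma \ref{gen-rel}(3) applied to this generating epimorphism then forces $\Image(x_\alpha) \subseteq H'$.

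The main obstacle is the combinatorial check that an appropriate neighbor $\beta$ can be selected for every $\alpha$, together with setting up the base case of the induction (the highest root in each component). The base case is typically handled by choosing a distinct pair of positive roots $(\beta, \gamma)$ summing to $\alpha$ with no other root in their positive cone; for instance, in type $\mathsf A_n$ the highest root $e_1 - e_{n+1}$ is directly $[x_{e_1 - e_n}, x_{e_n - e_{n+1}}]$ by the commutator formula. This requires a finite case analysis over the possible rank-$2$ subsystems ($\mathsf A_2$, $\mathsf B_2$, $\mathsf{BC}_2$, $\mathsf G_2$) of $\Phi$, but as their commutator expansions are explicit, the verification is routine once the ordering on $\Phi$ is fixed.
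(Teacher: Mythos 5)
Your reduction of the problem to showing $\Image(x_\alpha)\subseteq H'$ for each root, and your treatment of roots via a neighbor $\beta$, an invertible element $e \in P_\beta(K_s)^*$, lemma \ref{root-units}(5) and lemma \ref{power-idem}, is exactly the paper's mechanism for roots lying in a saturated subsystem of type $\mathsf A_2$ (where the commutator has a single term and no induction is needed at all). The genuine gap is in the remaining case, which is precisely where the paper's proof does something different: roots not contained in any saturated $\mathsf A_2$ subsystem, i.e.\ the long roots $2e_i$ of a component of type $\mathsf C_\ell$ (and the analogous roots in type $\mathsf{BC}_\ell$) and the short roots of type $\mathsf B_\ell$. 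For the highest root $\alpha = 2e_1$ of $\mathsf C_\ell$ every admissible neighbor $\beta$ is of the form $e_1 \pm e_j$, and the commutator $[x_\beta(p), x_{s_\beta(\alpha)}(q)]$ then carries a companion term at a root of \emph{strictly smaller} height ($e_1 + e_j$ or $e_1 - e_j$); so with your ``height order, higher first'' this root cannot be reached by your inductive step and must be a base case. But your base-case recipe --- a pair of positive roots summing to $\alpha$ with no other root in their cone, here $(e_1-e_2)+(e_1+e_2)=2e_1$ --- does not work: these two roots are orthogonal, hence not neighbors in the paper's sense, so lemma \ref{root-units}(5) gives no surjectivity for $f^{2e_1}_{e_1-e_2,\,e_1+e_2}(e,{-})$, and surjectivity genuinely fails. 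Already for split $\symp_{2\ell}$ the relevant structure constant is $2$, so $[U_{e_1-e_2}, U_{e_1+e_2}]$ lands in $x_{2e_1}(2K)$ and vanishes in characteristic $2$; for non-split (e.g.\ unitary) groups the map is a trace-type map that need not be onto $P_{2e_1}$. So the failure is not a ``routine finite case analysis'' but the actual crux of the lemma.

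The paper's proof is built around this obstruction: if $\alpha$ lies in no saturated $\mathsf A_2$, it picks a neighbor $\beta$ inside the subsystem of type $\mathsf D_\ell$, sets $\gamma = s_\beta(\alpha)$, observes that $s_\gamma(\beta)$ again lies in the $\mathsf D_\ell$ part (hence $U_{s_\gamma(\beta)}(\R^{(s^\infty)})$ is already in the derived subgroup by the $\mathsf A_2$ case), and shows that $(x,y,z)\mapsto [x,y]\,z$ is an epimorphism from $U_\beta \times U_\gamma \times U_{s_\gamma(\beta)}$ onto the two-root unipotent subgroup $U_{\{\alpha,\,s_\gamma(\beta)\}}$, using \ref{root-units}(5) for the non-orthogonal pair $(\beta, \alpha)$ together with \ref{power-idem} (this is where the exponent $k=2$ in \ref{power-idem} is needed, since the $\alpha$-component of the commutator scales as $a^2$). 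Modulo commutators and the already-handled factor $z$, this puts $U_\alpha$ into the derived subgroup. Your argument would be repaired by replacing the height-based order and the orthogonal-pair base case with exactly this two-stage structure (``roots in the $\mathsf D_\ell$ part first, then the rest via $[x,y]z$''); as written, it has a hole at the hardest roots.
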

\begin{proof}
	Take a root
	\(
		\alpha \in \Phi
	\). If \(\alpha\) lies in a saturated root subsystem
	\(
		\alpha \in \Psi \subseteq \Phi
	\) of type
	\(
		\mathsf A_2
	\), then take its neighbor
	\(
		\beta \in \Psi
	\) and note that
	\[
		[{-}, {=}]
		\colon U_\beta(\R^{(s^\infty)})
			\times U_{s_\beta(\alpha)}(\R^{(s^\infty)})
		\to U_\alpha(\R^{(s^\infty)})
	\] is an epimorphism by lemmas \ref{root-units}(5) and \ref{power-idem}.

	Now suppose that such \(\Psi\) does not exist, i.e. \(\alpha\) lies in a component of type
	\(
		\mathsf B_\ell
	\),
	\(
		\mathsf C_\ell
	\), or
	\(
		\mathsf{BC}_\ell
	\) and not in its root subsystem of type
	\(
		\mathsf D_\ell
	\). Take its neighbor \(\beta\) lying the root subsystem of type
	\(
		\mathsf D_\ell
	\) and let
	\(
		\gamma = s_\beta(\alpha)
	\). Since
	\(
		s_\gamma(\beta)
	\) also lies in the root subsystem of type
	\(
		\mathsf D_\ell
	\), the root subgroup
	\(
		U_{s_\gamma(\beta)}(\R^{(s^\infty)})
	\) is contained in the derived subgroup of
	\(
		\stlin_{G, T, \Phi}(\R^{(s^\infty)})
	\) by the proved case. Also, the morphism
	\[
		[\![
			[x, y] z
		]\!]
		\colon U_\beta(\R^{(s^\infty)})
			\times U_\gamma(\R^{(s^\infty)})
			\times U_{s_\gamma(\beta)}(\R^{(s^\infty)})
		\to U_{\{\alpha, s_\gamma(\beta)\}}(
			\R^{(s^\infty)}
		)
	\]
	is an epimorphism again by lemmas \ref{root-units}(5) and \ref{power-idem}.
\end{proof}

\subsection{Extended Steinberg groups}

Consider the \textit{extended Steinberg group}
\(
	\gstlin_{G, T, \Phi}(\R_s^{\Ind})
\), it is the group object in
\(
	\mathbf U_K
\) with generators
\(
	x_\alpha(p)
\) for
\(
	p \colon P_\alpha(\R_s^{\Ind})
\),
\(
	\alpha \in \Phi
\) and
\(
	d(g)
\) for
\(
	g \colon L(\R_s^{\Ind})
\). The relations are
\begin{itemize}

	\item
	\(
		x_\alpha(p)\, x_\alpha(q)
		= x_\alpha(p \dotplus q)
	\);

	\item
	\(
		d(g)\, d(h) = d(g h)
	\);

	\item
	\(
		x_{2 \alpha}(p) = x_\alpha(p)
	\) for ultrashort \(\alpha\) in a component of type
	\(
		\mathsf{BC}_\ell
	\) and
	\(
		p \colon P_{2 \alpha}(\R_s^{\Ind})
	\);

	\item
	\(
		\up{d(g)}{x_\alpha(p)} = x_\alpha(\up g p)
	\), where
	\(
		\up g p \colon P_\alpha(\R_s^{\Ind})
	\) is given by the scheme morphism such that
	\(
		\up g {t_\alpha(p)} = t_\alpha(\up g p)
	\) for
	\(
		g \in L(R)
	\),
	\(
		p \in P_\alpha(R)
	\),
	\(
		R \in \Ring_{K_s}
	\);

	\item
	\(
		[x_\alpha(p), x_\beta(q)]
		= \prod_\gamma
			x_\gamma(f_{\alpha, \beta}^\gamma(p, q))
	\) for linearly independent \(\alpha\) and \(\beta\);

	\item
	\(
		t_\alpha(p)\,
			t_{-\alpha}(q)\,
			t_\alpha(r)
		= t_{-\alpha}(p')\,
			t_\alpha(q')\,
			t_{-\alpha}(r')\,
			d(g)
		\vdash x_\alpha(p)\,
			x_{-\alpha}(q)\,
			x_\alpha(r)
		= x_{-\alpha}(p')\,
			x_\alpha(q')\,
			x_{-\alpha}(r')\,
			d(g)
	\).

\end{itemize}
This group object is preserved under base changes and functorial on \(s\) in the same way as
\(
	\R_s^{\Ind}
\). Note that there are well-defined morphisms
\[
	\up{(-)}{(=)}
	\colon L(\R_s^{\Ind})
		\times P_\alpha(\R^{(s^\infty)})
	\to P_\alpha(\R^{(s^\infty)}),
\quad
	f_{\alpha \beta}^\gamma
	\colon P_\alpha(\R_s^{\Ind})
		\times P_\beta(\R^{(s^\infty)})
	\to P_\gamma(\R^{(s^\infty)})
\]
induced by the actions of
\(
	(P_\alpha \rtimes L)(\R_s^{\Ind})
\) on
\(
	(P_\alpha \rtimes L)(\R^{(s^\infty)})
\) and of
\(
	U_\Sigma(\R_s^{\Ind})
\) on
\(
	U_\Sigma(\R^{(s^\infty)})
\) for
\(
	\Sigma
	= (
		\mathbb R_{\geq 0} \alpha
		+ \mathbb R_{\geq 0} \beta
	)
	\cap \Phi
\), where \(\alpha\) and \(\beta\) are not anti-parallel. These morphisms are also preserved under base changes and extra-natural on \(s\).

Informally, we should look at
\(
	G(\R_s^{\Ind})
\) as a ``sheafification'' of
\(
	\gstlin_{G, T, \Phi}(\R_s^{\Ind})
\) in Zariski topology. Namely, consider the abstract group
\(
	\gstlin_{G, T, \Phi}(R_s)
	= \ev_R(\gstlin_{G, T, \Phi}(\R_s^{\Ind}))
\) given by the same generators and relations in \(\Set\) for a unital algebra \(R\). We have the following result.

\begin{lemma} \label{gst-local}
	Suppose that \(K\) is a local ring, \(G\) is a reductive group scheme over \(K\), and
	\(
		(T, \Phi)
	\) is an isotropic pinning on \(G\) of rank at least \(3\). Then the canonical homomorphism
	\(
		\stmap \colon \gstlin_{G, T, \Phi}(K) \to G(K)
	\) is one-to-one.
\end{lemma}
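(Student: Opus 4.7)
The plan is to establish a Gauss-type decomposition
\(
    \gstlin_{G, T, \Phi}(K)
    = \tilde U^+ \cdot \tilde U^- \cdot \tilde U^+ \cdot \tilde L
\)
of the extended Steinberg group that mirrors the Gauss decomposition
\(
    G(K) = U^+(K)\, U^-(K)\, U^+(K)\, L(K)
\)
guaranteed by the locality of \(K\), and then to deduce injectivity of \(\stmap\) by comparing the two decompositions.

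Fix a system of positive roots
\(
    \Pi \subseteq \Phi
\)
and let
\(
    \tilde U^\pm \leq \gstlin_{G, T, \Phi}(K)
\)
be the subgroup generated by the
\(
    x_\alpha(p)
\)
with
\(
    \alpha \in \pm \Pi
\), and let
\(
    \tilde L = d(L(K))
\). The defining relations of \(\gstlin\) involving only generators with positive (resp.\ negative) root indices are exactly the additivity and Chevalley commutator formulas, which are also precisely the relations presenting the iterated semidirect product
\(
    U^\pm(K) = \prod_{\alpha \in \Pi \setminus 2\Pi} U_{\pm\alpha}(K)
\). Hence \(\stmap\) restricts to group isomorphisms
\(
    \tilde U^\pm \xrightarrow{\sim} U^\pm(K)
\). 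Similarly, since
\(
    \stmap \circ d = \id_{L(K)}
\), the homomorphism \(d\) is injective.

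The core step is to show that every element of \(\gstlin_{G, T, \Phi}(K)\) lies in \(\tilde U^+ \tilde U^- \tilde U^+ \tilde L\). I would prove this by induction on the length of a word in the generators, showing that the set \(\tilde U^+ \tilde U^- \tilde U^+ \tilde L\) is closed under right multiplication by each generator. Multiplication by \(d(g)\) is absorbed into \(\tilde L\), and multiplication by \(x_\alpha(p)\) with \(\alpha \in \Pi\) is absorbed into the trailing \(\tilde U^+\) factor via the conjugation relation \(d(\ell)\, x_\alpha(p) = x_\alpha(\up \ell p)\, d(\ell)\). The essential case is right multiplication by \(x_{-\alpha}(p)\) with \(\alpha \in \Pi\): using the Chevalley commutator formula one reduces to the rank-one straightening problem of rewriting products of the form \(x_\alpha(a)\, x_{-\alpha}(b)\) as elements of \(\tilde U^- \tilde U^+ \tilde L\) (which is itself contained in \(\tilde U^+ \tilde U^- \tilde U^+ \tilde L\) by the trivial padding \(x_{\pm \alpha}(0) = 1\)). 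Over the local ring \(K\), the product \(t_\alpha(a)\, t_{-\alpha}(b)\) in the rank-one subgroup \(\langle U_\alpha, U_{-\alpha}, L\rangle(K)\) admits such a Gauss expression, possibly after splitting the argument into summands to handle the case when \(1 + ab\) is a non-unit, and the Weyl element compatibility axiom of \(\gstlin\) (applied after padding the two-factor products with \(x_{\pm\alpha}(0) = 1\) to convert them into three-factor ones) lifts the identity from \(G(K)\) to \(\gstlin\).

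Once the Gauss decomposition in \(\gstlin\) is established, injectivity of \(\stmap\) is immediate. Suppose \(u_1 u_2 u_3 d(\ell) \in \Ker(\stmap)\) with \(u_1, u_3 \in \tilde U^+\) and \(u_2 \in \tilde U^-\). Then
\(
    \stmap(u_2) = \stmap(u_1)^{-1}\, \ell^{-1}\, \stmap(u_3)^{-1}
\)
lies in \(U^-(K) \cap P^+(K) = \{1\}\), where the scheme-theoretic identity \(U^- \cap P^+ = 1\) comes from the open embedding \(U^- \times P^+ \hookrightarrow G\); hence \(u_2 = 1\) by the isomorphism \(\tilde U^- \cong U^-(K)\). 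Then \(\stmap(u_1 u_3) = \ell^{-1}\) lies in \(U^+(K) \cap L(K) = \{1\}\), forcing \(\ell = 1\) and \(u_1 u_3 = 1\). The main obstacle in this plan is the iterative word-straightening in the Gauss decomposition step, where the proliferation of positive and negative root factors produced by commutators must be controlled in the same spirit as the root-elimination arguments of Section~5, and where the hypothesis of isotropic rank \(\geq 3\) provides the room needed to carry out those rearrangements.
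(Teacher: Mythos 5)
Your overall skeleton matches the paper's: establish a Gauss-type decomposition \(\gstlin_{G,T,\Phi}(K) = \tilde U^+\,\tilde U^-\,\tilde U^+\,\tilde L\), with the rank-one input coming from the last defining relation of \(\gstlin\) (padded with \(x_{\pm\alpha}(0)=1\)) together with the Gauss decomposition of \(\langle U_{-\alpha}, L, U_\alpha\rangle(K)\) over the local ring \(K\), and then conclude injectivity from the injectivity of the big-cell product map \(U^+ \times U^- \times L \to G\) and the injectivity of \(\stmap\) on \(\tilde U^{\pm}\). Those pieces, and your endgame, are correct and are essentially what the paper does (the paper conjugates the kernel element by the trailing \(U_\Pi\)-factor instead of intersecting \(U^-\cap P^+\), but this is cosmetic).

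The gap is in the step you yourself flag as the ``main obstacle'': closure of \(X=\tilde U^+\tilde U^-\tilde U^+\tilde L\) under multiplication by a negative generator \(x_{-\alpha}(p)\) does \emph{not} reduce to the rank-one straightening of \(x_\alpha(a)\,x_{-\alpha}(b)\) ``by the Chevalley commutator formula'': for \(\alpha\) a general (non-simple) positive root, commuting \(x_{-\alpha}(p)\) past the trailing \(\tilde U^+\) produces factors at roots \(i\beta - j\alpha\) of both signs, and even for simple \(\alpha\), after the rank-one move you are left with a word of shape \(\tilde U^+\tilde U^-\tilde U_\Sigma\tilde U_{-\alpha}\tilde U_\alpha\tilde U_{-\alpha}\tilde L\) (with \(\Sigma=\Pi\setminus\mathbb R\alpha\)) whose reorganization is exactly the problem you started with; an induction on word length has no visible termination measure, and the root-elimination machinery of \S 5 (which is about \(2\)-small subspaces and reconstructing missing root homomorphisms) is not the relevant tool, nor is the rank \(\geq 3\) hypothesis used at this point. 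The paper closes this gap with a different idea: for a \emph{simple} root \(\alpha\), using that \(U_{\pm\alpha}\) normalize \(U_{\pm\Sigma}\) and the identity \(U_\alpha U_{-\alpha}U_\alpha\, d(L(K)) = U_{-\alpha}U_\alpha U_{-\alpha}\, d(L(K))\) supplied by the rank-one step, one shows \(X = U_{s_\alpha(\Pi)}U_{-s_\alpha(\Pi)}U_{s_\alpha(\Pi)}\,d(L(K))\); hence \(X\) is independent of the choice of positive system \(\Pi\), hence stable under multiplication by \emph{every} root generator (choose \(\Pi\) making that root positive), hence equals all of \(\gstlin_{G,T,\Phi}(K)\). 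Without this invariance-under-\(s_\alpha\) argument (or some substitute), your proposal does not yield the decomposition, so the proof is incomplete at its central step.
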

\begin{proof}
	Fix a maximal isotropic pinning
	\(
		(T, \Phi) \subseteq (T', \Phi')
	\) with corresponding subgroups
	\(
		U'_\alpha, L' \leq G
	\), so
	\[
		U_\alpha(K)
		= \prod_{\Image(\beta) = \alpha}
			U'_\beta(K),
	\quad
		\langle
			L'(K), U'_\beta(K)
			\mid
			\Image(\beta) = 0
		\rangle
		\leq L(K).
	\]
	By the Gauss decomposition of \(G(K)\) with respect to \((T', \Phi')\) we have
	\[
		G(K)
		= U'_{\Pi'}(K)\,
			U'_{-\Pi'}(K)\,
			U'_{\Pi'}(K)\,
			L'(K)
		= U_\Pi(K)\, U_{-\Pi}(K)\, U_\Pi(K)\, L(K),
	\]
	where
	\(
		\Pi \subseteq \Phi
	\) and
	\(
		\Pi' \subseteq \Phi'
	\) are subsets of positive roots such that
	\(
		\Image(\Pi') \subseteq \Pi \cup \{0\}
	\), so \(\stmap\) is surjective.

	Let us prove that also
	\[
		\gstlin_{G, T, \Phi}(K)
		= U_\Pi(K)\, U_{-\Pi}(K)\, U_\Pi(K)\, d(L(K)).
	\]
	The right hand side \(X\) of this equality is closed under multiplications by
	\(
		x_\alpha(a)
	\) and
	\(
		d(g)
	\) for positive \(\alpha\). Let us check that
	\[
		X
		= U_{s_\alpha(\Pi)}(K)\,
			U_{-s_\alpha(\Pi)}(K)\,
			U_{s_\alpha(\Pi)}(K)\,
			d(L(K))
	\]
	for any basic root \(\alpha\). Indeed, by the commutator formulae we know that
	\[
		X
		= U_{\Pi \setminus \mathbb R \alpha}(K)\,
			U_{\Pi \setminus \mathbb R \alpha}(K)\,
			U_{\Pi \setminus \mathbb R \alpha}(K)\,
			U_\alpha(K)\,
			U_{-\alpha}(K)\,
			U_\alpha(K)\,
			d(L(K)).
	\]
	Recall that the fppf sheaf
	\(
		\langle U_{-\alpha}, L, U_\alpha \rangle
		\leq G
	\) is a reductive subgroup with an isotropic pinning
	\(
		(T, \Phi \cap \mathbb Z \alpha)
	\) by \cite[XXII, proposition 5.10.1]{sga3}. The last relation in
	\(
		\gstlin_{G, T, \Phi}(K)
	\) and the Gauss decomposition applied to
	\(
		\langle U_{-\alpha}, L, U_\alpha \rangle
	\) imply that
	\[
		U_\alpha(K)\,
			U_{-\alpha}(K)\,
			U_\alpha(K)\,
			d(L(K))
		= U_{-\alpha}(K)\,
			U_\alpha(K)\,
			U_{-\alpha}(K)\,
			d(L(K)).
	\]
	It follows that \(X\) is independent on the choice of \(\Pi\), so it is closed under multiplications by all root subgroups from the left and
	\(
		X = \gstlin_{G, T, \Phi}(K)
	\).

	Now take any
	\(
		g \in \Ker(\stmap)
	\) and write it as
	\(
		g = g_1 g_2 d g_3
	\) for
	\(
		g_1, g_3 \in U_\Pi(K)
	\),
	\(
		g_2 \in U_{-\Pi}(K)
	\), and
	\(
		d \in d(L(K))
	\). Since
	\(
		\up {g_3} g = g_3 g_1 g_2 d
	\) also lies in the kernel and the multiplication map
	\(
		U_\Pi(K) \times U_{-\Pi}(K) \times L(K)
		\to G(K)
	\) is injective, we have
	\(
		g_3 = g_1^{-1}
	\),
	\(
		g_2 = 1
	\), and
	\(
		d = 1
	\), i.e.
	\(
		g = 1
	\).
\end{proof}

There is the following commutative diagram of group objects and homomorphisms denoted by solid arrows.
\[\xymatrix@R=45pt@C=120pt@M=6pt@!0{
	\stlin_{G, T, \Phi}(\R^{(s^\infty)})
	\ar^\delta[r]
	\ar^{\stmap}[d]
&
	\gstlin_{G, T, \Phi}(\R_s^{\Ind})
	\ar^{\stmap}[d]
\\
	G(\R^{(s^\infty)})
	\ar_\delta[r]
&
	G(\R_s^{\Ind})
	\ar@/_12pt/@{.>}[ul]
	\ar@/_12pt/@{.>}[l]
}\]

The dotted arrows denote natural actions by automorphisms, the diagonal one is constructed below in proposition \ref{local-action}. They have the following properties.
\begin{itemize}

	\item
	The homomorphisms
	\begin{align*}
		\delta
		&\colon \stlin_{G, T, \Phi}(\R^{(s^\infty)})
		\to \gstlin_{G, T, \Phi}(\R_s^{\Ind}),
	\\
		\delta \circ \stmap
		&\colon \stlin_{G, T, \Phi}(\R^{(s^\infty)})
		\to G(\R_s^{\Ind}),
	\\
		\delta
		&\colon G(\R^{(s^\infty)}) \to G(\R_s^{\Ind})
	\end{align*}
	are crossed modules, where the action of
	\(
		\gstlin_{G, T, \Phi}(\R_s^{\Ind})
	\) on
	\(
		\stlin_{G, T, \Phi}(\R^{(s^\infty)})
	\) is induced from the action of
	\(
		G(\R_s^{\Ind})
	\).

	\item
	The homomorphism
	\(
		\stmap
		\colon \stlin_{G, T, \Phi}(\R^{(s^\infty)})
		\to G(\R^{(s^\infty)})
	\) is
	\(
		G(\R_s^{\Ind})
	\)-equivariant.

	\item
	Both actions are preserved under base changes. They are also extra-natural on \(s\), i.e. satisfy the identities
	\[
		\up g {\bigl(t^*(x)\bigr)}
		= t^*\bigl(\up {t_*(g)} x\bigr).
	\]

\end{itemize}
The claim for
\(
	\delta \colon G(\R^{(s^\infty)}) \to G(\R_s^{\Ind})
\) is already known. The action of
\(
	G(\R_s^{\Ind})
\) on
\(
	\stlin_{G, T, \Phi}(\R^{(s^\infty)})
\) is constructed in proposition \ref{local-action} below, where we also prove the claimed identities.

\begin{lemma} \label{gst-action}
	The homomorphism
	\(
		\delta
		\colon \stlin_{G, T, \Phi}(\R^{(s^\infty)})
		\to \gstlin_{G, T, \Phi}(\R_s^{\Ind})
	\) is a crossed module in a unique way. The action is given by
	\(
		\up{d(g)}{x_\alpha(p)} = x_\alpha(\up g p)
	\) for
	\(
		g \colon L(\R_s^{\Ind})
	\),
	\(
		p \colon P_\alpha(\R^{(s^\infty)})
	\) and
	\[
		\up{x_\alpha(p)}{x_\beta(q)}\,
			x_\beta(\dotminus q)
		= \prod_\gamma
			x_\gamma(f_{\alpha, \beta}^\gamma(p, q))
	\]
	for
	\(
		p \colon P_\alpha(\R_s^{\Ind})
	\),
	\(
		q \colon P_\beta(\R^{(s^\infty)})
	\) if \(\alpha\) and \(\beta\) are not anti-parallel. The action is preserved under base changes, it is extra-natural on \(s\). Moreover,
	\(
		\stmap(\up g x) = \up{\stmap(g)}{\stmap(x)}
	\) for
	\(
		x \colon \stlin_{G, T, \Phi}(\R^{(s^\infty)})
	\) and
	\(
		g \colon \gstlin_{G, T, \Phi}(\R_s^{\Ind})
	\), i.e. the vertical arrows from the diagram above are a homomorphism of crossed modules.
\end{lemma}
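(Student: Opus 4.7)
The plan is to define the action of $\gstlin_{G,T,\Phi}(\R_s^{\Ind})$ on $\stlin_{G,T,\Phi}(\R^{(s^\infty)})$ as the composition of $\stmap \colon \gstlin_{G,T,\Phi}(\R_s^{\Ind}) \to G(\R_s^{\Ind})$ with the $G(\R_s^{\Ind})$-action on $\stlin_{G,T,\Phi}(\R^{(s^\infty)})$ supplied by proposition \ref{local-action}. Uniqueness is immediate: the Steinberg group is perfect by lemma \ref{st-perf}, so lemma \ref{perf-x-mod}(3) applies. Base change compatibility, extra-naturality on $s$, and the ``Moreover'' equivariance of $\stmap$ all descend from the analogous $G$-level properties (for instance, $\stmap(\up g x) = \stmap(\up{\stmap(g)} x) = \up{\stmap(g)} \stmap(x)$).

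To verify the explicit formulas, one substitutes into the generators. For $g \colon L(\R_s^{\Ind})$ we have $\stmap(d(g)) = g$, and the $L$-equivariance built into proposition \ref{local-action} gives $\up{d(g)}{x_\alpha(p)} = \up g {x_\alpha(p)} = x_\alpha(\up g p)$. For $p \colon P_\alpha(\R_s^{\Ind})$, $q \colon P_\beta(\R^{(s^\infty)})$ with $\alpha, \beta$ not anti-parallel, $\stmap(x_\alpha(p)) = t_\alpha(p)$, and proposition \ref{local-action}'s commutator-style formula yields $\up{x_\alpha(p)}{x_\beta(q)} = \prod_\gamma x_\gamma(f_{\alpha,\beta}^\gamma(p,q))\, x_\beta(q)$, which is the stated identity after moving $x_\beta(\dotminus q)$ to the right.

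For the crossed module axioms: the identity $\up x y = \up{\delta(x)} y$ for $x, y \colon \stlin_{G,T,\Phi}(\R^{(s^\infty)})$ collapses, via the commuting square $\stmap \circ \delta = \delta \circ \stmap$, to $\up{\delta(\stmap(x))} y = x y x^{-1}$, which is exactly the already-observed crossed module statement for $\delta \circ \stmap \colon \stlin_{G,T,\Phi}(\R^{(s^\infty)}) \to G(\R_s^{\Ind})$. The other axiom $\delta(\up g x) = \up g \delta(x)$ is verified on generators. The set of $g$ for which it holds is a subgroup of $\gstlin_{G,T,\Phi}(\R_s^{\Ind})$, so it suffices to check $g = d(h)$ and $g = x_\beta(r)$. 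The case $g = d(h)$ reduces on $x = x_\alpha(p)$ to the identity $x_\alpha(\up h p) = x_\alpha(\up h p)$ in $\gstlin_{G,T,\Phi}(\R_s^{\Ind})$. For $g = x_\beta(r)$, fix $\beta$ and note that the set of $x$ where the identity holds is a subgroup of $\stlin_{G,T,\Phi}(\R^{(s^\infty)})$; by proposition \ref{elim-bij} applied to $\mathbb R \beta$, which is $2$-small because the local isotropic rank is at least $3$ forces every component of $\Phi$ to have rank at least $3$, it suffices to test this subgroup on generators $x_\gamma(q)$ with $\gamma \in \Phi \setminus \mathbb R \beta$. For such $\gamma$ the roots $\beta$ and $\gamma$ are not anti-parallel, and both sides expand to $\prod_\mu x_\mu(f_{\beta,\gamma}^\mu(r,q))\, x_\gamma(q)$ by the commutator formula in $\gstlin_{G,T,\Phi}(\R_s^{\Ind})$ and by proposition \ref{local-action} respectively.

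The main obstacle is the anti-parallel case of $\delta$-equivariance: explicit commutator formulae are unavailable there, so a naive generator-by-generator argument fails. Root elimination via proposition \ref{elim-bij} is the key tool that circumvents this, and it works precisely because the local isotropic rank hypothesis forces $\mathbb R \beta$ to be $2$-small in every irreducible component of $\Phi$, reducing the remaining check to the non-anti-parallel commutator formula already supplied by proposition \ref{local-action}.
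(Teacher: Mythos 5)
Your proposal has a genuine gap: it is circular within the paper's logical development. You define the action of \(\gstlin_{G, T, \Phi}(\R_s^{\Ind})\) by pulling back along \(\stmap\) the \(G(\R_s^{\Ind})\)-action ``supplied by proposition \ref{local-action}'', but proposition \ref{local-action} is proved only after this lemma and depends on it essentially: lemma \ref{k2-central} and lemma \ref{point-action} (lifting \(g \in G(K_s)\) Zariski-locally to elements \(\widetilde g_i \in \gstlin_{G, T, \Phi}(K_{t_i s})\) via the Gauss decomposition and letting them act through the \(\gstlin\)-action, then gluing by the cosheaf property of proposition \ref{st-cosheaf}) all presuppose lemma \ref{gst-action}, and even the statement of proposition \ref{local-action} refers to the action constructed here. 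There is no independent route to the \(G(\R_s^{\Ind})\)-action: the whole point of this lemma is to produce the action of the extended Steinberg group first, so that it can later be descended to \(\Image(\stmap)\) and glued. Consequently the key construction step in your argument — how a generator \(x_\beta(r)\) with \(r \colon P_\beta(\R_s^{\Ind})\), or \(d(g)\), acts on \(\stlin_{G, T, \Phi}(\R^{(s^\infty)})\) — is exactly what must be built from scratch, and you have outsourced it to a result that is not yet available.

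The paper instead constructs the action directly: the stated formulae define actions of the generating objects \(U_\alpha(\R_s^{\Ind})\) and \(L(\R_s^{\Ind})\) by endomorphisms via lemma \ref{gen-rel}(3) and proposition \ref{elim-bij} applied to \(V = \mathbb R \alpha\) (the needed identities among the \(f_{\beta \gamma}^\delta\) and the \(L\)-action hold because they hold in the group scheme \(G_{K_s}\)); these are automorphisms, and the whole action of \(\gstlin_{G, T, \Phi}(\R_s^{\Ind})\) is then assembled by lemma \ref{gen-rel}(4) together with lemma \ref{elim-sur} for \(V = 0\), \(\mathbb R \alpha\), and \(\mathbb R \alpha + \mathbb R \beta\), which also yields the crossed-module identities and extra-naturality. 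Your second half — verifying \(\delta(\up g x) = \up g {\delta(x)}\) on generators and invoking root elimination with \(V = \mathbb R \beta\) to dodge the anti-parallel case — is in the right spirit and close to the paper's use of the elimination machinery, as is your appeal to lemmas \ref{st-perf} and \ref{perf-x-mod}(3) for uniqueness; but to repair the proof you must replace the appeal to proposition \ref{local-action} by a direct generators-and-relations construction of the action along these lines.
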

\begin{proof}
	Lemmas \ref{perf-x-mod} and \ref{st-perf} imply that the action is unique, is preserved under base changes, and is preserved under \(\stmap\) if the action exists at all.

	The formulae from the statement induce unique actions of the objects
	\(
		U_\alpha(\R_s^{\Ind})
	\) and
	\(
		L(\R_s^{\Ind})
	\) on
	\(
		\stlin_{G, T, \Phi}(\R^{(s^\infty)})
	\) by endomorphisms by lemma \ref{gen-rel}(3) and proposition \ref{elim-bij} applied to
	\(
		V = \mathbb R \alpha
	\). Indeed, the necessary identities involving
	\(
		f_{\beta \gamma}^\delta
	\) and the morphisms
	\(
		\up{(-)}{(=)}
		\colon L \times P_\beta
		\to P_\beta
	\) trivially hold inside the group scheme
	\(
		G_{K_s}
	\). By the same argument,
	\[
		\up {x_\alpha(p \dotplus q)} g
		= \up{x_\alpha(p)}{(\up {x_\alpha(q)} g)},
	\quad
		\up {d(h_1 h_2)} g
		= \up {d_1(h_1)} {(\up {d(h_2)} g)},
	\]
	so in particular the objects act by automorphisms.

	Now the required action of
	\(
		\stlin_{G, T, \Phi}(\R_s^{\Ind})
	\) on
	\(
		\stlin_{G, T, \Phi}(\R^{(s^\infty)})
	\) may be constructed by lemmas \ref{gen-rel}(4) and \ref{elim-sur} applied to
	\(
		V = 0
	\),
	\(
		V = \mathbb R \alpha
	\), or
	\(
		V = \mathbb R \alpha + \mathbb R \beta
	\) for linearly independent
	\(
		\alpha, \beta \in \Phi
	\). It is easy to see using lemmas \ref{gen-rel} and \ref{elim-sur} that this action makes \(\delta\) a crossed module.

	Finally, the extra-naturality easily follows from the explicit formulae describing the actions and lemma \ref{elim-sur} since it suffice to check the claim for \(g\) from generating objects.
\end{proof}

\subsection{Cosheaf properties}

\begin{lemma} \label{ring-cosheaf}
	Let
	\(
		s \in K
	\) be any element and
	\(
		(M, M_0)
	\) be a \(2\)-step nilpotent \(K_s\)-module such that \(M_0\) and
	\(
		M / M_0
	\) are free \(K_s\)-modules of finite rank. Let also
	\(
		t_1, \ldots, t_n \in K
	\) be such that
	\(
		\sum_{i = 1}^n K_s t_i = K_s
	\). Then the morphisms
	\[
		t_i^*
		\colon \mathbb W(M)(\R^{((t_i s)^\infty)})
		\to \mathbb W(M)(\R^{(s^\infty)})
	\]
	generate
	\(
		\mathbb W(M)(\R^{(s^\infty)})
	\) and the only relations between them are
	\begin{itemize}

		\item
		\(
			t_i^*(p \dotplus q)
			= t_i^*(p) \dotplus t^*(q)
		\),

		\item
		\(
			t_i^* \circ t_j^*
			= t_j^* \circ t_i^*
			\colon \mathbb W(M)(
				\R^{((t_i t_j s)^\infty)}
			)
			\to \mathbb W(M)(\R^{(s^\infty)})
		\) for
		\(
			i < j
		\) (so
		\(
			(t_i t_j)^*
		\) is well-defined as any of these compositions),

		\item
		\(
			[t_i^*(p), t_j^*(q)]
			= (t_i t_j)^*([p, q]^\cdot)
		\) for
		\(
			i < j
		\), where the commutator morphism in the right hand side is induced by the commutator morphism of \(M\) as a bilinear map
		\(
			(M / M_0) \times (M / M_0) \to M_0
		\) and by the multiplication morphism
		\[
			\R^{((t_i s)^\infty)}
				\times \R^{((t_j s)^\infty)}
			\to \R^{((t_i t_j s)^\infty)}
		\]
		given by the maps
		\[
			\bigl(
				p^{((t_i s)^m)},
				q^{((t_j s)^m)}
			\bigr)
			\mapsto (s^m p q)^{((t_i t_j s)^m)}.
		\]

	\end{itemize}
\end{lemma}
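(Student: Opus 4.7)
Let $H$ be the group object in $\mathbf{U}_K$ defined by the presentation in the statement. All three listed families of relations hold in $\mathbb{W}(M)(\R^{(s^\infty)})$ by direct computation with the formulae for $t_i^*$: additivity is manifest; the two compositions $t_i^* \circ t_j^*$ and $t_j^* \circ t_i^*$ both send $p^{((t_i t_j s)^m)}$ to $(t_i^m t_j^m p)^{(s^m)}$; and the commutator identity captures the descent of the bilinear form $M/M_0 \times M/M_0 \to M_0$ through the prescribed multiplication $\R^{((t_i s)^m)} \times \R^{((t_j s)^m)} \to \R^{((t_i t_j s)^m)}$. This yields a comparison homomorphism $\phi \colon H \to \mathbb{W}(M)(\R^{(s^\infty)})$, and the task is to prove $\phi$ is an isomorphism.

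The crux of the argument is a partition of unity for $\R^{(s^\infty)}$. Since $\sum_i K_s t_i = K_s$, raising to the $N$-th power and using the pigeonhole principle shows $\sum_i K_s t_i^N = K_s$ for every $N \geq 1$, so there exist $r_N \geq 0$ and $d_{i,N} \in K$ with $\sum_i d_{i,N} t_i^N = s^{r_N}$ in $K$. Imitating the proof of lemma \ref{power-idem}, where the choice $c = 1^{(s^n)}$ splits the kernel pair of $\mu$, the morphism $\bigoplus_i \R^{((t_i s)^\infty)} \to \R^{(s^\infty)}$ sending $(a_i^{((t_i s)^n)})_i$ to $(\sum_i t_i^n a_i)^{(s^n)}$ is realized as an inverse limit of regular epimorphisms of presheaves, hence a regular epimorphism in $\Pro(\mathbf{P}_K)$. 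Combined with the splitting $\mathbb{W}(M)(\R^{(s^\infty)}) \cong (\R^{(s^\infty)})^k \dotoplus (\R^{(s^\infty)})^l$ (which is compatible with every $t_i^*$), this shows $\phi$ is a regular epimorphism.

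For injectivity I would reapply lemma \ref{power-idem} to present $\mathbb{W}(M)(\R^{(s^\infty)})$ by its identity morphism modulo additivity alone. Using the partition of unity, an element of $\mathbb{W}(M)(\R^{(s^n)})$ decomposes additively as a combination of elements in images of the $t_i^*$, which can be interpreted inside $H$. The three listed relations are precisely what is required to verify that this decomposition is well defined, that is, independent of both the chosen partition of unity and the level at which it is taken, and that it respects the relation $\id(p \dotplus q) = \id(p) \dotplus \id(q)$ from lemma \ref{power-idem}; the desired inverse $\psi \colon \mathbb{W}(M)(\R^{(s^\infty)}) \to H$ of $\phi$ then exists by lemma \ref{gen-rel}(3).

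The main obstacle is this final compatibility check. One must show that all ambiguity in the partition of unity is absorbed by additivity together with the commutation $t_i^* \circ t_j^* = t_j^* \circ t_i^*$, and that every crossed commutator $[t_i^*(p), t_j^*(q)]$ arising from the 2-step nilpotent structure reduces exactly to $(t_i t_j)^*([p,q]^\cdot)$ with no further higher-order obstructions. The latter is ensured by the 2-step nilpotence hypothesis on $(M, M_0)$, which forces all triple and higher commutators to vanish and thereby confines the non-abelian content of $\mathbb{W}(M)$ to the single prescribed bilinear pairing.
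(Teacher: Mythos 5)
Your overall shape (a comparison morphism $\phi$ plus a partition of unity $\sum_i a_{im} t_i^m = s^N$) points in the right direction, but the proposal has a false step and leaves the actual content of the lemma unproved. First, the surjectivity argument: the level maps $(a_i^{((t_i s)^n)})_i \mapsto (\sum_i t_i^n a_i)^{(s^n)}$ are \emph{not} epimorphisms of presheaves, so the morphism is not ``an inverse limit of regular epimorphisms of presheaves''. At a fixed level the image is the sub-presheaf $(\sum_i t_i^n \R)^{(s^n)} \subseteq \R^{(s^n)}$, which is proper in general because the hypothesis $\sum_i K_s t_i = K_s$ only gives $s^N \in \sum_i K t_i^n$, not $1 \in \sum_i K t_i^n$ (contrast with lemma \ref{power-idem}, where one can take $a = 1$ at every level). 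The morphism is still a regular epimorphism in $\Pro(\mathbf P_K)$, but only because the structure map $\R^{(s^{n+N})} \to \R^{(s^n)}$ is multiplication by $s^N$ and hence factors through the level image; this level shift is exactly what your argument never addresses. Second, and more seriously, the statement to be proved is that the listed relations are the \emph{only} relations, and this is precisely the ``final compatibility check'' that you flag as ``the main obstacle'' and do not carry out. Your plan for it also cannot work verbatim: an element $p^{(s^n)}$ does not decompose at level $n$ into images of the $t_i^*$ (only $(s^N p)^{(s^n)}$ does), so the would-be inverse cannot be obtained by a level-by-level additive decomposition fed into lemma \ref{gen-rel}(3); it has to be a pro-morphism that shifts levels.

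For comparison, the paper first reduces to the abelian case $M = K_s$ via the splitting $M \cong K_s^k \dotoplus K_s^l$ (this disposes of the higher-commutator worries and of any normal-form issues in the non-abelian presentation), and then argues entirely at the level of abelian presheaves: it introduces $L_m$, generated by $\tau_i \colon \R^{((t_i s)^m)} \to L_m$ subject to additivity and $\tau_i\bigl((t_j^m p)^{((t_i s)^m)}\bigr) = \tau_j\bigl((t_i^m p)^{((t_j s)^m)}\bigr)$, and shows that $c \colon \varprojlim_m L_m \to \R^{(s^\infty)}$ is invertible by exhibiting $d_m \colon \R^{(s^{m+N})} \to L_m$, $p^{(s^{m+N})} \mapsto \sum_i \tau_i\bigl((a_{im} p)^{((t_i s)^m)}\bigr)$, and checking that $c_m \circ d_m$ and $d_m \circ c_{m+N}$ are the structure maps of the two inverse systems, so that $(d_m)_m$ is an inverse of $c$ in $\Pro(\mathbf P_K)$. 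This single explicit computation yields both the generation statement and the ``no further relations'' statement simultaneously; to close the gap in your proposal you would need to supply an argument of exactly this kind (reduction to $M = K_s$, plus the shifted inverse $d_m$), rather than appeal to a level-wise decomposition that does not exist.
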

\begin{proof}
	Since
	\(
		M \cong K_s^k \dotoplus K_s^l
	\), the generating morphisms are additive and satisfy commutator relations, the claim easily reduces to the case
	\(
		M = K_s
	\). Recall that every
	\(
		t_i^*
	\) is given by the inverse system of maps
	\[
		\R^{(t_i s)^m} \to \R^{(s^m)},\,
		a^{(t_i s)^m} \mapsto (t_i^m a)^{(s^m)}.
	\]
	Consider the abelian group presheaves
	\(
		L_m \in \mathbf P_K
	\) generated by the abstract morphisms
	\(
		\tau_i \colon \R^{(t_i s)^m} \to L_m
	\) with the only relations
	\[
		\tau_i\bigl((p + q)^{((t_i s)^m)}\bigr)
		= \tau_i\bigl(p^{((t_i s)^m)}\bigr)
		+ \tau_i\bigl(q^{((t_i s)^m)}\bigr),
	\quad
			\tau_i\bigl(
				(t_j^m p)^{(t_i s)^m}
			\bigr)
			= \tau_j\bigl(
				(t_i^m p)^{(t_j s)^m}
			\bigr).
	\]
	We are going to prove that the homomorphism
	\(
		c
		\colon \varprojlim_m L_m
		\to \R^{(s^\infty)}
	\) in
	\(
		\Pro(\mathbf P_K)
	\) (given by the inverse limit of the canonical maps
	\(
		c_m \colon L_m \to \R^{(s^m)},\,
		\tau_i\bigl(p^{((t_i s)^m)}\bigr)
		\mapsto (t_i^m p)^{(s^m)}
	\)) is invertible.

	Since \(t_i\) generate the unit ideal
	\(
		K_s
	\), for any
	\(
		m \geq 0
	\) there are
	\(
		N \geq 0
	\) and
	\(
		a_{1m}, \ldots, a_{nm} \in K
	\) such that
	\(
		\sum_{i = 1}^n a_{im} t_i^m = s^N
	\). Consider homomorphisms
	\[
		d_m
		\colon \R^{(s^{m + N})}
		\to L_m,\,
		p^{(s^{m + N})}
		\mapsto \sum_{i = 1}^n
			\tau_i\bigl((a_{im} p)^{((t_i s)^m)}\bigr).
	\]
	Clearly,
	\(
		(c_m \circ d_m)(p^{(s^{m + N})})
		= (s^N p)^{s^m}
	\). On the other hand,
	\[
		d_m\bigl(
			c_{m + N}\bigl(
				\tau_i\bigl(p^{((t_i s)^{m + N})}\bigr)
			\bigr)
		\bigr)
	=
		\sum_{j = 1}^n
			\tau_j\bigl(
				(a_{jm} t_i^{m + N} p)^{((t_j s)^m)}
			\bigr)
	=
		\sum_{j = 1}^n
			\tau_i\bigl(
				(a_{jm} t_j^m t_i^N p)^{((t_i s)^m)}
			\bigr)
	=
		\tau_i\bigl((t_i^N s^N p)^{((t_i s)^m)}\bigr).
	\]
	In other words,
	\(
		c^{-1}
	\) is the inverse limit of \(d_m\).
\end{proof}

The next result is an analogue of \cite[theorems 3, 4]{cosheaves}, i.e. Steinberg groups satisfy a cosheaf property.
\begin{prop} \label{st-cosheaf}
	Let
	\(
		t_1, \ldots, t_n \in K
	\) be such that
	\(
		\sum_{i = 1}^n K_s t_i = K_s
	\). Then the Steinberg group
	\(
		\stlin_{G, T, \Phi}(\R^{(s^\infty)})
	\) is the colimit of the diagram
	\[
		\stlin_{G, T, \Phi}(\R^{((t_i t_j s)^\infty)})
		\rightrightarrows \stlin_{G, T, \Phi}(
			\R^{((t_i s)^\infty)}
		)
	\]
	of crossed modules over
	\(
		\gstlin_{G, T, \Phi}(\R_s^{\Ind})
	\), i.e. it is the universal crossed module with homomorphisms
	\[
		t_i^*
		\colon \stlin_{G, T, \Phi}(
			\R^{((t_i s)^\infty)}
		)
		\to \stlin_{G, T, \Phi}(\R^{(s^\infty)})
	\]
	satisfying
	\(
		t_i^* \circ t_j^* = t_j^* \circ t_i^*
	\). Moreover,
	\(
		\stlin_{G, T, \Phi}(\R^{(s^\infty)})
	\) as a group object is generated by the morphisms
	\(
		t_i^*
	\) satisfying the only relations
	\begin{itemize}

		\item
		\(
			t_i^*(g h) = t_i^*(g)\, t_i^*(h)
		\);

		\item
		\(
			t_i^* \circ t_j^*
			= t_j^* \circ t_i^*
			\colon \stlin_{G, T, \Phi}\bigl(
				\R^{((t_i t_j s)^\infty)}
			\bigr)
			\to \stlin_{G, T, \Phi}(\R^{(s^\infty)})
		\) for
		\(
			i < j
		\);

		\item
		\(
			\up{t_i^*(g)}{t_j^*(h)}
			= t_j^*(\up {\widetilde g} h)
		\) for
		\(
			i \neq j
		\), where
		\(
			\widetilde g
			= \bigl(
				(t_j)_*
				\circ \delta
				\circ t_i^*
			\bigr)(g)
			\colon \gstlin_{G, T, \Phi}(
				\R_{t_j s}^{\Ind}
			)
		\).

	\end{itemize}
\end{prop}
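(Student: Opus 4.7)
The plan is to prove the ``Moreover'' clause first, which gives an explicit presentation of \(\stlin_{G, T, \Phi}(\R^{(s^\infty)})\) by the morphisms \(t_i^*\); the colimit statement then follows formally from this presentation combined with lemma \ref{gst-action}. Let \(H\) denote the group object defined by the three families of relations in the statement. The three relations already hold in \(\stlin_{G, T, \Phi}(\R^{(s^\infty)})\): multiplicativity of \(t_i^*\) is definitional, \(t_i^* \circ t_j^* = t_j^* \circ t_i^*\) holds since both compositions are induced by the same structure map on co-localizations, and the conjugation identity is precisely the extra-naturality of the action established in lemma \ref{gst-action}. This yields a homomorphism \(\varphi \colon H \to \stlin_{G, T, \Phi}(\R^{(s^\infty)})\).

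To construct the inverse, for every \(\alpha \in \Phi\) I would define a morphism \(\widetilde x_\alpha \colon P_\alpha(\R^{(s^\infty)}) \to H\) using lemma \ref{ring-cosheaf} applied to the \(2\)-step nilpotent \(K_s\)-module \((P_\alpha, P_{2\alpha})\). On the generating subobject \(t_i^*(P_\alpha(\R^{((t_i s)^\infty)}))\) the morphism is set to be \(t_i^* \circ x_\alpha\) in \(H\). The \(\dotplus\)-additivity relation in lemma \ref{ring-cosheaf} is matched by the additivity relation in each \(\stlin_{G, T, \Phi}(\R^{((t_i s)^\infty)})\) together with the multiplicativity of \(t_i^*\) in \(H\); the commutator relation \([t_i^*(p), t_j^*(q)] = (t_i t_j)^*([p, q]^\cdot)\) inside \(P_\alpha(\R^{(s^\infty)})\) must be checked to be consistent with the Steinberg commutator formula in \(H\) applied to \(x_\alpha\), which reduces to an identity already valid in \(\stlin_{G, T, \Phi}(\R^{((t_i t_j s)^\infty)})\) after one application of the third defining relation of \(H\).

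Next I would verify that the family \(\{\widetilde x_\alpha\}\) satisfies the defining relations of \(\stlin_{G, T, \Phi}(\R^{(s^\infty)})\), yielding a homomorphism \(\psi \colon \stlin_{G, T, \Phi}(\R^{(s^\infty)}) \to H\). The additivity and the \(\mathsf{BC}_\ell\)-identification \(\widetilde x_\alpha = \widetilde x_{2\alpha}\) descend from the same relations in each \(\stlin_{G, T, \Phi}(\R^{((t_i s)^\infty)})\). The main obstacle is the commutator formula \([\widetilde x_\alpha(p), \widetilde x_\beta(q)] = \prod_\gamma \widetilde x_\gamma(f^\gamma_{\alpha, \beta}(p, q))\) for non-anti-parallel \(\alpha, \beta\). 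By lemma \ref{ring-cosheaf} applied to \(P_\alpha \times P_\beta\) and by lemma \ref{gen-rel}(3) one may assume \(p = t_i^*(p')\) and \(q = t_j^*(q')\). When \(i = j\) the identity is inherited from the Steinberg relation inside \(\stlin_{G, T, \Phi}(\R^{((t_i s)^\infty)})\) via the homomorphism \(t_i^*\). When \(i \neq j\) I rewrite the commutator as \(\up{t_i^*(x_\alpha(p'))}{t_j^*(x_\beta(q'))}\, t_j^*(x_\beta(\dotminus q'))\), apply the third relation of \(H\) to move the conjugation inside \(t_j^*\), and expand using the explicit conjugation formula in \(\gstlin_{G, T, \Phi}(\R_{t_j s}^{\Ind})\) from lemma \ref{gst-action}; the resulting terms are \(t_j^*\) of root elements indexed by \(i\alpha + j\beta\), and a careful bookkeeping of the multiplication map \(\R^{((t_i s)^\infty)} \times \R^{((t_j s)^\infty)} \to \R^{((t_i t_j s)^\infty)}\) matches them with \(\widetilde x_\gamma(f^\gamma_{\alpha, \beta}(p, q))\) via the second relation of \(H\).

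Finally, \(\varphi\) and \(\psi\) are clearly mutually inverse on generators, so the presentation is established. For the colimit claim, suppose that \(X\) is a crossed module over \(\gstlin_{G, T, \Phi}(\R_s^{\Ind})\) equipped with homomorphisms \(\varphi_i \colon \stlin_{G, T, \Phi}(\R^{((t_i s)^\infty)}) \to X\) commuting on the overlaps \(\stlin_{G, T, \Phi}(\R^{((t_i t_j s)^\infty)})\); the crossed module axiom \(\up x y = \up{\delta(x)} y\) and the \(\gstlin\)-equivariance of \(\varphi_i\) translate the third relation of \(H\) exactly into a consequence of these hypotheses, so the \(\varphi_i\) extend uniquely to a homomorphism \(\stlin_{G, T, \Phi}(\R^{(s^\infty)}) \to X\) by the universal property of \(H\). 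The \(\gstlin\)-equivariance of the extension then follows from lemma \ref{perf-x-mod}(3) applied to the perfect group object \(\stlin_{G, T, \Phi}(\R^{(s^\infty)})\) (lemma \ref{st-perf}).
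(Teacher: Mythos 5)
Your proposal is correct and follows essentially the same route as the paper: first establish the presentation of \(\stlin_{G, T, \Phi}(\R^{(s^\infty)})\) by the \(t_i^*\), gluing the root morphisms via lemma \ref{ring-cosheaf} and checking the Steinberg relations for the glued generators (the mixed-index commutators being handled through the conjugation relation coming from lemma \ref{gst-action}), and then deduce the colimit property formally from this presentation. The only differences are cosmetic: the paper passes through an intermediate presentation with generators \(x_\alpha \circ t_i^*\) and obtains equivariance of the induced map from lemma \ref{gen-rel}(3), where you invoke lemma \ref{perf-x-mod}(3) together with lemma \ref{st-perf}; and your reduction of the commutator check to single generators \(p = t_i^*(p')\), \(q = t_j^*(q')\) is exactly the same (equally terse) rewriting step that the paper dispatches with ``may be easily rewritten as a product of root elements''.
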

\begin{proof}
	Let us prove the second claim. Indeed, consider the morphisms
	\[
		x_\alpha^i
		= x_\alpha \circ t_i^*
		\colon P_\alpha(\R^{((t_i s)^\infty)})
		\to \stlin_{G, T, \Phi}(\R^{(s^\infty)})
	\]
	and the relations
	\begin{itemize}

		\item
		\(
			x_\alpha^i(p \dotplus q)
			= x_\alpha^i(p)\, x_\alpha^i(q)
		\);

		\item
		\(
			x_{2 \alpha}^i(p) = x_\alpha^i(p)
		\) for
		\(
			p
			\colon P_{2 \alpha}(\R^{((t_i s)^\infty)})
		\) if \(\alpha\) is ultrashort in a component of type
		\(
			\mathsf{BC}_\ell
		\);

		\item
		\(
			x_\alpha^i \circ t_j^*
			= x_\alpha^j \circ t_i^*
			\colon P_\alpha(\R^{((t_i t_j s)^\infty)})
			\to \stlin_{G, T, \Phi}(\R^{(s^\infty)})
		\) for
		\(
			i < j
		\);

		\item
		\(
			[x_\alpha^i(p), x_\beta^j(q)]
			= \prod_\gamma
				x_\gamma^j(
					f_{\alpha \beta}^\gamma(
						\widetilde p,
						q
					)
				)
		\) for
		\(
			\widetilde p
			= \bigl(
				(t_j)_*
				\circ \delta
				\circ t_i^*
			\bigr)(p)
			\colon P_\alpha(\R_{t_j s}^{\Ind})
		\) if \(\alpha\) and \(\beta\) are not anti-parallel.

	\end{itemize}
	These morphisms glue together to
	\(
		t_i^*
		\colon \stlin_{G, T, \Phi}(
			\R^{((t_i s)^\infty)}
		)
		\to \stlin_{G, T, \Phi}(\R^{(s^\infty)})
	\) giving the presentation from the statement. The last relation from the statement follows from the listed relations between
	\(
		x_\alpha^i
	\) by lemmas \ref{gen-rel}(3, 4) and \ref{elim-sur}. On the other hand, the morphisms
	\(
		x_\alpha^i
	\) glue together to the homomorphisms
	\(
		x_\alpha
		\colon P_\alpha(\R^{(s^\infty)})
		\to \stlin_{G, T, \Phi}(\R^{(s^\infty)})
	\) by lemma \ref{ring-cosheaf}. These homomorphisms satisfy the commutator formula since every commutator term
	\[
		\bigl[
			x_\alpha\bigl(
				\sum_i^\cdot
					t_i^*(p_i)
			\bigr),
			x_\beta\bigl(
				\sum_j^\cdot
					t_j^*(q_j)
			\bigr)
		\bigr]
	\]
	may be easily rewritten as a product of root elements with the roots from
	\(
		\{i \alpha + j \beta \in \Phi \mid i, j > 0\}
	\) if \(\alpha\) and \(\beta\) are not anti-parallel.

	Now we prove the first claim. Let
	\(
		\delta_X
		\colon X
		\to \gstlin_{G, T, \Phi}(\R_s^{\Ind})
	\) be a crossed module and
	\(
		f_i
		\colon \stlin_{G, T, \Phi}(
			\R^{((t_i s)^\infty)}
		)
		\to X
	\) be homomorphisms satisfying
	\(
		f_i \circ t_j^*
		= f_j \circ t_i^*
	\). By the (already proved) second claim there exists a unique homomorphism
	\(
		g
		\colon \stlin_{G, T, \Phi}(\R^{(s^\infty)})
		\to X
	\) such that
	\(
		f_i = g \circ t_i^*
	\) and
	\(
		\delta_X \circ g = \delta
	\). It remains to check that \(g\) is
	\(
		\gstlin_{G, T, \Phi}(\R_s^{\Ind})
	\)-equivariant. This follows from lemma \ref{gen-rel}(3) applied to the generating family
	\(
		t_i^*
	\) of
	\(
		\stlin_{G, T, \Phi}(\R^{(s^\infty)})
	\).
\end{proof}

\subsection{Construction of the action}

Recall that the abstract group
\(
	G(K_s)
\) is canonically isomorphic to
\[
	\ev_K(G(\R_s^{\Ind}))
	\cong \Ind(\mathbf P_K)\bigl(
		\Spec(K),
		G(\R_s^{\Ind})
	\bigr).
\]
We consider it as a \textit{discrete} group object in
\(
	\Ind(\mathbf P_K) \subseteq \mathbf U_K
\), i.e. as the coproduct of its one-element subsets. Let also
\(
	t_* \colon G(K_s) \to G(K_{t s})
\) be the canonical homomorphisms. The same remarks are applicable to
\(
	\gstlin_{G, T, \Phi}(K_s)
\) inside
\(
	\Ex(\Ind(\mathbf P_K))
\).

Note that the groups
\[
	G(K_{\mathfrak p})
	= \ev_{K_{\mathfrak p}}\bigl(
		G(\R_s^{\Ind})
	\bigr)
	= \varinjlim_t
		G(K_{t s}),
\quad
	\gstlin_{G, T, \Phi}(K_{\mathfrak p})
	= \ev_{K_{\mathfrak p}}\bigl(
		\gstlin_{G, T, \Phi}(\R_s^{\Ind})
	\bigr)
	= \varinjlim_t
		\gstlin_{G, T, \Phi}(K_{t s})
\]
are isomorphic by lemma \ref{gst-local} for every prime ideal
\(
	s \notin \mathfrak p \leqt K
\).

\begin{lemma} \label{k2-central}
	The kernel of
	\(
		\stmap
		\colon \gstlin_{G, T, \Phi}(\R_s^{\Ind})
		\to G(\R_s^{\Ind})
	\) trivially acts on
	\(
		\stlin_{G, T, \Phi}(\R^{(s^\infty)})
	\), i.e. the action of
	\(
		\gstlin_{G, T, \Phi}(\R_s^{\Ind})
	\) on the co-local Steinberg group from lemma \ref{gst-action} factors through the unique action of
	\(
		\Image(\stmap)
	\).
\end{lemma}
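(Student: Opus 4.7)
Since the action of $\gstlin_{G,T,\Phi}(\R_s^{\Ind})$ on $\stlin_{G,T,\Phi}(\R^{(s^\infty)})$ is by group automorphisms (lemma \ref{gst-action}) and the target is perfect by lemma \ref{st-perf}, lemma \ref{gen-rel}(3) applied to the commutator morphism $[{-}, {=}]$ reduces the claim to showing $\up g {x_\beta(p)} = x_\beta(p)$ for every $g \colon \gstlin_{G,T,\Phi}(\R_s^{\Ind})$ with $\stmap(g) = 1$, every root $\beta \in \Phi$, and every $p \colon P_\beta(\R^{(s^\infty)})$. Using lemma \ref{elim-sur} we may furthermore restrict $\beta$ to lie outside any chosen $1$-small subspace of $\mathbb R \Phi$, which is convenient for avoiding anti-parallel cases in the commutator formulas.

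To attack this, I would use the cosheaf property (proposition \ref{st-cosheaf}) together with the extra-naturality on $s$ from lemma \ref{gst-action}. For any $t_1, \ldots, t_n \in K$ with $\sum K_s t_i = K_s$, every generator $x_\beta(p)$ is a product of elements of the form $t_i^*(x_\beta(p_i))$ with $p_i \colon P_\beta(\R^{((t_i s)^\infty)})$, and extra-naturality gives $\up g {t_i^*(y)} = t_i^*\bigl(\up{(t_i)_*(g)} y\bigr)$, where $\stmap((t_i)_*(g)) = (t_i)_*(\stmap(g)) = 1$. This reduces the problem from $(K, s)$ to each $(K, t_i s)$ while preserving the kernel condition, and by iteration permits us to work over arbitrarily fine Zariski covers of $\Spec(K_s)$, in particular over covers where $G_{K_{t_i s}}$ carries enough globally defined root data for direct calculation.

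The main obstacle is to turn the set-theoretic injectivity provided by lemma \ref{gst-local} into an internal triviality of the action morphism inside $\mathbf U_K$. On the generating morphisms $d(h)$ and $x_\alpha(q)$ of $\gstlin_{G,T,\Phi}(\R_s^{\Ind})$, the explicit action formulas $\up{d(h)}{x_\beta(p)} = x_\beta(\up h p)$ and $\up{x_\alpha(q)}{x_\beta(p)} = x_\beta(p) \prod_\gamma x_\gamma\bigl(f_{\alpha,\beta}^\gamma(q, p)\bigr)$ (for non-anti-parallel $\alpha, \beta$) depend only on the $\stmap$-images $h \colon L(\R_s^{\Ind})$ and $t_\alpha(q) \colon U_\alpha(\R_s^{\Ind})$, so the action factors through $\stmap$ at the level of generators. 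To extend the factorization to arbitrary elements of $\gstlin_{G,T,\Phi}(\R_s^{\Ind})$, I would apply lemma \ref{gen-rel}(3, 4) combined with the observation that the Weyl-triple relations built into the definition of $\gstlin$ are precisely those needed, as in the Gauss-decomposition argument of lemma \ref{gst-local}, to account for the kernel of $\stmap$. Pasting this with the cosheaf reduction above then yields the desired triviality.
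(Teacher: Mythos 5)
Your proposal has a genuine gap, and it also misstates the reduction it starts from. Applying lemma \ref{gen-rel}(3) to the commutator morphism (i.e.\ using perfectness, lemma \ref{st-perf}) reduces the claim to showing that conjugation by a kernel element fixes all \emph{commutators} \([x,y]\) of \(\stlin_{G,T,\Phi}(\R^{(s^\infty)})\), not the root generators \(x_\beta(p)\); reducing to the root generators uses only the defining presentation, not perfectness. That slip matters strategically, because for the target you chose no verification is ever supplied. The kernel of \(\stmap\colon\gstlin_{G,T,\Phi}(\R_s^{\Ind})\to G(\R_s^{\Ind})\) is an internal subobject of \(\mathbf U_K\) with no presentation by generators, so the remark that the action formulas for the generators \(d(h)\) and \(x_\alpha(q)\) ``depend only on their \(\stmap\)-images'' is vacuous: the whole issue is a product of generators whose \emph{product} maps to \(1\), and nothing in lemma \ref{gen-rel}(3,4) lets you restrict an action morphism to a kernel subobject. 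Your appeal to lemma \ref{gst-local} cannot close this either: that lemma gives injectivity of \(\stmap\) only set-theoretically over a \emph{local} base, whereas the cosheaf/extra-naturality reduction (which is a correct, if unnecessary, reduction from \((K,s)\) to the \((K,t_is)\)) only replaces \(K_s\) by other non-local rings \(K_{t_is}\); internally the kernel of \(\stmap\) over \(\R_{t_is}^{\Ind}\) stays nontrivial even when \(G_{K_{t_is}}\) has a global pinning, so no ``direct calculation'' becomes available and the claim that the Weyl-triple relations ``account for the kernel'' is exactly the statement to be proved, not an input.

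The paper's argument runs in the opposite direction and is purely formal, using ingredients you already listed. Since \(\stlin_{G,T,\Phi}(\R^{(s^\infty)})\) is perfect, commutators in it are determined by their \(\stmap\)-images through the lifted commutator \(\langle{-},{=}\rangle\) of lemma \ref{perf-x-mod}(2), and lemma \ref{gst-action} gives \(\stmap(\up g x)=\up{\stmap(g)}{\stmap(x)}\). Hence for \(g\) in the kernel
\[
\up g{[x,y]}
=[\up g x,\up g y]
=\bigl\langle\up{\stmap(g)}{\stmap(x)},\up{\stmap(g)}{\stmap(y)}\bigr\rangle
=\langle\stmap(x),\stmap(y)\rangle
=[x,y],
\]
and lemma \ref{gen-rel}(3), applied to the generating commutator morphism, yields triviality of the action. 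In short: the correct use of perfectness is to reduce to commutators, which are visibly fixed because they only depend on \(\stmap\)-images, rather than to root generators, about which the kernel gives you no handle; no localization or Gauss-decomposition input is needed.
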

\begin{proof}
	Note that
	\[
		\up g {[x, y]}
		= [\up g x, \up g y]
		= \langle
			\up{\stmap(g)}{\stmap(x)},
			\up{\stmap(g)}{\stmap(y)}
		\rangle
		= \langle \stmap(x), \stmap(y) \rangle
		= [x, y]
	\]
	for
	\(
		x, y
		\colon \stlin_{G, T, \Phi}(\R^{(s^\infty)})
	\) and
	\(
		g
		\colon \Ker(
			\stmap
			\colon \gstlin_{G, T, \Phi}(\R_s^{\Ind})
			\to G(\R_s^{\Ind})
		)
	\) by lemmas \ref{perf-x-mod}(2) and \ref{gst-action}. This implies the claim by lemmas \ref{gen-rel}(3) and \ref{st-perf}.
\end{proof}

\begin{lemma} \label{point-action}
	There is a unique action of
	\(
		G(K_s)
	\) on
	\(
		\stlin_{G, T, \Phi}(\R^{(s^\infty)})
	\) by automorphisms such that this action is extra-natural on \(s\) and induces the action of
	\(
		\gstlin_{G, T, \Phi}(K_s)
	\) from lemma \ref{gst-action}. Moreover, this action is preserved under base changes and
	\(
		\stmap
		\colon \stlin_{G, T, \Phi}(\R^{(s^\infty)})
		\to G(\R^{(s^\infty)})
	\) is
	\(
		G(K_s)
	\)-equivariant.
\end{lemma}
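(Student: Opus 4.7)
The plan is to build the action element-wise on the discrete group object \(G(K_s) \cong \bigsqcup_{g \in G(K_s)} \{g\}\), by lifting each \(g\) locally on \(\Spec(K_s)\) to the extended Steinberg group and gluing the resulting local actions via the cosheaf property from proposition \ref{st-cosheaf}. So fix \(g \in G(K_s)\). For every prime \(\mathfrak p \leqt K\) with \(s \notin \mathfrak p\), lemma \ref{gst-local} (applied to the local ring \(K_{\mathfrak p}\) with the induced isotropic pinning of rank \(\geq 3\)) says that \(\stmap \colon \gstlin_{G, T, \Phi}(K_{\mathfrak p}) \to G(K_{\mathfrak p})\) is a bijection, so the image of \(g\) in \(G(K_{\mathfrak p})\) has a unique lift \(\tilde g_{\mathfrak p}\). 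Since \(\gstlin_{G, T, \Phi}(K_{\mathfrak p}) = \varinjlim_t \gstlin_{G, T, \Phi}(K_{ts})\), this lift already lives in some \(\gstlin_{G, T, \Phi}(K_{t_{\mathfrak p} s})\) with \(t_{\mathfrak p} \notin \mathfrak p\), and quasi-compactness of \(\Spec(K_s)\) produces finitely many \(t_1, \ldots, t_n \in K\) generating the unit ideal in \(K_s\) together with lifts \(\tilde g_i \in \gstlin_{G, T, \Phi}(K_{t_i s})\) of \((t_i)_*(g)\).

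Each \(\tilde g_i\) defines, through the \(\gstlin_{G, T, \Phi}(\R_{t_i s}^{\Ind})\)-action from lemma \ref{gst-action} (with \(s\) replaced by \(t_i s\)), an automorphism \(\phi_{g, i}\) of \(\stlin_{G, T, \Phi}(\R^{((t_i s)^\infty)})\). The compatibility I must check in order to glue via the cosheaf is
\[
	t_i^* \circ \phi_{g, i} \circ t_j^*
	= t_j^* \circ \phi_{g, j} \circ t_i^*
	\colon \stlin_{G, T, \Phi}(\R^{((t_i t_j s)^\infty)})
	\to \stlin_{G, T, \Phi}(\R^{(s^\infty)}).
\]
By extra-naturality of the \(\gstlin\)-action, \(\phi_{g, i}(t_j^*(x)) = t_j^*(\up{(t_j)_*(\tilde g_i)}{x})\) and the mirror identity holds, so using \(t_i^* \circ t_j^* = t_j^* \circ t_i^*\) the check reduces to showing that \((t_j)_*(\tilde g_i)\) and \((t_i)_*(\tilde g_j)\) act identically on \(\stlin_{G, T, \Phi}(\R^{((t_i t_j s)^\infty)})\). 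Both elements share the \(\stmap\)-image \((t_i t_j)_*(g)\), hence differ by an element in the kernel of \(\stmap\), and lemma \ref{k2-central} collapses their actions. I expect this overlap check to be the main obstacle; once it is in place, proposition \ref{st-cosheaf} assembles the morphisms \(t_i^* \circ \phi_{g, i}\) into a single endomorphism \(\phi_g\) of \(\stlin_{G, T, \Phi}(\R^{(s^\infty)})\), invertible via the same construction applied to \(g^{-1}\).

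It remains to verify that this prescription is independent of the choices and respects all the required compatibilities. Independence of the cover and of the lifts \(\tilde g_i\) follows by passing to a common refinement and again invoking lemma \ref{k2-central}. Multiplicativity \(\phi_{gh} = \phi_g \circ \phi_h\) is seen on a common refinement where \(\tilde g_i \tilde h_i\) lifts \((gh)_i\); the assignment \(g \mapsto \phi_g\) thus yields the desired morphism from the discrete group object \(G(K_s)\) into the automorphism object of \(\stlin_{G, T, \Phi}(\R^{(s^\infty)})\). Compatibility with the \(\gstlin_{G, T, \Phi}(K_s)\)-action from lemma \ref{gst-action} uses the trivial cover \(t_1 = 1\), taking \(\tilde g\) itself as the lift when \(g = \stmap(\tilde g)\). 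Extra-naturality on \(s\) and preservation under base change are immediate from the construction via universal properties, and \(G(K_s)\)-equivariance of \(\stmap\) is inherited from the corresponding equivariance in lemma \ref{gst-action}. Finally, uniqueness of the action follows from lemmas \ref{perf-x-mod}(3) and \ref{st-perf}: any two such actions coincide on \(\Image(\gstlin_{G, T, \Phi}(K_s) \to G(K_s))\), hence locally on every cover by the construction above, and the cosheaf description of \(\stlin_{G, T, \Phi}(\R^{(s^\infty)})\) forces them to agree globally.
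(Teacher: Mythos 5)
Your overall strategy is the same as the paper's: cover \(\mathcal D(s)\) by principal opens on which \(g\) lifts to the extended Steinberg group (lemma \ref{gst-local} over \(K_{\mathfrak p}\) plus a direct limit and quasi-compactness argument), let the lifts \(\widetilde g_i\) act through lemma \ref{gst-action}, and glue using proposition \ref{st-cosheaf}. Your handling of the overlap condition is in fact a slight simplification of the paper's: the paper refines the cover further so that the two localized lifts become literally equal (again via lemma \ref{gst-local} and a limit argument), whereas you note they have the same \(\stmap\)-image and invoke lemma \ref{k2-central}; that step of yours is sound.

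The gap is in the gluing itself. You assert that once \(t_i^*\circ\phi_{g,i}\circ t_j^* = t_j^*\circ\phi_{g,j}\circ t_i^*\) is checked, proposition \ref{st-cosheaf} ``assembles'' the maps \(t_i^*\circ\phi_{g,i}\) into an endomorphism. If you mean the colimit-of-crossed-modules form of that proposition, it does not apply: \(t_i^*\circ\phi_{g,i}\) is not a morphism of crossed modules over \(\gstlin_{G,T,\Phi}(\R_s^{\Ind})\), since \(\delta\circ\phi_{g,i}\) equals \(\delta\) followed by conjugation by \(\widetilde g_i\), not \(\delta\), and these maps are not \(\gstlin\)-equivariant. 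If you mean the generators-and-relations form, then the assignment \(t_i^*(h)\mapsto t_i^*(\up{\widetilde g_i}h)\) must be shown to respect all three listed relations, and the third one, \(\up{t_i^*(h)}{t_j^*(h')} = t_j^*(\up{\widetilde h}{h'})\) with \(\widetilde h = ((t_j)_*\circ\delta\circ t_i^*)(h)\), is not implied by your overlap check. Its preservation amounts to showing that \(((t_j)_*\circ\delta\circ t_i^*)(\up{\widetilde g_i}h)\) and \(\up{\widetilde g_j}{\bigl(((t_j)_*\circ\delta\circ t_i^*)(h)\bigr)}\) act identically, which the paper establishes by a chain of identities computing their common image under \(\stmap\) (using \(\stmap\)-equivariance, extra-naturality, and the defining property of the \(\widetilde g_i\)) and then applying lemmas \ref{gst-action} and \ref{k2-central}. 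This verification is the bulk of the well-definedness argument and is absent from your proposal; the remaining points (independence of choices, multiplicativity and invertibility, \(\stmap\)-equivariance checked on generators via lemma \ref{gen-rel}(3), and uniqueness from extra-naturality together with the fact that the \(t_i^*\) generate) are essentially as in the paper.
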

\begin{proof}
	Let us construct the endomorphism
	\(
		A_g
	\) of
	\(
		\stlin_{G, T, \Psi}(\R^{(s^\infty)})
	\) corresponding to a fixed
	\(
		g \in G(K_s)
	\). Choose elements
	\(
		t_1, \ldots, t_n \in K
	\) generating the unit ideal of
	\(
		K_s
	\) and elements
	\[
		\widetilde g_i
		\in \gstlin_{G, T, \Phi}(K_{t_i s})
		= \ev_K(\gstlin_{G, T, \Phi}(
			\R_{t_i s}^{\Ind})
		)
	\]
	such that
	\(
		\stmap(\widetilde g_i)
		= (t_i)_*(g)
		\in G(K_{t_i s})
	\), where
	\(
		(t_i)_* \colon G(K_s) \to G(K_{t_i s})
	\) is the localization homomorphism. To find such elements recall that for every prime ideal
	\(
		s \notin \mathfrak p
	\) the image of \(g\) in
	\(
		G(K_{\mathfrak p})
		= \varinjlim_{t \notin \mathfrak p}
			G(K_{t s})
	\) may be lifted to an element
	\[
		\widetilde g_{\mathfrak p}
		\in \gstlin_{G, T, \Phi}(K_{\mathfrak p})
		= \varinjlim_{t \notin \mathfrak p}
			\gstlin_{G, T, \Phi}(K_{t s})
	\]
	by the Gauss decomposition. Next, the direct limit formulae show that such
	\(
		\widetilde g_t
	\) also exists for a principal open subset
	\(
		\mathfrak p
		\in \mathcal D(t s)
		\subseteq \mathcal D(s)
	\) for some
	\(
		t \notin \mathfrak p
	\). Finitely many such subsets cover
	\(
		\mathcal D(s)
	\) since the latter is quasi-compact.

	We define
	\(
		A_g
	\) on the generators of
	\(
		\stlin_{G, T, \Phi}(\R^{(s^\infty)})
	\) from proposition \ref{st-cosheaf} by the formula
	\[
		A_g\bigl(t_i^*(h)\bigr)
		= t_i^*(\up {\widetilde g_i} h)
	\]
	for
	\(
		h
		\colon \stlin_{G, T, \Phi}(
			\R^{((t_i s)^\infty)}
		)
	\), where
	\(
		\widetilde g_i
	\) acts as a global element of
	\(
		\gstlin_{G, T, \Phi}(K_s^{\Ind})
	\) by lemma \ref{gst-action}. Clearly, such
	\(
		A_g
	\) preserves the first relation
	\(
		t_i^*(h h') = t_i^*(h)\, t_i^*(h')
	\).

	For all \(i\), \(j\) we may further choose elements
	\(
		r_1, \ldots, r_m \in K
	\) generating the unit ideal of
	\(
		K_{t_i t_j s}
	\) such that
	\[
		(t_j r_k)_*(\widetilde g_i)
		= (t_i r_k)_*(\widetilde g_j)
		\in \gstlin_{G, T, \Phi}(K_{r_k t_i t_j s}),
	\]
	they exist by lemma \ref{gst-local} applied to
	\(
		K_{\mathfrak p}
	\) for prime ideals
	\(
		t s \notin \mathfrak p
	\) and by the direct limit argument as above. It follows that
	\[
		A_g \circ t_i^* \circ (t_j r_k)^*
		= A_g \circ t_j^* \circ (t_i r_k)^*,
	\]
	so by proposition \ref{st-cosheaf}
	\(
		A_g
	\) preserves the second relation.

	In order to show that
	\(
		A_g
	\) preserves the last relation it suffices to check that
	\[
		\bigl(
			{\stmap}
			\circ (t_j)_*
			\circ \delta
			\circ t_i^*
		\bigr)(\up {\widetilde g_i} h)
		= \stmap\bigl
			(\up{\widetilde g_j}{\bigl(
				\bigl(
					(t_j)_*
					\circ \delta
					\circ t_i^*
				\bigr)(h)
			\bigr)}
		\bigr)
		\colon G(K_{s t_j}^{\Ind})
	\]
	for
	\(
		h
		\colon \stlin_{G, T, \Phi}(
			K^{((s t_i)^\infty)}
		)
	\) by lemmas \ref{gst-action} and \ref{k2-central}. We have
	\begin{align*}
		\stmap\bigl(
			\up{\widetilde g_j}{\bigl(
				\bigl(
					(t_j)_*
					\circ \delta
					\circ t_i^*
				\bigr)(h)
			\bigr)}
		\bigr)
		&= (t_j)_*(g)\,
		\bigl(
			{\stmap}
			\circ (t_j)_*
			\circ \delta
			\circ t_i^*
		\bigr)(h)\,
		(t_j)_*(g)^{-1}
	\\
		&= (t_j)_*(g)\,
		\bigl(
			(t_j)_*
			\circ \delta
			\circ t_i^*
			\circ {\stmap}
		\bigr)(h)\,
		(t_j)_*(g)^{-1}
	\\
		&= (t_j)_*\bigl(
			\up g {\bigl(
				(\delta \circ t_i^* \circ {\stmap})(h)
			\bigr)}
		\bigr)
	\\
		&= \bigl(
			(t_j)_*
			\circ \delta
			\circ t_i^*
		\bigr)\bigl(\up{(t_i)_*(g)}{\stmap(h)}\bigr)
	\\
		&= \bigl(
			{\stmap}
			\circ (t_j)_*
			\circ \delta
			\circ t_i^*
		\bigr)(\up {\widetilde g_i} h).
	\end{align*}

	The constructed
	\(
		A_g
	\) is uniquely determined by the requirements from the statement. It is independent on the choices since for any
	\(
		t' \in K
	\) and
	\(
		\widetilde g'
		\in \gstlin_{G, T, \Phi}(K_{t' s})
	\) lifting
	\(
		t'_*(g)
	\) we may consider the extended family
	\(
		t_1, \ldots, t_n, t'
	\) and by the proved results
	\[
		A_g\bigl({t'}^*(h)\bigr)
		= {t'}^*(\up {\widetilde g'} h).
	\]
	It is easy to see that
	\(
		A_g
	\) is preserved under base changes and is extra-natural on \(s\). Moreover,
	\(
		\stmap(A_g(h)) = \up g {\stmap(h)}
	\) by lemma \ref{gen-rel}(3) and since for any \(i\) we have
	\[
		\stmap\bigl(
			A_g\bigl(t_i^*(h)\bigr)
		\bigr)
		= \stmap\bigl(
			t_i^*(\up {\widetilde g_i} h)
		\bigr)
		= t_i^*\bigl(
			\up{(t_i)_*(g)}{\stmap(h)}
		\bigr)
		= \up g {
			\stmap\bigl(t_i^*(h)\bigr)
		}.
	\]

	Finally, by the uniqueness
	\(
		A_{g g'} = A_g A_{g'}
	\) and
	\(
		A_1 = \id
	\). It follows that
	\(
		A_g
	\) are automorphisms.
\end{proof}

\begin{prop} \label{local-action}
	The homomorphism
	\[
		\delta \circ \stmap
		\colon \stlin_{G, T, \Phi}(\R^{(s^\infty)})
		\to G(\R_s^{\Ind})
	\]
	is a crossed product in a unique way. The action of
	\(
		G(\R_s^{\Ind})
	\) on
	\(
		\stlin_{G, T, \Phi}(\R^{(s^\infty)})
	\) is preserved under base changes and is extra-natural on \(s\). Moreover, this action induces the action of
	\(
		\gstlin_{G, T, \Phi}(\R_s^{\Ind})
	\) from lemma \ref{gst-action} and the action of
	\(
		G(K_s)
	\) from lemma \ref{point-action}. The homomorphism
	\[
		\stmap
		\colon \stlin_{G, T, \Phi}(\R^{(s^\infty)})
		\to G(\R^{(s^\infty)})
	\]
	is
	\(
		G(\R_s^{\Ind})
	\)-equivariant.
\end{prop}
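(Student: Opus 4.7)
The uniqueness of the crossed module structure on $\delta \circ \stmap$ is a consequence of lemmas~\ref{perf-x-mod}(3,~4) and~\ref{st-perf}: since $\stlin_{G, T, \Phi}(\R^{(s^\infty)})$ is perfect, any two $G(\R_s^{\Ind})$-actions compatible with $\delta \circ \stmap$ must coincide. In particular, once existence is established, lemma~\ref{perf-x-mod}(3) forces the resulting action to restrict to the ones of lemmas~\ref{gst-action} and~\ref{point-action} under the natural homomorphisms $\gstlin_{G, T, \Phi}(\R_s^{\Ind}) \to G(\R_s^{\Ind})$ and $G(K_s) \to G(\R_s^{\Ind})$, and the $G(\R_s^{\Ind})$-equivariance of $\stmap$ then follows from the fact that the conjugation action of $G(\R_s^{\Ind})$ on $G(\R^{(s^\infty)})$ already makes $\delta$ a crossed module.

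Existence is the substantive content and is to be proved by the universal element method. By lemma~\ref{fp-change}, constructing the action morphism
\[
A \colon G(\R_s^{\Ind}) \times \stlin_{G, T, \Phi}(\R^{(s^\infty)}) \to \stlin_{G, T, \Phi}(\R^{(s^\infty)})
\]
is equivalent, after base change to the coordinate ring of the scheme $G_{K_s}$, to giving an automorphism of the base-changed Steinberg group realizing the action of the tautological universal element $g_{\univ}$ of $G$. While $g_{\univ}$ does not lift globally to $\gstlin_{G, T, \Phi}(\R_s^{\Ind})$, it does so after Zariski localization: the Gauss decomposition identifies the big open cell $\Omega = U_{-\Pi} L U_\Pi \subseteq G_{K_s}$ with the product $U_{-\Pi} \times L \times U_\Pi$, and (using the Weyl elements afforded by the isotropic pinning, exactly as in the proof of lemma~\ref{gst-local}) the translates $\{w \cdot \Omega\}$ form a Zariski open cover of $G_{K_s}$. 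On each such open there is a canonical section $\sigma_w$ of $\stmap$ into $\gstlin_{G, T, \Phi}(\R_s^{\Ind})$, obtained by sending the three Gauss-decomposition factors to the corresponding generators of $\gstlin$ and multiplying by a chosen lift $\widetilde w$ of the Weyl element $w$.

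Composing $\sigma_w$ with the $\gstlin$-action of lemma~\ref{gst-action} produces a partial automorphism $A_w$ of $\stlin_{G, T, \Phi}(\R^{(s^\infty)})$ over $w \cdot \Omega$. On an overlap any two such sections $\sigma_w, \sigma_{w'}$ lift the same element of $G$, so they differ by a morphism into $\Ker(\stmap \colon \gstlin \to G)$; by lemma~\ref{k2-central} this kernel acts trivially, whence $A_w$ and $A_{w'}$ agree on overlaps. Gluing the $A_w$ using Zariski descent for morphisms out of $G(\R_s^{\Ind})$ assembles the global morphism $A$. That $A$ makes $\delta \circ \stmap$ a crossed module, is preserved under base change, is extra-natural in $s$, and induces the actions of lemmas~\ref{gst-action} and~\ref{point-action} then follows from the corresponding properties of the $\gstlin$-action combined with the cosheaf presentation of proposition~\ref{st-cosheaf}: each identity need only be tested on the generating morphisms $t_i^*$, where $A$ is given locally by $A(\widetilde g_i, t_i^*(h)) = t_i^*(\widetilde g_i \cdot h)$ for a local lift $\widetilde g_i$, and this proceeds as in the proof of lemma~\ref{point-action} via lemma~\ref{gen-rel}(3,~4).

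The main obstacle is to make rigorous the Zariski-style gluing of the local sections $\sigma_w$ inside the ambient category $\mathbf U_K$: one must check that the open cover $\{w \cdot \Omega\}$ of the representable scheme $G_{K_s}$ yields a genuine colimit presentation of $G(\R_s^{\Ind})$ in $\Ind(\mathbf P_K)$ so that the $A_w$ assemble into a morphism in $\mathbf U_K$ rather than merely a compatible family of partial morphisms. Once this descent step is in place, lemma~\ref{k2-central} supplies overlap compatibility and lemma~\ref{gen-rel} handles the remaining algebraic verifications.
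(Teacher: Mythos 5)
Your uniqueness and equivariance reductions (perfectness plus lemma \ref{perf-x-mod}) match the paper, and many ingredients you cite (lemmas \ref{fp-change}, \ref{gst-action}, \ref{k2-central}, \ref{gen-rel}, proposition \ref{st-cosheaf}) are the right ones; but the existence step is routed through a gluing that is not available, and this is a genuine gap, not a technicality. First, everything lives in $\mathbf U_K = \Ex(\Ind(\Pro(\mathbf P_K)))$, which is built from \emph{presheaves}, so there is no Zariski descent for morphisms out of $G(\R_s^{\Ind})$: the union of the subobjects $(w\Omega)(\R_s^{\Ind}) \subseteq G(\R_s^{\Ind})$ is computed pointwise, and an $R_s$-point of $G$ need not lie in any single translate of the big cell (it only does so Zariski-locally on $\Spec(R_s)$ --- already for $\glin_2$ and $R = K \times K$ this fails), so your compatible family $\{A_w\}$ defines a morphism only on a proper subobject; the \v{C}ech diagram of the cover does not present $G(\R_s^{\Ind})$ in $\Ind(\mathbf P_K)$. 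You flag this as the ``main obstacle'', but it is false in this framework rather than merely unverified. Second, even scheme-theoretically the cover you invoke need not exist: for non-split locally isotropic $G$ (e.g.\ $\slin_4(D)$ with $D$ a quaternion division algebra) the translates of the \emph{relative} big cell $U_{-\Pi} L\, U_{\Pi}$ by relative Weyl elements do not cover $G_{K_s}$, since over a residue field splitting the anisotropic kernel one finds points whose relevant leading blocks are singular for every such translate; lemma \ref{gst-local} and the Gauss decomposition give surjectivity of $\stmap$ on points over \emph{local} rings, not an open cover of the group scheme. Finally, the domain of the action is the ind-object $G(\R_s^{\Ind}) = \varinjlim_N G(\frac{\R}{s^N})$, not the representable scheme $G_{K_s}$, so ``base change to $K_s[G]$ and act by the tautological element'' does not by itself produce a morphism with the correct source.

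The paper's proof avoids both problems by localizing in the base instead of in the group variable. Lemma \ref{point-action} constructs, for each abstract point $g \in G(R_s)$ over every finitely presented base change $R$, an automorphism $A_g$ by lifting $g$ to $\gstlin_{G, T, \Phi}(K_{t_i s})$ only after inverting finitely many $t_i$ covering $\mathcal D(s)$, and glues these using the cosheaf presentation of $\stlin_{G, T, \Phi}(\R^{(s^\infty)})$ from proposition \ref{st-cosheaf}, with lemmas \ref{gst-local} and \ref{k2-central} providing well-definedness on overlaps. Proposition \ref{local-action} then applies the universal element method to the affine pieces $G(\frac{\R}{s^N}) \cong \Spec(R_{s, N})$ of the ind-object: lemma \ref{fp-change} converts the base-changed point actions into morphisms $\Spec(R_{s, N}) \times \stlin_{G, T, \Phi}(\R^{(s^\infty)}) \to \stlin_{G, T, \Phi}(\R^{(s^\infty)})$ over $K$, the direct limit over $N$ gives the action of $G(\R_s^{\Ind})$, and perfectness (lemma \ref{perf-x-mod}(3)) yields the crossed-module identity, uniqueness, base-change invariance and compatibility with lemmas \ref{gst-action} and \ref{point-action}. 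So the repair of your sketch is to replace the big-cell cover of $G_{K_s}$ by the cover of the base together with the cosheaf property --- that is, by lemma \ref{point-action} --- and to apply your universal-element reduction to the rings $R_{s, N}$ rather than to $K_s[G]$.
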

\begin{proof}
	By lemmas \ref{fp-change} and \ref{point-action} there are morphisms
	\[
		A_{s, u}
		\colon \Spec(R)
			\times \stlin_{G, T, \Phi}(\R^{(s^\infty)})
		\to \stlin_{G, T, \Phi}(\R^{(s^\infty)})
	\]
	for all
	\(
		R \in \Ring_K^\fp
	\) and morphisms
	\(
		u \colon \Spec(R) \to G(\R_s^{\Ind})
	\) such that
	\begin{itemize}

		\item
		\(
			A_{s, u \circ f}(p, x) = A_{s, u}(f(p), x)
		\) for any morphisms
		\(
			\Spec(S)
			\xrightarrow f \Spec(R)
			\xrightarrow u G(\R_s^{\Ind})
		\);

		\item
		\(
			A_{s, u}(p, x y)
			= A_{s, u}(p, x)\, A_{s, u}(p, y)
		\);

		\item
		\(
			A_{s, e}(*, x) = x
		\), where
		\(
			e \colon \Spec(K) \to G(\R_s^{\Ind})
		\) is the unit global element;

		\item
		\(
			A_{s, u}(p, A_{s, v}(q, x))
			= A_{s, m \circ (u \times v)}((p, q), x)
		\), where
		\(
			m
			\colon G(\R_s^{\Ind})
				\times G(\R_s^{\Ind})
			\to G(\R_s^{\Ind})
		\) is the multiplication;

		\item
		\(
			A_{s, {\stmap} \circ u}(p, x)
			= \up {u(p)} x
		\) for
		\(
			u
			\colon \Spec(R)
			\to \gstlin_{G, T, \Phi}(\R_s^{\Ind})
		\);

		\item
		\(
			\stmap(A_{s, u}(p, x))
			= \up{u(p)}{\stmap(x)}
		\);

		\item
		\(
			A_{s, u}\bigl(p, t^*(x)\bigr)
			= t^*\bigl(
				A_{t s, t_* \circ u}(p, x)
			\bigr)
		\).

	\end{itemize}

	Recall that
	\[
		G
		\cong \Spec\bigl(
			K[x_1, \ldots, x_n]
			/ (f_1, \ldots, f_m)
		\bigr)
	\]
	is a affine group scheme of finite presentation. For any
	\(
		N \geq 0
	\) take sufficiently large
	\(
		M \gg N
	\) such that
	\(
		f_j
		\colon (\frac \R {s^N})^n
		\to \frac \R {s^M}
	\) are well-defined for all \(s\) simultaneously. Consider objects
	\(
		G(\frac \R {s^N}) \in \mathbf P_K
	\), they are subobjects of
	\(
		(\frac \R {s^N})^n
	\) defined by the equations
	\(
		f_j(\vec x) = 0
	\). These objects actually depend on the choice of \(M\), but we only need that they are functorial on \(s\) and
	\(
		G(\R_s^{\Ind})
		= \varinjlim_N
			G(\frac \R {s^N})
	\). All of them are representable, i.e.
	\(
		G(\frac \R {s^N}) \cong \Spec(R_{s, N})
	\) for some
	\(
		R_{s, N} \in \Ring_K^\fp
	\). Also, the group operation on \(G\) induces morphisms
	\(
		G(\frac \R {s^N})
			\times G(\frac \R {s^N})
		\to G(\frac \R {s^L})
	\) for some
	\(
		L \gg N
	\) depending on \(N\), as well as the identity elements
	\(
		1 \in G(\frac \R {s^N})
	\). Similarly, we may construct the objects
	\[\textstyle
		d
		\colon L(\frac \R {s^N})
		\to G(\frac \R {s^T}),
	\quad
		t_\alpha
		\colon P_\alpha(\frac \R {s^N})
		\to G(\frac \R {s^T})
	\]
	together with the canonical morphisms for
	\(
		T \gg N
	\).

	Now we have the morphisms
	\[
		A_{s, N}
		= A_{s, \Spec(R_{s, N}) \to G(\R_s^{\Ind})}
		\colon \Spec(R_{s, N})
			\times \stlin_{G, T, \Phi}(\R^{(s^\infty)})
		\to \stlin_{G, T, \Phi}(\R^{(s^\infty)})
	\]
	such that
	\begin{itemize}

		\item
		\(
			A_{s, N}(g, x)
			= A_{s, N + 1}(\iota_N(g), x)
		\), where
		\(
			\iota_N
			\colon \Spec(R_{s, N})
			\to \Spec(R_{s, N + 1})
		\) are the canonical morphisms;

		\item
		\(
			A_{s, N}(g, x y)
			= A_{s, N}(g, x)\, A_{s, N}(g, y)
		\);

		\item
		\(
			A_{s, N}(1, x) = x
		\);

		\item
		\(
			A_{s, N}(g, A_{s, N}(h, x))
			= A_{s, L}(g h, x)
		\);

		\item
		\(
			A_{s, T}\bigl(
				d(g)\,
				\prod_{i = 1}^n
					t_{\alpha_i}(p_i),
				x
			\bigr)
			= \up {
				d(g)\,
				\prod_{i = 1}^n
					x_{\alpha_i}(p_i)
			} x
		\) for all
		\(
			n \geq 0
		\) and roots
		\(
			\alpha_1, \ldots, \alpha_n \in \Phi
		\), where
		\(
			g \colon L(\frac \R {s^N})
		\) and
		\(
			p_i \colon P_{\alpha_i}(\frac \R {s^N})
		\);

		\item
		\(
			\stmap(A_{s, N}(g, x)) = \up g {\stmap(x)}
		\);

		\item
		\(
			A_{s, N}
		\) are extra-natural on \(s\).

	\end{itemize}

	Taking direct limits, we finally get an action of
	\(
		G(\R_s^{\Ind})
	\) on
	\(
		\stlin_{G, T, \Phi}(\R^{(s^\infty)})
	\) by automorphisms such that it induces the action of
	\(
		\gstlin_{G, T, \Phi}(\R_s^{\Ind})
	\) from lemma \ref{gst-action}, it is extra-natural on \(s\), and
	\(
		\stmap
		\colon \stlin_{G, T, \Phi}(\R^{(s^\infty)})
		\to G(\R^{(s^\infty)})
	\) is
	\(
		G(\R_s^{\Ind})
	\)-equivariant. Since any morphism
	\(
		\Spec(K) \to G(\R_s^{\Ind})
	\) factors through some
	\(
		\Spec(R_{s, N})
	\), the constructed action also induces the action from lemma \ref{point-action}.

	Let us check that
	\(
		\delta \circ {\stmap}
		\colon \stlin_{G, T, \Phi}(\R^{(s^\infty)})
		\to G(\R_s^{\Ind})
	\) is a crossed module. Indeed, both \(\delta\) and \(\stmap\) are
	\(
		G(\R_s^{\Ind})
	\)-equivariant. Also,
	\[
		\up x y
		= \up {\delta(x)} y
		= \up {\stmap(\delta(x))} y
		= \up {\delta(\stmap(x))} y
	\]
	for
	\(
		x, y
		\colon \stlin_{G, T, \Phi}(\R^{(s^\infty)})
	\).

	Finally, the action is unique and is preserved under base changes by lemma \ref{perf-x-mod}(3).
\end{proof}

\section{Locally isotropic Steinberg groups}

Recall that
\(
	\delta \colon G(\R^{(s^\infty)}) \to G(\R_s^{\Ind})
\) is a crossed module. Since it is extra-natural on \(s\), the homomorphism
\[
	s^*
	\colon G(\R^{(s^\infty)})
	\to G(\R)
\]
is also a crossed module, where the action of
\(
	G(\R)
\) on
\(
	G(\R^{(s^\infty)})
\) is the pullback of the action of
\(
	G(\R_s^{\Ind})
\). The properties of the diagram from the previous section (i.e. proposition \ref{local-action}) imply that both
\[
	\stmap
	\colon \stlin_{G, T, \Phi}(\R^{(s^\infty)})
	\to G(\R^{(s^\infty)}),
\quad
	s^* \circ {\stmap}
	\colon \stlin_{G, T, \Phi}(\R^{(s^\infty)})
	\to G(\R)
\]
are also crossed modules, where the actions are induced from the action of
\(
	G(\R_s^{\Ind})
\).

\begin{lemma} \label{st-unique}
	Let
	\(
		(T, \Phi)
	\) and
	\(
		(T', \Phi')
	\) be two isotropic pinnings of
	\(
		G_{K_s}
	\) of rank at least \(3\). Then there is a unique isomorphism
	\[
		F
		\colon \stlin_{G, T', \Phi'}(\R^{(s^\infty)})
		\to \stlin_{G, T, \Phi}(\R^{(s^\infty)})
	\]
	of crossed modules over
	\(
		G(\R_s^{\Ind})
	\). This isomorphism commutes with the homomorphisms
	\(
		t^*
	\), is preserved under base changes, and satisfies
	\(
		{\stmap} \circ F
		= \stmap'
		\colon \stlin_{G, T', \Phi'}(\R^{(s^\infty)})
		\to G(\R^{(s^\infty)})
	\).
\end{lemma}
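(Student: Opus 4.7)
Both Steinberg groups are perfect by Lemma~\ref{st-perf} and become crossed modules over \(G(\R_s^{\Ind})\) via Proposition~\ref{local-action}. Any homomorphism of crossed modules \(F\) satisfying \(\stmap \circ F = \stmap'\) is then unique and automatically \(G(\R_s^{\Ind})\)-equivariant by Lemma~\ref{perf-x-mod}(4); compatibility of \(F\) with the \(t^*\) maps and with base change will follow by applying this same uniqueness to the various induced candidate morphisms in place of \(F\).

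\textbf{Refinement case.} I would first construct \(F\) under the extra hypothesis that both \((T, \Phi)\) and \((T', \Phi')\) are contained in a common isotropic pinning \((T'', \Phi'')\) of \(G_{K_s}\) of rank at least \(3\). For each \(\alpha \in \Phi\) the product decomposition \(U_\alpha = \prod_{\beta \colon \Image(\beta) \in \mathbb N_+ \alpha} U_\beta\) of the root subgroup (from the axioms of isotropic pinnings) gives, via \(t_\alpha\) and \(t_\beta\), a scheme isomorphism \(P_\alpha \cong \prod P_\beta\) and canonical inclusions \(\iota_\beta \colon P_\beta \hookrightarrow P_{\Image(\beta)}\). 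I would then construct mutually inverse homomorphisms
\[
\Pi \colon \stlin_{G, T'', \Phi''}(\R^{(s^\infty)}) \to \stlin_{G, T, \Phi}(\R^{(s^\infty)}),\quad x_\beta(q) \mapsto x_{\Image(\beta)}(\iota_\beta(q)),
\]
and \(\Sigma\) in the opposite direction sending \(x_\alpha(p)\) to the ordered product \(\prod_\beta x_\beta(p_\beta)\) for the decomposition \(p = \prod p_\beta\). Verification of the defining relations rests on the fact that the structure constants \(f_{\alpha, \beta}^\gamma\) of \((T, \Phi)\) are obtained by regrouping those of \((T'', \Phi'')\) along fibres of \(\Image\), and that any commutator between generators indexed by \(\beta, \beta'\) with \(\Image(\beta) = \Image(\beta') = \alpha\) lies in the positive cone over \(\alpha\), hence inside the image of \(U_\alpha\). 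Doing the same construction for \((T', \Phi') \subseteq (T'', \Phi'')\) and composing the two refinement isomorphisms yields the desired \(F\); compatibility with \(\stmap\) is automatic.

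\textbf{General case and gluing.} In general, at each prime \(s \notin \mathfrak p \leqt K\) both pinnings embed into maximal isotropic pinnings over \(K_\mathfrak p\), and any two maximal isotropic pinnings over a local ring are \(G(K_\mathfrak p)\)-conjugate. Using \(K_\mathfrak p = \varinjlim_{t \notin \mathfrak p} K_{t s}\) and quasi-compactness of \(\Spec(K_s)\), there exist \(t_1, \ldots, t_n \in K\) with \(\sum_i K_s t_i = K_s\), elements \(g_i \in G(K_{t_i s})\), and isotropic pinnings \((T''_i, \Phi''_i)\) of \(G_{K_{t_i s}}\) of rank \(\geq 3\) containing both \(\up{g_i}(T, \Phi)|_{K_{t_i s}}\) and \((T', \Phi')|_{K_{t_i s}}\). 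Over each \(K_{t_i s}\), composing the refinement isomorphism of the previous step with conjugation by \(g_i\) (via the \(G(\R_{t_i s}^{\Ind})\)-action from Proposition~\ref{local-action}) yields an isomorphism \(F_i\) on \(\R^{((t_i s)^\infty)}\). Uniqueness forces the \(F_i\) to agree on the overlaps \(\R^{((t_i t_j s)^\infty)}\), and Proposition~\ref{st-cosheaf} then glues them into the required global \(F\); naturality in \(t^*\) and base change again follow from uniqueness.

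\textbf{Main obstacle.} The hardest step will be the refinement case, where the Chevalley-type commutator relations of \(\stlin_{G, T'', \Phi''}\) must be shown to descend correctly along the surjection \(u \colon \mathbb Z^{\ell''} \to \mathbb Z^\ell\) of character lattices to those of \(\stlin_{G, T, \Phi}\); the ultrashort \(\mathsf{BC}_\ell\) cases, where \(P_\alpha\) carries a nontrivial \(2\)-step nilpotent structure and \(P_{2\alpha} \leq P_\alpha\), need additional care. Applying Proposition~\ref{elim-bij} inside \(\stlin_{G, T'', \Phi''}\) with respect to a suitable \(2\)-small subspace of \(\mathbb R \Phi''\) (essentially the kernel of \(u\), perturbed to avoid entire components) should reduce the verification to a bounded family of roots where the identifications are directly visible.
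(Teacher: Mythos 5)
Your proposal follows essentially the same route as the paper: uniqueness and base-change invariance from Lemma \ref{perf-x-mod}(4) together with perfectness, the containment/refinement case handled by regrouping root subgroups along \(u\) and eliminating the roots killed by \(u\) via Proposition \ref{elim-bij}, the conjugation case via the \(G(K_s)\)-action, and the general case by passing to maximal isotropic pinnings over the local rings \(K_{\mathfrak p}\), spreading out over a principal open cover, and gluing with Proposition \ref{st-cosheaf}. The only adjustment is that the elimination step must precede your map \(\Pi\) (which as written is undefined on generators \(x_\beta\) with \(\Image(\beta)=0\)), exactly as you anticipate in your final paragraph, and this is precisely how the paper phrases that case.
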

\begin{proof}
	The required isomorphism is unique by lemma \ref{perf-x-mod}(4), and by the same argument is is preserved under base changes. Such an isomorphism clearly exists if
	\(
		(T, \Phi) = \up g {(T', \Phi')}
	\) for some
	\(
		g \in G(K_s)
	\), namely, it is given by
	\[
		F(x'_\alpha(p)) = x_{\up g \alpha}(\up g p)^g,
	\]
	where
	\(
		\up g {({-})}
		\colon P'_\alpha
		\to P_{\up g \alpha}
	\) is the homomorphism such that
	\(
		\up g {t'_\alpha(p)}
		= t_{\up g \alpha}(\up g p)
	\), and the outer automorphism
	\(
		({-})^g = \up{g^{-1}}({-})
	\) is taken from lemma \ref{point-action}. Note that \(F\) preserves the Steinberg relations since they are preserved by
	\(
		{\stmap} \circ F = \stmap'
	\), and \(F\) preserves the action of
	\(
		G(\R_s^{\Ind})
	\) by lemma \ref{perf-x-mod}(3). Here we denote the root morphisms associated with
	\(
		(T', \Phi')
	\) by
	\[
		t'_\alpha \colon P'_\alpha \to G_{K_s},
	\quad
		x'_\alpha
		\colon P'_\alpha(\R^{(s^\infty)})
		\to \stlin_{G, T, \Phi}(\R^{(s^\infty)})
	\]
	to distinguish them from the corresponding morphisms associated with
	\(
		(T, \Phi)
	\).

	Now consider the case
	\(
		(T, \Phi) \subseteq (T', \Phi')
	\). There are ``inclusion'' homomorphisms
	\(
		\iota_\alpha
		\colon P'_\alpha
		\to P_{\Image(\alpha)}
	\) such that
	\[
		t'_\alpha(p)
		= t_{\Image(\alpha)}(\iota_\alpha(p))
		\colon P_\alpha
		\to G_{K_s}
	\]
	for
	\(
		\alpha \in \Phi'
	\) with non-zero image in \(\Phi\). By proposition \ref{elim-bij} there is a unique isomorphism \(F\) such that
	\(
		F(x'_\alpha(p))
		= x_{\Image(\alpha)}(\iota_\alpha(p))
	\) if the image of \(\alpha\) is non-zero, it is again
	\(
		G(\R_s^{\Ind})
	\)-equivariant by lemma \ref{perf-x-mod}(3).

	In these two cases
	\[
		F \circ t^*
		= t^* \circ F
		\colon \stlin_{G, T', \Phi'}(
			\R^{((t s)^\infty)}
		)
		\to \stlin_{G, T, \Phi}(\R^{(s^\infty)}).
	\]
	for all
	\(
		t \in K
	\) by construction.

	Finally, consider the general case. Choose elements
	\(
		t_1, \ldots, t_n \in K
	\) generating the unit ideal of
	\(
		K_s
	\), isotropic pinnings
	\(
		(T_i, \Phi_i)
	\) of
	\(
		G_{K_{t_i s}}
	\), and elements
	\(
		g_i, g'_i \in G(K_{t_i s})
	\) such that
	\[
		(T_{t_i}, \Phi)
		\subseteq \up{g_i}{(T_i, \Phi_i)},
	\quad
		(T'_{t_i}, \Phi')
		\subseteq \up{g'_i}{(T_i, \Phi_i)}.
	\]
	These object may be constructed as follows. For every
	\(
		s \notin \mathfrak p \in \Spec(K)
	\) choose a maximal isotropic pinning of
	\(
		G_{\mathfrak p}
	\), its conjugates contain both
	\(
		(T_{\mathfrak p}, \Phi)
	\) and
	\(
		(T'_{\mathfrak p}, \Phi')
	\). Then extend these isotropic pinnings and conjugating elements to an open neighborhood
	\(
		\mathfrak p
		\in \mathcal D(t_i)
		\subseteq \Spec(K)
	\) by a direct limit argument. Finally, take finitely many
	\(
		t_i
	\) covering
	\(
		\mathcal D(s)
	\) by its quasi-compactness.

	We already know that
	\(
		\stlin_{G, T, \Phi}(\R^{(r t_i s)})
		\cong \stlin_{G, T_i, \Phi_i}(\R^{(r t_i s)})
		\cong \stlin_{G, T', \Phi'}(\R^{(r t_i s)})
	\) as crossed modules over
	\(
		G(\R_{r t_i s}^{\Ind})
	\) for all \(r\) and these isomorphisms commute with
	\(
		r^*
	\). By proposition \ref{st-cosheaf} we get the required isomorphism
	\[
		F
		\colon \stlin_{G, T', \Phi'}(\R^{(r s)})
		\to \stlin_{G, T, \Phi}(\R^{(r s)})
	\]
	of group objects for all \(r\) such that
	\(
		{\stmap} \circ F = \stmap'
	\) and
	\(
		F \circ {r'}^* = {r'}^* \circ F
	\). Again \(F\) is
	\(
		G(\R_s^{\Ind})
	\)-equivariant by lemma \ref{perf-x-mod}(3).
\end{proof}

We are ready to construct the Steinberg group
\(
	\stlin_G(\R)
\) as an group object in
\(
	\mathbf U_K
\).

\begin{theorem} \label{global-st}
	Let \(K\) be a unital ring and \(G\) be a reductive group scheme over \(K\) of local isotropic rank at least \(3\). Then there exists a functor
	\(
		s \mapsto \stlin_G(\R^{(s^\infty)})
	\) from the partially ordered set of principal open subsets of
	\(
		\Spec(K)
	\) to the category of group objects in
	\(
		\mathbf U_K
	\) together with a homomorphism
	\[
		\stmap
		\colon \stlin_G(\R^{(s^\infty)})
		\to G(\R^{(s^\infty)})
	\]
	natural on \(s\) and with a crossed
	\(
		G(\R_s^{\Ind})
	\)-module structure of on
	\(
		\stlin_G(\R^{(s^\infty)})
	\) extra-natural on \(s\). These data have the following properties.
	\begin{itemize}

		\item
		\(\stmap\) is
		\(
			G(\R_s^{\Ind})
		\)-equivariant.

		\item
		\(
			\stlin_G(\R^{(s^\infty)})
		\) is perfect.

		\item
		The groups
		\(
			\stlin_G(\R^{(s^\infty)})
		\) have the cosheaf property, i.e. the canonical homomorphism
		\[
			\colim\bigl(
				\stlin_G(
					\R^{((r_i r_j s)^\infty)}
				)
				\rightrightarrows \stlin_G(
					\R^{((r_i s)^\infty)}
				)
			\bigr)
			\to \stlin_G(\R^{(s^\infty)})
		\]
		of crossed modules over
		\(
			G(\R_s^{\Ind})
		\) (or just over
		\(
			G(\R)
		\)) is an isomorphism for any sequence
		\(
			t_1, \ldots, t_n \in K
		\) such that
		\(
			\sum_i K_s t = K_s
		\). Moreover,
		\[
			[t_i]^*
			\colon \stlin_G(\R^{((t_i s)^\infty)})
			\to \stlin_G(\R^{(s^\infty)})
		\]
		generate the latter group object and the only relations between them are listed in proposition \ref{st-cosheaf}.

		\item
		If
		\(
			(T, \Phi)
		\) is an isotropic pinning of
		\(
			G_{K_s}
		\) of rank at least \(3\), then there exists an isomorphism
		\[
			F
			\colon \stlin_{G, T, \Phi}(\R^{(s^\infty)})
			\cong \stlin_G(\R^{(s^\infty)})
		\]
		of group objects such that
		\(
			{\stmap} \circ F = \stmap
		\). Such isomorphism is unique and
		\(
			G(\R_s^{\Ind})
		\)-equivariant (by lemma \ref{perf-x-mod}(4)), it is also natural on \(s\).

		\item
		The functor
		\(
			s \to \stlin_G(\R^{(s^\infty)})
		\) with a natural homomorphism \(\stmap\) and an extra-natural crossed module structure satisfying all the above properties is unique up to a unique natural isomorphism.

		\item
		For any unital \(K\)-algebra \(R\) there is a unique isomorphism
		\[
			\stlin_G(\R^{(s^\infty)})_R
			\to \stlin_{G_R}(\R_R^{(s^\infty)})
		\]
		of group objects over
		\(
			G_R(\R_R^{(s^\infty)})
		\), it is
		\(
			G_R((\R_R)_s^{\Ind})
		\)-equivariant and natural on \(s\). It also commutes with the isomorphisms \(F\).

	\end{itemize}
\end{theorem}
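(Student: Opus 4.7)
The plan is to define $\stlin_G(\R^{(s^\infty)})$ by gluing the pinned Steinberg groups along a cover by principal opens on which $G$ already admits an isotropic pinning of rank $\geq 3$, and then to transfer all structure (the map $\stmap$, the action of $G(\R_s^{\Ind})$, perfectness, the cosheaf property) from the local pieces. First, I would establish the basic covering fact: since the local isotropic rank of $G$ is at least $3$, for every $s \in K$ and every prime $\mathfrak p \in \mathcal D(s)$ there exists $t \in K \setminus \mathfrak p$ such that $G_{K_{t s}}$ admits an isotropic pinning of rank $\geq 3$ (by passing from a maximal pinning over $K_{\mathfrak p}$ to an open neighborhood via the standard direct-limit argument used already in the proof of lemma~\ref{st-unique}). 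By quasi-compactness, $\mathcal D(s)$ is covered by finitely many such $\mathcal D(t_i s)$ with chosen pinnings $(T_i, \Phi_i)$ of $G_{K_{t_i s}}$.

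Given such a cover, I would define
\[
	\stlin_G(\R^{(s^\infty)})
	:= \colim\bigl(
		\textstyle\coprod_{i,j}
			\stlin_{G, T_i, \Phi_i}(
				\R^{((t_i t_j s)^\infty)}
			)
		\rightrightarrows
		\coprod_i \stlin_{G, T_i, \Phi_i}(
			\R^{((t_i s)^\infty)}
		)
	\bigr),
\]
where the arrows use $(t_j)^*$ on the source index and on the target index the canonical isomorphism of lemma~\ref{st-unique} translating the pinning $(T_i, \Phi_i)$ into $(T_j, \Phi_j)$ over $K_{t_i t_j s}$. When $G_{K_s}$ itself admits a pinning $(T, \Phi)$ of rank $\geq 3$, proposition~\ref{st-cosheaf} identifies this colimit with $\stlin_{G, T, \Phi}(\R^{(s^\infty)})$, which also supplies the required isomorphism $F$ in property~(4). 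Independence from the choice of cover and pinnings is proved by the usual common-refinement argument: any two covers admit a common refinement on whose pieces the transition isomorphisms from lemma~\ref{st-unique} satisfy the cocycle condition because each such isomorphism is already uniquely characterized by lemma~\ref{perf-x-mod}(4).

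The functoriality $t^*$ is then induced by the obvious map of covers $(t_i s) \rightsquigarrow (t t_i s)$. The homomorphism $\stmap$ into $G(\R^{(s^\infty)})$ and the crossed $G(\R_s^{\Ind})$-module structure are inherited from the pieces: each $\stlin_{G, T_i, \Phi_i}(\R^{((t_i s)^\infty)})$ carries such structure by proposition~\ref{local-action}, and the identifications on double overlaps commute with $\stmap$ and with the $G(\R_s^{\Ind})$-action by the uniqueness in lemma~\ref{perf-x-mod}(4) and lemma~\ref{st-unique}. Hence the colimit inherits $\stmap$ together with its $G(\R_s^{\Ind})$-equivariance, and inherits the action making it a crossed module over $G(\R_s^{\Ind})$ and over $G(\R)$. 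Perfectness is preserved by colimits of group objects in an infinitary pretopos (indeed, the generating morphism $[{-},{=}]$ descends), so lemma~\ref{st-perf} gives property~(2). The cosheaf property~(3) is a formal consequence of the definition together with proposition~\ref{st-cosheaf}: any cover $\{r_k\}$ of $\mathcal D(s)$ may be refined by a cover $\{r_k t_i\}$ on which pinnings exist, and taking the iterated colimit in two ways yields the same object.

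Uniqueness in property~(5) follows from lemma~\ref{perf-x-mod}(4) combined with the cosheaf property: any other such functor is already pinned-isomorphic on the $(t_i s)$ and the isomorphism is unique, hence descends to a unique isomorphism on $s$. Base change compatibility in property~(6) is inherited step by step: the pinned Steinberg groups $\stlin_{G, T, \Phi}(\R^{(s^\infty)})$ are preserved by $({-})_R$ (stated after their definition), lemma~\ref{st-unique} gives base-change compatibility of the pinning-change isomorphisms, and colimits commute with the geometric functor $({-})_R$, so the defining colimit is preserved. The main obstacle I anticipate is the coherence bookkeeping on triple overlaps $\mathcal D(t_i t_j t_k s)$ needed to check that the transition data form an honest cocycle and to verify that the $G(\R_s^{\Ind})$-action and $\stmap$ glue consistently; however, each individual coherence is forced by a uniqueness statement (lemma~\ref{perf-x-mod}(4) or lemma~\ref{st-unique}), so no nontrivial computation is required.
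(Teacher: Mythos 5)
Your proposal is correct and follows essentially the same route as the paper: glue the pinned co-local Steinberg groups over a Zariski cover on which isotropic pinnings of rank at least \(3\) exist, use proposition \ref{st-cosheaf} for the gluing and the identification with \(\stlin_{G,T,\Phi}(\R^{(s^\infty)})\) when a global pinning exists, and let lemmas \ref{st-unique} and \ref{perf-x-mod}(4) force all coherences, with \(\stmap\), the crossed-module structure, perfectness (lemma \ref{st-perf}), and base change inherited from the pieces. The only notable difference is bookkeeping: the paper fixes one cover \(t_1,\dots,t_n\) of \(\Spec(K)\) independent of \(s\) and takes the colimit of the already pinning-independent groups \(\stlin_G(\R^{((t_i s)^\infty)})\) directly in the category of crossed modules over \(G(\R_s^{\Ind})\), which makes functoriality in \(s\) and the crossed-module structure immediate, whereas your \(s\)-dependent \v{C}ech-style cover with explicit transition isomorphisms requires the cover-independence and cocycle checks you describe (all indeed forced by the uniqueness statements).
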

\begin{proof}
	The uniqueness and the last property follow from lemma \ref{perf-x-mod}(4). We already have the groups
	\(
		\stlin_G(\R^{(s^\infty)})
		\cong \stlin_{G, T, \Phi}(\R^{(s^\infty)})
	\) for such \(s\) that there exists an isotropic pinning
	\(
		(T, \Phi)
	\) of
	\(
		G_{K_s}
	\) of rank at least \(3\), they are independent on the choice of
	\(
		(T, \Phi)
	\) by lemma \ref{st-unique}.

	Now choose
	\(
		t_1, \ldots, t_n \in K
	\) generating the unit ideal such that
	\(
		G_{K_{t_i}}
	\) have isotropic pinnings of rank at least \(3\). Let
	\(
		\stlin_G(\R^{(s^\infty)})
	\) be the colimit
	\[
		\colim\bigl(
			\stlin_G(
				\R^{((t_i t_j s)^\infty)}
			)
			\rightrightarrows \stlin_G(
				\R^{((t_i s)^\infty)}
			)
		\bigr)
	\]
	in the category of crossed modules over
	\(
		G(\R_s^{\Ind})
	\). Clearly, this object is functorial on \(s\), there exists the required
	\(
		G(\R_s^{\Ind})
	\)-equivariant homomorphism
	\[
		\stmap
		\colon \stlin_G(\R^{(s^\infty)})
		\to G(\R^{(s^\infty)}),
	\]
	and the cosheaf property holds by proposition \ref{st-cosheaf}. The same proposition implies that this construction is a continuation of the already defined
	\(
		\stlin_G(\R^{(s^\infty)})
	\) for \(s\) with isotropic pinnings on
	\(
		G_{K_s}
	\) of rank at least \(3\). Finally,
	\(
		\stlin_G(\R^{(s^\infty)})
	\) is perfect since it is generated as a group object by homomorphisms
	\[
		[t_i]^*
		\colon \stlin_G(\R^{((t_i s)^\infty)})
		\to \stlin_G(\R^{(s^\infty)})
	\]
	from perfect group objects.
\end{proof}

Now suppose that
\(
	\stlin_G(\R) \in \Ex(\Ind(\mathbf P_K))
\) up to an isomorphism. This condition clearly holds if \(G\) has an isotropic pinning of rank at least \(3\) since in this case
\(
	\stlin_G(\R) = \stlin_{G, T, \Phi}(\R)
\) is given just by generators and relations. Also, this condition is preserved under base changes. Under this condition the abstract Steinberg group functor
\[
	\stlin_G \colon \Ring_K \to \Group,\,
	R \mapsto \ev_R(\stlin_G(\R))
\]
is well-defined.

\begin{theorem} \label{abstr-xmod}
	Let \(K\) be a unital ring and \(G\) be a reductive group scheme over \(K\) of local isotropic rank at least \(3\). Suppose that
	\(
		\stlin_G(\R) \in \Ex(\Ind(\Set))
	\) up to an isomorphism, e.g. that \(G\) has an isotropic pinning of rank at least \(3\). Then the Steinberg group functor
	\(
		\stlin_G
	\) commutes with direct limits. The groups
	\(
		\stlin_G(R)
	\) are perfect and the homomorphisms
	\(
		\stmap \colon \stlin_G(R) \to G(R)
	\) are crossed modules in a unique way, so the action of
	\(
		G(R)
	\) on
	\(
		\stlin_G(R)
	\) is natural on \(R\).
\end{theorem}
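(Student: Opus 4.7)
The plan is to derive everything routinely from theorem \ref{global-st} by pushing it through geometric evaluation functors; under the standing hypothesis the object \(\stlin_G(\R)\) lives in the subcategory \(\Ex(\Ind(\mathbf P_K))\), where such evaluation functors land in \(\Set\) rather than in pro-completions. First I would apply the universal geometric functor \(\ev\colon \Ex(\Ind(\mathbf P_K)) \to \mathbf P_K\) — which exists by the universal property of \(\Ex \circ \Ind\), since \(\mathbf P_K\) is an infinitary pretopos — to the group object \(\stlin_G(\R)\) and its homomorphism to \(G\). This produces a group object \(\stlin_G \in \mathbf P_K\) with a homomorphism \(\stmap\colon \stlin_G \to G\). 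Since \(\mathbf P_K = \Cat(\Ring_K^\fp, \Set)\) and \(\Ring_K \cong \Ind(\Ring_K^\fp)\), any object of \(\mathbf P_K\) extends uniquely to a functor \(\Ring_K \to \Set\) preserving filtered colimits, so \(\stlin_G\colon \Ring_K \to \Group\) commutes with direct limits. This gives the first claim.

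For perfectness, the crossed module structure, and naturality, I would use that for each unital \(K\)-algebra \(R\) the functor \(\ev_R\colon \Ex(\Ind(\mathbf P_K)) \to \Set\) is geometric (this was recorded in the diagram following the definition of \(\mathbf U_K\)). Geometric functors preserve finite products, regular epimorphisms and monomorphisms, hence they preserve group objects, homomorphisms, actions, crossed module structures, and perfectness — the last because \(X\) being perfect is exactly that the commutator morphism \(X \times X \to X\) is a regular epimorphism, which in \(\Set\) is a surjection. Applying \(\ev_R\) to the perfect crossed module \(\stlin_G(\R) \to G(\R)\) of theorem \ref{global-st} therefore yields a perfect group \(\stlin_G(R)\) together with a \(G(R)\)-equivariant crossed module structure on \(\stmap\colon \stlin_G(R) \to G(R)\). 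Naturality of the action in \(R\) is automatic from functoriality of \(\ev_R\).

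Uniqueness of the crossed module structure is then immediate from lemma \ref{perf-x-mod}(4) applied in the infinitary pretopos \(\Set\): since \(\stlin_G(R)\) is perfect, any crossed \(G(R)\)-module structure on \(\stmap\) is determined by \(\stmap\) itself. I do not expect any substantial obstacle in this theorem, because all the genuine content — construction of \(\stlin_G(\R)\), its perfectness, and the uniqueness of the action at the level of \(\mathbf U_K\) — has already been handled by theorem \ref{global-st} and proposition \ref{local-action}. The only real role of the hypothesis \(\stlin_G(\R) \in \Ex(\Ind(\mathbf P_K))\) is to guarantee that the evaluation \(\ev_R\) lands in \(\Set\) (not in a pro-completion of \(\Set\)); this hypothesis holds, for instance, whenever \(G\) itself has an isotropic pinning of rank at least \(3\), in which case \(\stlin_G(\R) = \stlin_{G, T, \Phi}(\R)\) is manifestly given inside \(\Ex(\Ind(\mathbf P_K))\) by generators and relations.
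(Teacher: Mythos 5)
Your proposal is correct and follows essentially the same route as the paper, whose entire proof is the remark that the claims follow from the properties of \(\ev_R\) and theorem \ref{global-st}; your fleshing-out via the geometricity of \(\ev_R\) (so it preserves group objects, homomorphisms, actions and crossed-module identities) and the identification of \(\mathbf P_K\) with direct-limit-preserving functors \(\Ring_K \to \Set\) is exactly what is intended. Two small corrections: in this paper ``perfect'' means \emph{generated by} the commutator morphism, not that \([{-},{=}]\colon X\times X\to X\) is a regular epimorphism (the latter would say every element is a single commutator, which is not what holds for these Steinberg groups); preservation under \(\ev_R\) still goes through because a geometric functor preserves finite products, images and small unions, hence the subgroup generated by the image of the commutator morphism. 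Also, uniqueness of the crossed-module action on the perfect group \(\stlin_G(R)\) is lemma \ref{perf-x-mod}(3), not \ref{perf-x-mod}(4), the latter being the statement about uniqueness of homomorphisms over \(G\).
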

\begin{proof}
	This easily follows from the properties of
	\(
		\ev_R
	\) and theorem \ref{global-st}.
\end{proof}

\bibliographystyle{plain}
\bibliography{references}

\end{document}